\begin{document}

\begin{frontmatter}



\title{Ground state solutions for the nonlinear fractional Schr\"{o}dinger-Poisson system}


\author{Kaimin Teng}

\address{Department of Mathematics, Taiyuan University of Technology,\\
Taiyuan, Shanxi 030024, P. R. China\\
E-Mail: tengkaimin2013@163.com}

\begin{abstract}
In this paper, we study the existence of ground state solutions for the nonlinear fractional Schr\"{o}dinger-Poisson system
\begin{equation*}
\left\{
  \begin{array}{ll}
    (-\Delta)^su+V(x)u+\phi u=|u|^{p-1}u, & \hbox{in $\mathbb{R}^3$,} \\
    (-\Delta)^s\phi=u^2,& \hbox{in $\mathbb{R}^3$,}
  \end{array}
\right.
\end{equation*}
where $2<p<2_s^{\ast}-1=\frac{3+2s}{3-2s}$, $s\in(\frac{3}{4},1)$. Under certain assumptions on $V$, a nontrivial ground state solution $(u,\phi)$ is established through using a monotonicity trick and global compactness Lemma. As its supplementary results, we prove some nonexistence results in the case of $1<p\leq 2$ and $p=2_s^{\ast}-1$.
\end{abstract}

\begin{keyword}
Fractional Schr\"{o}dinger-Poisson system\sep Pohozaev identity\sep Concentration-compactness \sep Ground state solution
\MSC[2008] 34C25 \sep 58E30\sep 47H04

\end{keyword}

\end{frontmatter}

\newtheorem{theorem}{Theorem}[section]
\newtheorem{lemma}[theorem]{Lemma}
\newtheorem{proposition}[theorem]{Proposition}
\newtheorem{definition}[theorem]{Definition}
\newtheorem{remark}[theorem]{Remark}
\newtheorem{corollary}[theorem]{Corollary}
\newtheorem{acknowledgements}[theorem]{Acknowledgements}



\section{Introduction}
In this paper we consider the following fractional Schr\"{o}dinger-Poisson system
\begin{equation}\label{main}
\left\{
  \begin{array}{ll}
    (-\Delta)^su+V(x)u+\phi u=|u|^{p-1}u, & \hbox{in $\mathbb{R}^3$,} \\
    (-\Delta)^s\phi=u^2,& \hbox{in $\mathbb{R}^3$,}
  \end{array}
\right.
\end{equation}
where $2<p<2_s^{\ast}-1=\frac{3+2s}{3-2s}$, $s\in(\frac{3}{4},1)$. We assume that the potential $V(x)$ satisfies the following conditions:\\
$(V_1)$ $V(x)\in C^1(\mathbb{R}^3,\mathbb{R})$, $(x,\nabla V(x))\in L^{\infty}(\mathbb{R}^3)\cup L^{\frac{2_s^{\ast}}{2_s^{\ast}-2}}(\mathbb{R}^3)$ and
\begin{equation*}
2sV(x)+(x,\nabla V(x))\geq0,\quad\,\,x\in\mathbb{R}^3,
\end{equation*}
where $(\cdot,\cdot)$ is the usual inner product in $\mathbb{R}^3$.\\
$(V_2)$ $V(x)\leq\liminf\limits_{|x|\rightarrow+\infty}V(x)=V_{\infty}\in\mathbb{R}^{+}$ and the inequality is strict in a subset of positive Lebesgue measure.\\
$(V_3)$ there exists a constant $\alpha_0>0$ such that
\begin{equation*}
\alpha_0=\inf_{u\in H^s(\mathbb{R}^3)\backslash\{0\}}\frac{\int_{\mathbb{R}^3}|(-\Delta)^{\frac{s}{2}}u|^2+V(x)|u|^2\,{\rm d}x}{\int_{\mathbb{R}^3}|u|^2\,{\rm d}x}>0.
\end{equation*}
Here the operator $(-\Delta)^s$ is a non-local operator, and the fractional Laplacian $(-\Delta)^s$ of order $s$ can be defined by the Fourier transform  $(-\Delta)^su=\mathcal{F}^{-1}(|\xi|^{2s}\mathcal{F}u)$, $\mathcal{F}$ being the usual Fourier transform in $\mathbb{R}^3$. In recent several years, nonlinear equations or systems involving fractional operators are receiving a great attention, because its important role is playing in the real world. For instance: Fractional Quantum Mechanics (the derivation of the Fractional Schr\"{o}dinger equation given by N. Laskin in \cite{La,La1}), pseudodifferential operators appear in many problems in Physics and Chemistry \cite{MK}, obstacle problems \cite{S}, optimization and finance \cite{CT}, etc. On the other hand, it appears in the mathematical theory itself, such as conformal geometry and minimal surfaces \cite{CM}.

When $s=1$, the system \eqref{main} reduces to the following system
\begin{equation}\label{main-1}
\left\{
  \begin{array}{ll}
    -\Delta u+V(x)u+\phi(x)u=|u|^{p-1}u, & \hbox{in $\mathbb{R}^3$,} \\
    -\Delta \phi=u^2,& \hbox{in $\mathbb{R}^3$.}
  \end{array}
\right.
\end{equation}
This is called the system of Schr\"{o}dinger-Poisson equations because it consists of a Schr\"{o}dinger equation coupled with a Poisson term. It describes
systems of identical charged particles interacting each other in the case that effects of magnetic field could be ignored and
its solution represents, in particular, a standing wave for such a system. For a deduction of this system, see e.g. \cite{BF,BF1}. The existence and multiplicity of solutions had been investigated extensively by many authors in the past several years, we refer the interested readers to see \cite{AM,AP,AR,BF,BF1,Ruiz,ZZ} and the references therein. Especially, in \cite{Ruiz,AP,ZZ}, the authors provided some new ideas to deal with variational problems with local term.

When $\phi(x)=0$, $x\in\mathbb{R}^3$, system \eqref{main} reduces to the fractional Schr\"{o}dinger equation
\begin{equation}\label{main-2}
(-\Delta)^su+V(x)u=|u|^{p-1}u, \quad x\in\mathbb{R}^3,
\end{equation}
which is of particular interest in fractional quantum mechanics in the study of particles on stochastic fields modeled by L\'{e}vy processes \cite{La,La1}. Also a detailed mathematical description of \eqref{main-2} can be found in the appendix of \cite{DPDV}. In the remarkable work of Caffarelli and Silvestre \cite{CS}, the authors express this nonlocal operator $(-\Delta)^s$ as a Dirichlet-Neumann map for a certain elliptic boundary value problem with local differential operators defined on the upper half space. After this pioneered work, the equations involving fractional operators are receiving a great attention. For the equation \eqref{main-2}, its study began from \cite{FQT} and \cite{DPV} through using variational methods. The related work can be referred to see \cite{BV,CW,DMV,DPDV,FL,Sec,SZ,Teng,TLJ} and so on.

Furthermore, if $s=1$, the equation \eqref{main-2} reduces to the classical Schr\"{o}dinger equation
\begin{equation}\label{main-3}
-\Delta u+V(x)u=|u|^{p-1}u, \quad x\in\mathbb{R}^3,
\end{equation}
which was one of the main research subjects in the last several decades. There are a large number of perfect results in the documentary, here we do not try to list all of them, we only refer to see the papers \cite{BL,Lions1,Lions2,JT} and the book \cite{Willem}.

We call the system \eqref{main} the nonlinear fractional Schr\"{o}dinger-Poisson system because it also consists of a fractional Schr\"{o}dinger equation coupled with a fractional Poisson term. From the mathematical view, the study of system \eqref{main} is very interest because it appears a nonlocal operator, namely the fractional Laplacian $(-\Delta)^s$, and this will lead to some difficulties which techniques developed for local case can not be adapted immediately \cite{Sec}, comparing to the system \eqref{main-1}. For instance, the truncation argument has to handle carefully in the present situation; The kernel of the operator $-\Delta$ is of the form $\frac{1}{|x-y|}$, but the one of fractional operator $(-\Delta)^s$ is of the form $\frac{1}{|x-y|^{3-2s}}$; Moreover, when $V(x)$ is a constant, it is well known that the ground state solution of the equation \eqref{main-3} possesses an exponential decay (see \cite{BL}), but the fractional one \eqref{main-2} is only polynomial decay which was proved in \cite{FL}.

In \cite{ZDS}, the authors studied the existence of radial solutions for system \eqref{main}. In \cite{MS}, the authors proved the semiclassical solution for system \eqref{main} and the existence of infinitely many solutions was established in \cite{Wei}. To our best of our knowledge, in the literature there are few results on the existence of ground state solutions to the problem \eqref{main}, namely couples $(u,\phi)$ which solve \eqref{main} and minimize the action functional associated to \eqref{main} among all possible solutions. The aim of our paper is devoted to studying the existence of ground state solutions for system \eqref{main}. The study of ground state solutions was started at the works of Coleman, Glazer and Martin \cite{CGM} and Berestycki and Lions \cite{BL}. After them, the existence and the profile of ground state solutions have been studied for lots of problems by many authors. Here we cannot try to review the huge bibliography.

Our main result are stated as follows.
\begin{theorem}\label{thm1-1}
Suppose that $V(x)$ satisfies $(V_1)-(V_3)$, then system \eqref{main} has a ground state solution for any $2<p<2_s^{\ast}-1$.
\end{theorem}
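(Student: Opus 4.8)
The plan is to recast \eqref{main} as a single nonlocal equation and then combine a monotonicity trick with a global compactness (profile decomposition) analysis.

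\emph{Reduction and variational framework.} Since $s\in(\frac34,1)$, for every $u\in H^s(\mathbb{R}^3)$ the second equation $(-\Delta)^s\phi=u^2$ has a unique solution $\phi_u\in D^{s,2}(\mathbb{R}^3)$, namely the Riesz potential $\phi_u(x)=c_s\int_{\mathbb{R}^3}|x-y|^{-(3-2s)}u^2(y)\,{\rm d}y\ge0$, and $u\mapsto\phi_u$ is continuous with $\phi_{tu}=t^2\phi_u$; substituting $\phi=\phi_u$ in the first equation, solutions of \eqref{main} are exactly the critical points of
\begin{equation*}
I(u)=\frac12\int_{\mathbb{R}^3}\big(|(-\Delta)^{\frac s2}u|^2+V(x)u^2\big)\,{\rm d}x+\frac14\int_{\mathbb{R}^3}\phi_uu^2\,{\rm d}x-\frac1{p+1}\int_{\mathbb{R}^3}|u|^{p+1}\,{\rm d}x
\end{equation*}
on $H^s(\mathbb{R}^3)$, whose norm is, by $(V_3)$, equivalent to $\|u\|^2=\int_{\mathbb{R}^3}(|(-\Delta)^{\frac s2}u|^2+V(x)u^2)\,{\rm d}x$. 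From $(V_3)$ and the embedding $H^s(\mathbb{R}^3)\hookrightarrow L^{p+1}(\mathbb{R}^3)$ one checks that $I$ has the mountain pass geometry; but since $2<p<2_s^{\ast}-1$ the Palais--Smale condition is not at hand, both because $\mathbb{R}^3$ is unbounded and because the nonlocal term obstructs the usual compactness arguments.

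\emph{Monotonicity trick and global compactness.} To produce a bounded Palais--Smale sequence I would apply Jeanjean's monotonicity trick to the family
\begin{equation*}
I_\lambda(u)=\frac12\|u\|^2+\frac14\int_{\mathbb{R}^3}\phi_uu^2\,{\rm d}x-\frac{\lambda}{p+1}\int_{\mathbb{R}^3}|u|^{p+1}\,{\rm d}x,\qquad\lambda\in\big[\tfrac12,1\big],
\end{equation*}
which has mountain pass geometry uniformly in $\lambda$, with level $c_\lambda$ positive and non-increasing in $\lambda$; hence for almost every $\lambda$ there is a bounded $(u_n)$ with $I_\lambda(u_n)\to c_\lambda$, $I_\lambda'(u_n)\to0$. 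The heart of the argument is a global compactness lemma: up to a subsequence $u_n\rightharpoonup u_\lambda$ with $I_\lambda'(u_\lambda)=0$, and either $u_n\to u_\lambda$ strongly in $H^s$, or $u_n=u_\lambda+\sum_{j=1}^{k}w^j(\cdot-y_n^j)+o(1)$ with $|y_n^j|\to\infty$, $|y_n^j-y_n^{j'}|\to\infty$, each $w^j\ne0$ solving the problem at infinity $(-\Delta)^sw+V_\infty w+\phi_ww=\lambda|w|^{p-1}w$, and $c_\lambda=I_\lambda(u_\lambda)+\sum_{j=1}^{k}I_\lambda^\infty(w^j)$; establishing this requires, in particular, a Brezis--Lieb type decomposition of $\iint|x-y|^{-(3-2s)}u^2(x)u^2(y)\,{\rm d}x\,{\rm d}y$ and the vanishing of the cross terms between bubbles with mutually diverging centers. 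Since each $I_\lambda^\infty(w^j)\ge c_\lambda^\infty$, the mountain pass level at infinity, and since $(V_2)$ yields the \emph{strict} inequality $c_\lambda<c_\lambda^\infty$ (test the energy of the limit problem with a translate of a ground state of the problem at infinity and use $V\le V_\infty$, strictly on a set of positive measure), the splitting is impossible: $k=0$, $u_n\to u_\lambda\ne0$, and $I_\lambda'(u_\lambda)=0$. Thus $I_\lambda$ has a nontrivial critical point $u_\lambda$ at level $c_\lambda$ for a.e.\ $\lambda\in[\frac12,1]$.

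\emph{Passage to $\lambda=1$ via the Pohozaev identity.} Take $\lambda_n\uparrow1$ with critical points $u_{\lambda_n}$ as above. Each $u_{\lambda_n}$ satisfies the Pohozaev identity associated with $I_{\lambda_n}$ — this is where $(V_1)$ enters: $2sV+(x,\nabla V)\ge0$ provides the right sign in the combination of the Pohozaev and Nehari identities, while the integrability of $(x,\nabla V)$ makes the identity rigorous (after the needed regularity of $u_{\lambda_n}$ obtained by bootstrap). Combining those identities with $I_{\lambda_n}(u_{\lambda_n})=c_{\lambda_n}\le c_{1/2}$ yields a uniform bound on $\|u_{\lambda_n}\|$. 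Hence $(u_{\lambda_n})$ is a bounded Palais--Smale sequence for $I=I_1$ at level $c_1$ (using $c_{\lambda_n}\to c_1$, the left continuity of $\lambda\mapsto c_\lambda$ at $1$), and applying the global compactness lemma together with $c_1<c_1^\infty$ rules out the trivial weak limit, producing $u\ne0$ with $I'(u)=0$; then $(u,\phi_u)$ solves \eqref{main}.

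\emph{Ground state and the main difficulty.} To see that $(u,\phi_u)$ is a ground state, set $m=\inf\{I(v):v\in H^s(\mathbb{R}^3)\setminus\{0\},\,I'(v)=0\}$. The Nehari and Pohozaev identities confine every nontrivial solution to a Pohozaev--Nehari manifold and show that the energy along a suitable scaling path through such a $v$ is maximized exactly at $v$, whence $I(v)\ge c_1$; combined with $I(u)\le c_1$ this gives $I(u)=m$. The main obstacle is the global compactness step: proving the profile decomposition for the \emph{nonlocal} functional (the Poisson term does not localize and must be split by hand) and, above all, securing $c_\lambda<c_\lambda^\infty$ and $c_1<c_1^\infty$ — the sole place where the non-degeneracy in $(V_2)$ is used — which is what prevents mass from escaping to infinity; the remaining pieces (mountain pass geometry, the monotonicity trick, the Pohozaev a priori bound) are comparatively routine once the fractional Pohozaev identity afforded by $(V_1)$ is available.
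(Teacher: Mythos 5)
Your overall architecture (reduction via $\phi_u$, Jeanjean's monotonicity trick on $I_\lambda$, a global compactness/splitting lemma against the limit functional $I_\lambda^\infty$, the strict inequality $c_\lambda<c_\lambda^\infty$ from $(V_2)$, and the Pohozaev-based a priori bound from $(V_1)$ to pass to $\lambda=1$) matches the paper's proof. But your final step, which is what actually produces a \emph{ground state}, has a genuine gap. You claim that for every nontrivial critical point $v$ of $I=I_{V,1}$ the "energy along a suitable scaling path through $v$ is maximized exactly at $v$," hence $I(v)\ge c_1$ and the mountain-pass solution is automatically a least-energy solution. This fibering argument is valid only in the autonomous case: under the dilation $v_\theta(x)=\theta^{2s}v(\theta x)$ the potential term becomes $\theta^{4s-3}\int V(x/\theta)v^2\,{\rm d}x$, so for nonconstant $V$ membership in the Pohozaev--Nehari set does not imply that $\theta\mapsto I(v_\theta)$ peaks at $\theta=1$ (nothing in $(V_1)$--$(V_3)$ controls $V(x/\theta)$ along the path), and consequently $m:=\inf\{I(v):v\neq0,\ I'(v)=0\}\ge c_1$ is unjustified; indeed the paper never asserts $m=c_1$. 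The paper's route is different and avoids this: it defines $m$, shows $0<m\le c_1$ (ruling out $m=0$ via the Pohozaev identity and $(V_1)$), takes a sequence of critical points with $I(u_n)\to m$, proves its boundedness by the same Pohozaev/Nehari combination, and then reapplies the global compactness argument at the level $m\le c_1<c_1^\infty$ to show the infimum is attained. You would need to replace your last paragraph by such a minimization over the set of critical points (or otherwise prove $m\ge c_1$, which is not available here).

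Two smaller points in the compactness step: to exclude splitting you also need $I_{V,\lambda}(u_\lambda)\ge 0$ for the weak limit, which again comes from the Pohozaev identity combined with $2sV+(x,\nabla V)\ge0$ in $(V_1)$ --- without it the identity $c_\lambda=I_{V,\lambda}(u_\lambda)+\sum_j I_\lambda^\infty(w^j)$ does not contradict $c_\lambda<c_\lambda^\infty$; and the strict inequality $c_\lambda<c_\lambda^\infty$ should be obtained by comparing $I_{V,\lambda}$ and $I_\lambda^\infty$ along the whole dilation path through the (fixed, untranslated) ground state of the limit problem, using $V\le V_\infty$ with strict inequality on a set of positive measure --- translating the bubble toward infinity makes the difference of the two energies vanish rather than stay strictly negative. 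Both are fixable with the tools you already invoke, but they are used essentially in the paper's Lemmas 4.2 and 4.3.
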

\begin{remark}\label{rem1-1}
We give a unified treatment for the proof of the existence of a ground state solution to problem \eqref{main} for all $p\in(2,2_s^{\ast}-1)$. Theorem \ref{thm1-1} improves Theorem 1.4 in \cite{AP} and Theorem 1.1 in \cite{ZZ} to the nonlocal case.
\end{remark}

\begin{remark}\label{rem1-2}
There are many functions verifying the assumptions $(V_1)-(V_3)$ on $V(x)$, for example, $V(x)=2-\frac{1}{1+|x|^{2s}}$.
\end{remark}

Motivated by \cite{AP,LY,ZZ}, to prove Theorem \ref{thm1-1}, first we assume that $V(x)$ is a positive constant and we look for a minimizer of the
reduced functional restricted to a suitable manifold $\mathcal{M}$ which introduced by Ruiz in \cite{Ruiz} when $s=1$. Such a manifold is consisted of the linear combination of the Pohozaev Manifold and Nehari Manifold. It has two perfect characteristics: it is a natural constraint for the reduced functional and it contains every solution of the problem (1).
As we know, when $s=1$ and for any $3<p<5$, it can be obtained the boundedness of minimizing sequence on the Nehari manifold. It is difficult to get the boundedness of any $(PS)$ sequence when $2<p\leq3$. But on the manifold $\mathcal{M}$, we can show that any minimizing sequence is bounded for any $2<p<2_s^{\ast}$.

When $V(x)$ is not a constant, it is more difficult to the boundedness of any $(PS)$ sequence. To overcome this difficulty, we use a subtle approach developed by Jeanjean \cite{Jean}.
\begin{theorem}\label{thm1-2}
Let $X$ be a Banach space and $\Lambda\subset\mathbb{R}^{+}$ an interval. Consider a family $\varphi_{\lambda}$ of $C^1$ functionals on $X$ with the form
\begin{equation*}
\varphi_{\lambda}(u)=A(u)-\lambda B(u),\quad \forall\lambda\in\Lambda,
\end{equation*}
where $B(u)\geq0$, $\forall u\in X$, and such that either $A(u)\rightarrow+\infty$ or $B(u)\rightarrow+\infty$ as $\|u\|\rightarrow\infty$. If there exists $v_1,v_2\in X$ such that
\begin{equation*}
c_{\lambda}=\inf_{\gamma\in\Gamma}\max_{t\in[0,1]}\varphi_{\lambda}(\gamma(t))>\max\{\varphi_{\lambda}(v_1),\varphi_{\lambda}(v_2)\},\quad \forall\lambda\in\Lambda
\end{equation*}
where $\Gamma=\{\gamma\in C([0,1],X): \gamma(0)=v_1,\gamma(1)=v_2\}$.

Then for almost every $\lambda\in\Lambda$, there exists a sequence $\{v_n\}\subset X$ such that\\
$(i)$ $\{v_n\}$ is bounded;\\
$(ii)$ $\varphi_{\lambda}(v_n)\rightarrow c_{\lambda}$;\\
$(iii)$ $\varphi_{\lambda}'(v_n)\rightarrow0$ in the dual $X'$ of $X$.
\end{theorem}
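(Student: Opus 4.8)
The plan is to exploit the monotonicity of $\lambda\mapsto c_\lambda$ to produce, for almost every $\lambda$, a bounded Palais--Smale sequence at the level $c_\lambda$. First I would record the structural facts that come for free: since $B\geq 0$, the map $\lambda\mapsto\varphi_\lambda(u)$ is nonincreasing for each fixed $u$, hence $\lambda\mapsto c_\lambda$ is nonincreasing on $\Lambda$; also each $c_\lambda$ is finite, because the geometric hypothesis $c_\lambda>\max\{\varphi_\lambda(v_1),\varphi_\lambda(v_2)\}$ gives a lower bound along every path while any fixed path $\gamma$ gives a finite upper bound. A monotone real function is differentiable almost everywhere, so fix a full-measure set $\Lambda_0\subset\Lambda$ on which $c'_\lambda:=\frac{d}{d\lambda}c_\lambda$ exists, and fix $\lambda\in\Lambda_0$.

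Next I would extract the bounded sequence. Choose $\lambda_n\nearrow\lambda$ (or take a sequence $\mu_n\nearrow\lambda$ inside $\Lambda_0$; either works). For each $n$ pick a nearly optimal path $\gamma_n\in\Gamma$ for $c_{\lambda}$, i.e. $\max_{t}\varphi_\lambda(\gamma_n(t))\leq c_\lambda+(\lambda-\lambda_n)$, and also $\max_t\varphi_{\lambda_n}(\gamma_n(t))\leq c_{\lambda_n}+(\lambda-\lambda_n)$; this is possible because the two functionals are uniformly close on the compact curve $\gamma_n([0,1])$ up to an error controlled by $(\lambda-\lambda_n)\sup_{t}B(\gamma_n(t))$, and one shows $\sup_t B(\gamma_n(t))$ stays bounded using $c_\lambda-c_{\lambda_n}\geq -C(\lambda-\lambda_n)$ together with $\varphi_{\lambda_n}(\gamma_n(t))=\varphi_\lambda(\gamma_n(t))+(\lambda-\lambda_n)B(\gamma_n(t))$. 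A standard deformation/quantitative minimax argument (using that $c_\lambda>\max\{\varphi_\lambda(v_1),\varphi_\lambda(v_2)\}$ so the endpoints are not near the max level) then yields, for $n$ large, a point $v_n:=\gamma_n(t_n)$ with
\begin{equation*}
\varphi_\lambda(v_n)\to c_\lambda,\qquad \varphi_\lambda'(v_n)\to 0,\qquad \varphi_\lambda(v_n)\geq c_\lambda-(\lambda-\lambda_n).
\end{equation*}
The point of the last lower bound, combined with the upper bound $\varphi_\lambda(v_n)\leq \varphi_{\lambda_n}(v_n)\leq c_{\lambda_n}+(\lambda-\lambda_n)$, is that
\begin{equation*}
0\leq B(v_n)=\frac{\varphi_{\lambda_n}(v_n)-\varphi_\lambda(v_n)}{\lambda-\lambda_n}\leq \frac{c_{\lambda_n}-c_\lambda}{\lambda-\lambda_n}+2 \longrightarrow -c'_\lambda+2,
\end{equation*}
so $B(v_n)$ is bounded; and then $A(v_n)=\varphi_\lambda(v_n)+\lambda B(v_n)$ is bounded as well. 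By the coercivity alternative ``$A(u)\to\infty$ or $B(u)\to\infty$ as $\|u\|\to\infty$'', boundedness of both $A(v_n)$ and $B(v_n)$ forces $\{v_n\}$ to be bounded in $X$. This gives (i), and (ii)--(iii) were built into the construction.

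The main obstacle is the middle step: producing the approximate critical point $v_n$ on a near-optimal path while \emph{simultaneously} keeping the lower bound $\varphi_\lambda(v_n)\geq c_\lambda-(\lambda-\lambda_n)$ that feeds the difference-quotient estimate for $B(v_n)$. This is exactly where one needs a careful quantitative minimax lemma (a deformation lemma argument showing that if no such point existed one could push the path below $c_\lambda$, contradicting the definition of $c_\lambda$), rather than a plain application of the mountain pass theorem; controlling $\sup_t B(\gamma_n(t))$ along the chosen paths uniformly in $n$ is the technical heart of making that work. The remaining verifications — finiteness and monotonicity of $c_\lambda$, a.e.\ differentiability, and the final coercivity bootstrap — are routine.
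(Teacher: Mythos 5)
You should first note that the paper offers no proof of Theorem \ref{thm1-2}: it is quoted from Jeanjean \cite{Jean}, so the only benchmark is Jeanjean's original argument, and your sketch does follow its strategy (monotonicity and a.e.\ differentiability of $\lambda\mapsto c_\lambda$, nearly optimal paths for $\lambda_n\nearrow\lambda$, the difference quotient $B=\frac{\varphi_{\lambda_n}-\varphi_\lambda}{\lambda-\lambda_n}$, and the coercivity alternative). However, as written there is a genuine gap in the middle step. No standard deformation or quantitative minimax lemma produces approximate critical points \emph{on} the near-optimal paths: Ekeland-type and deformation arguments only give points $v_n$ in a small neighbourhood of $\gamma_n([0,1])$, with the usual trade-off between the distance to the path and the bound on $\|\varphi_\lambda'(v_n)\|$. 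Your boundedness bootstrap, though, relies on $\varphi_{\lambda_n}(v_n)\le c_{\lambda_n}+(\lambda-\lambda_n)$, an inequality you only know at points of $\gamma_n$; since $B$ is merely continuous (not uniformly continuous, and not known a priori to be bounded on bounded sets -- boundedness of $v_n$ is the very thing being proved), the estimate on $B$ cannot be transferred from the path to the nearby point $v_n$. So the argument is circular exactly at the place you flag as the ``technical heart''.

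The repair is Jeanjean's ordering, which is the reverse of yours: first apply the difference-quotient estimate to \emph{path points}, concluding that whenever $\varphi_\lambda(\gamma_n(t))\ge c_\lambda-(\lambda-\lambda_n)$ one has $B(\gamma_n(t))\le -c_\lambda'+2$ and hence also a bound on $A(\gamma_n(t))$, so by the coercivity alternative $\|\gamma_n(t)\|\le K$ with $K=K(c_\lambda')$ independent of $n$; then run the deformation argument localized to $\{u:\|u\|\le K+1,\ |\varphi_\lambda(u)-c_\lambda|\le\epsilon\}$: if $\|\varphi_\lambda'\|\ge\delta$ there, one can deform $\gamma_n$ (for $n$ large) below the level $c_\lambda$, because its portion at levels $\ge c_\lambda-(\lambda-\lambda_n)$ lies in the ball of radius $K$, contradicting the definition of $c_\lambda$. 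This yields a Palais--Smale sequence at level $c_\lambda$ contained in the ball of radius $K+1$, so boundedness comes from the localization, not from evaluating $B$ at the PS points. A minor further remark: your two near-optimality requirements on $\gamma_n$ follow at once from $\varphi_\lambda\le\varphi_{\lambda_n}$ (since $\lambda_n\le\lambda$ and $B\ge0$) together with $c_{\lambda_n}\le c_\lambda+(-c_\lambda'+1)(\lambda-\lambda_n)$ for $n$ large; there is no need for, and no justification of, a uniform bound on $\sup_t B(\gamma_n(t))$ along the whole path, which in general fails at points where $\varphi_\lambda$ is far below $c_\lambda$.
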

For applying this theorem to system \eqref{main}, the main idea is to introduce a family of functionals $\varphi_{\lambda}$, if it satisfies all the conditions of Theorem \ref{thm1-2}, directly, we can obtain a bounded $(PS)_{c_{\lambda}}$ sequence. Through using a global compactness Lemma which is related to the functionals $\varphi_{\lambda}$ and its limit functional $\varphi_{\infty}$, we can deduce that $(PS)_{c_{\lambda}}$ condition holds, of course, before applying the global compactness Lemma, we have to consider the ground state solution for limit problem
\begin{equation}\label{main-4}
\left\{
  \begin{array}{ll}
    ги-\Delta u)^s+V_{\infty}u+\phi u=|u|^{p-1}u, & \hbox{in $\mathbb{R}^3$,} \\
    (-\Delta)^s \phi=u^2,& \hbox{in $\mathbb{R}^3$.}
  \end{array}
\right.
\end{equation}
We will establish the following result.
\begin{theorem}\label{thm1-3}
For $2<p<2_s^{\ast}-1$, problem \eqref{main-4} has a ground state solution.
\end{theorem}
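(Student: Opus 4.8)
The plan is to reduce the system to a single equation for $u$ by solving the linear Poisson equation $(-\Delta)^s\phi=u^2$, whose unique solution is $\phi_u = c_s\, |x|^{-(3-2s)}\ast u^2 = \int_{\mathbb{R}^3}\frac{u^2(y)}{|x-y|^{3-2s}}\,\mathrm{d}y$ (up to the constant $c_s$); here the condition $s>3/4$ guarantees via Hardy--Littlewood--Sobolev that the quadratic map $u\mapsto\int_{\mathbb{R}^3}\phi_u u^2\,\mathrm{d}x$ is well defined and of class $C^1$ on $H^s(\mathbb{R}^3)$. Substituting $\phi_u$ back produces the reduced functional
\begin{equation*}
I_\infty(u)=\frac12\int_{\mathbb{R}^3}|(-\Delta)^{s/2}u|^2+V_\infty u^2\,\mathrm{d}x+\frac14\int_{\mathbb{R}^3}\phi_u u^2\,\mathrm{d}x-\frac1{p+1}\int_{\mathbb{R}^3}|u|^{p+1}\,\mathrm{d}x,
\end{equation*}
whose critical points correspond to solutions $(u,\phi_u)$ of \eqref{main-4}. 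The ground state energy is $m_\infty=\inf\{I_\infty(u):u\in H^s\setminus\{0\},\ I_\infty'(u)=0\}$, and I would instead minimize $I_\infty$ over the Ruiz-type manifold $\mathcal{M}_\infty$ built from a linear combination of the Nehari identity $\langle I_\infty'(u),u\rangle=0$ and the Pohozaev identity; the point (as the authors stress in the introduction) is that on $\mathcal{M}_\infty$ the minimizing sequences are bounded for the whole range $2<p<2_s^\ast-1$, which a pure Nehari argument cannot deliver when $2<p\le 3$.

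The key steps, in order, are: (1) derive the Pohozaev identity for \eqref{main-4} — this requires the scaling $u_t(x)=u(x/t)$ combined with a careful handling of the nonlocal terms, using the known Pohozaev-type formulas for $\int|(-\Delta)^{s/2}u|^2$ and for the fractional-convolution term $\int\phi_u u^2$; (2) show $\mathcal{M}_\infty$ is a natural constraint (every minimizer on $\mathcal{M}_\infty$ is a free critical point) and that $\inf_{\mathcal{M}_\infty} I_\infty>0$; (3) take a minimizing sequence $\{u_n\}\subset\mathcal{M}_\infty$, prove it is bounded using the structure of $\mathcal{M}_\infty$, and then, since the problem is autonomous and invariant under translations, apply the Lions concentration-compactness lemma — ruling out vanishing (which would force $\|u_n\|_{p+1}\to0$ and contradict being on $\mathcal{M}_\infty$ with positive energy) and ruling out dichotomy (using the strict subadditivity of $t\mapsto m_\infty(t)$ of the constrained minimization levels, together with the scaling behaviour of the Coulomb term) — to obtain, after a suitable translation, a nonzero weak limit $u$; (4) show by weak lower semicontinuity and a fibering/scaling argument that $u\in\mathcal{M}_\infty$ and $I_\infty(u)=\inf_{\mathcal{M}_\infty}I_\infty$, hence $u$ is a critical point, so $(u,\phi_u)$ solves \eqref{main-4}; (5) verify this minimal value equals the ground state level $m_\infty$, by checking every nontrivial solution lies on $\mathcal{M}_\infty$. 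One may in addition replace $u$ by $|u|$ to get a nonnegative ground state.

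The main obstacle I expect is step (3): excluding dichotomy for the Coulomb-coupled functional. The nonlocal term $\int\phi_u u^2$ is not additive when the mass splits into two far-apart bumps — one must show the cross term is negligible (it decays like the distance to the power $-(3-2s)$), and then establish the strict subadditivity inequality $m_\infty < m_\infty^{(1)} + m_\infty^{(2)}$ for the split masses; the usual trick is a scaling argument exploiting that the quartic Coulomb term scales differently from the quadratic and the $L^{p+1}$ terms, which forces a strict drop in energy when all the mass is kept together. A secondary technical point is keeping track of the HLS constants and the restriction $s\in(3/4,1)$ throughout, since this is exactly what makes the convolution term subcritical with respect to $H^s$; I would isolate these in a preliminary lemma on the properties of $u\mapsto\phi_u$ before starting the variational argument.
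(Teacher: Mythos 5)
Your plan is essentially the paper's proof of Theorem \ref{thm1-3}: reduce to the single nonlocal equation via $\phi_u$, derive the Pohozaev identity, minimize the reduced functional on the Ruiz-type (Nehari--Pohozaev) manifold $\mathcal{M}$ on which minimizing sequences are automatically bounded for all $2<p<2_s^{\ast}-1$, run Lions' concentration--compactness on a suitable nonnegative density, pass to a translated weak limit, and conclude via the natural-constraint property, with nonnegativity obtained at the end by the usual truncation of the nonlinearity.

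The one step where your sketch, as written, would not go through literally is the exclusion of dichotomy. You propose a strict subadditivity inequality $m_\infty< m_\infty^{(1)}+m_\infty^{(2)}$ for ``split masses'', but there is no $L^2$-mass constraint in this minimization, so the split levels $m_\infty^{(i)}$ are not defined and the classical mass-subadditivity mechanism does not apply directly. The paper's argument (Lemma \ref{lem3-4}) instead works entirely with the constraint functional $\mathcal{G}$ and the reduced energy $J$: after cutting $u_n$ into $v_n+w_n$ supported on $B_{R_n}$ and $\mathbb{R}^3\setminus B_{2R_n}$, one shows the quadratic, $L^{p+1}$ and Coulomb terms split up to $o(1)$ (the Coulomb cross terms being nonnegative, so $\int\phi_{u_n}^s u_n^2\geq\int\phi_{v_n}^s v_n^2+\int\phi_{w_n}^s w_n^2+o(1)$), hence $J(u_n)\geq J(v_n)+J(w_n)+o(1)$ and $0=\mathcal{G}(u_n)\geq\mathcal{G}(v_n)+\mathcal{G}(w_n)+o(1)$; then whichever piece has $\mathcal{G}\leq0$ (or $\mathcal{G}=o(1)$) is rescaled by the fibration $u_\theta=\theta^{2s}u(\theta x)$ with $\theta_n\leq1$ back onto $\mathcal{M}$, giving $c\leq J((v_n)_{\theta_n})\leq J(v_n)\rightarrow\alpha<c$, a contradiction. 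This is the precise realization of the ``scaling forces a strict energy drop'' idea you gesture at, so your outline needs only this correction (replace the mass-splitting subadditivity by the superadditivity of $\mathcal{G}$ plus projection onto $\mathcal{M}$); everything else, including the role of $s>\frac34$ in controlling the convolution term, matches the paper.
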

Finally, choosing a sequence $\lambda_n\rightarrow1$, we can prove that $\{u_{\lambda_n}\}$ is a bounded $(PS)_{c_1}$ sequence for $\varphi_1$. Applying the global compactness Lemma again, it can yield Theorem \ref{thm1-1}.

We also obtain some nonexistence results.
\begin{theorem}\label{thm1-4}
If $1<p\leq2$, then for any $\lambda\geq\frac{1}{4}$, system
\begin{equation}\label{main-5}
\left\{
  \begin{array}{ll}
    (-\Delta)^s u+u+\lambda\phi(x)u=|u|^{p-1}u, & \hbox{in $\mathbb{R}^3$,} \\
    (-\Delta)^s \phi=u^2,& \hbox{in $\mathbb{R}^3$}
  \end{array}
\right.
\end{equation}
has no solution.
\end{theorem}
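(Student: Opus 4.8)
The plan is a contradiction argument built on the Nehari identity, the Pohozaev identity, and — for the delicate range — a quantitative bound on the nonlocal term.

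Suppose $(u,\phi)$ solves \eqref{main-5} with $u\not\equiv0$. The second equation forces $\phi=\phi_u:=(-\Delta)^{-s}(u^2)\ge0$, and pairing it with $\phi_u$ gives $\|(-\Delta)^{s/2}\phi_u\|_2^2=\int_{\mathbb R^3}\phi_u u^2\,dx=:D>0$. Pairing the first equation with $u$ produces the Nehari-type identity
\[
\|(-\Delta)^{s/2}u\|_2^2+\|u\|_2^2+\lambda D=\int_{\mathbb R^3}|u|^{p+1}\,dx,\tag{N}
\]
while the Pohozaev identity for \eqref{main-5}, obtained as the one used earlier in the paper (pair the equations with $x\cdot\nabla u$ and with $x\cdot\nabla\phi$, and use that the Riesz potential rescales as $\phi_{u(\cdot/t)}=t^{2s}\phi_u(\cdot/t)$, so the coupling term carries the weight $\tfrac{3+2s}{4}$), reads
\[
\frac{3-2s}{2}\,\|(-\Delta)^{s/2}u\|_2^2+\frac32\,\|u\|_2^2+\frac{3+2s}{4}\,\lambda D=\frac{3}{p+1}\int_{\mathbb R^3}|u|^{p+1}\,dx.\tag{P}
\]

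I would then eliminate the nonlinearity. Taking $\tfrac32(\mathrm N)-(\mathrm P)$ removes $\|u\|_2^2$ and leaves $s\,\|(-\Delta)^{s/2}u\|_2^2+\tfrac{3-2s}{4}\lambda D=(\tfrac32-\tfrac{3}{p+1})\int|u|^{p+1}$; substituting $\int|u|^{p+1}$ back from $(\mathrm N)$ gives, with $\mu:=\tfrac32-\tfrac3{p+1}$,
\[
(s-\mu)\,\|(-\Delta)^{s/2}u\|_2^2+\Big(\frac{3}{p+1}-\frac{3+2s}{4}\Big)\lambda D=\mu\,\|u\|_2^2 .
\]
For $1<p\le2$ one has $0<\mu\le\tfrac12$, hence $s-\mu>\tfrac14>0$ (since $s>\tfrac34$) and $\mu>0$, but $\tfrac3{p+1}-\tfrac{3+2s}{4}$ changes sign as $p$ runs over $(1,2]$; so this identity, on its own, is compatible with $u\not\equiv0$ for every $\lambda>0$. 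Moreover $(\mathrm N)$ and $(\mathrm P)$ are invariant under the reparametrization $D\mapsto\lambda D$, hence are intrinsically blind to the size of $\lambda$: the hypothesis $\lambda\ge\tfrac14$ has to be fed in through a separate, quantitative bound on $D$.

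That bound is the Hardy–Littlewood–Sobolev inequality: $D=C_{3,s}\iint|x-y|^{-(3-2s)}u^2(x)u^2(y)\,dx\,dy\le C\|u\|_{L^{12/(3+2s)}}^4$, and since $2<\tfrac{12}{3+2s}<2_s^{\ast}$ for $s\in(\tfrac34,1)$ one interpolates $\|u\|_{L^{12/(3+2s)}}$ between $L^2(\mathbb R^3)$ and $L^{p+1}(\mathbb R^3)$ (equivalently, between $L^2$ and $\dot H^{s}$), then inserts this into the displayed identity after once more replacing $\int|u|^{p+1}$ via $(\mathrm N)$. A square in $\|(-\Delta)^{s/2}u\|_2$ and $\sqrt D$ then gets completed whose leading coefficient turns out to be proportional to $4\lambda-1$; for $\lambda\ge\tfrac14$ this coefficient is nonnegative while the remaining terms have the opposite sign, which forces $\|(-\Delta)^{s/2}u\|_2=0$, i.e.\ $u\equiv0$, a contradiction. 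The main obstacle is exactly this last step: because Nehari and Pohozaev cannot detect $\lambda$, one must carry the constants in the Hardy–Littlewood–Sobolev and Sobolev estimates all the way through the computation so that the square being completed is precisely the one whose coefficient vanishes at $\lambda=\tfrac14$; a subsidiary technical point is justifying all the integrations by parts (in particular the identity $\int(-\Delta)^{s/2}u\,(-\Delta)^{s/2}\phi_u=\int u^3$, if that alternative route through pairing the first equation with $\phi_u$ is used), which relies on the fractional regularity theory referenced in the paper and is the reason the restriction $s>\tfrac34$ — ensuring $\phi_u u\in (H^s(\mathbb R^3))'$ — is imposed.
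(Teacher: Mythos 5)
Your reduction to the Nehari identity (N) and the Pohozaev identity (P), and your observation that these two alone cannot detect $\lambda$ (they only involve the product $\lambda D$), are correct --- but that is exactly where the proposal stops being a proof. The entire content of the theorem is the quantitative step you leave as ``a square \dots gets completed whose leading coefficient turns out to be proportional to $4\lambda-1$'', and the route you propose for it cannot deliver that. The threshold $\tfrac14$ is constant-free: it does not come from Hardy--Littlewood--Sobolev or Sobolev embedding constants, so an estimate of the form $D\le C\|u\|_{L^{12/(3+2s)}}^4$ with such constants carried through interpolation has no reason to produce a coefficient vanishing exactly at $\lambda=\tfrac14$. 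Worse, the inequality points the wrong way: to exploit $\lambda\ge\tfrac14$ one needs the coupling term $\lambda\int\phi u^2\,dx$ to \emph{dominate} something (a lower-bound role for $D$), whereas HLS only bounds $D$ from above; and in the regime where the coefficient $\tfrac{3}{p+1}-\tfrac{3+2s}{4}$ in your combined identity is negative, that identity is perfectly consistent with nontrivial $u$, so no contradiction can be extracted from it plus an upper bound on $D$.

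What the paper actually does (and what is missing here) needs neither (P) nor HLS. Test the second equation with $|u|$ and use Plancherel: $\int_{\mathbb R^3}|u|^3\,dx=\int_{\mathbb R^3}|\xi|^{2s}\widehat{\phi}\,\overline{\widehat{|u|}}\,d\xi\le\|(-\Delta)^{s/2}|u|\|_2\,\|(-\Delta)^{s/2}\phi\|_2\le\|(-\Delta)^{s/2}u\|_2^2+\tfrac14\|(-\Delta)^{s/2}\phi\|_2^2$, the last step being Young's inequality with weights $\sqrt2$ and $1/\sqrt2$ (together with $\|(-\Delta)^{s/2}|u|\|_2\le\|(-\Delta)^{s/2}u\|_2$); this weighted Cauchy--Schwarz is the sole source of the number $\tfrac14$. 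Since testing the second equation with $\phi$ gives $\|(-\Delta)^{s/2}\phi\|_2^2=\int\phi u^2\,dx$, inserting these into your identity (N) with $\lambda\ge\tfrac14$ yields $0\ge\int_{\mathbb R^3}\bigl(|u|^3+u^2-|u|^{p+1}\bigr)\,dx$, and the pointwise inequality $t^{p+1}\le t^2+t^3$ for all $t\ge0$, valid precisely because $1<p\le2$, forces $u\equiv0$. Your proposal never invokes this pointwise inequality --- which is what actually handles the whole range $1<p\le2$ --- and without it (or an equivalent mechanism replacing your unexecuted completion of the square) the argument does not close.
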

\begin{theorem}\label{thm1-5}
If $p=2_s^{\ast}$, and $V(x)$ satisfies $(V_1)-(V_2)$ or be a positive constant, then problem \eqref{main} has no solution.
\end{theorem}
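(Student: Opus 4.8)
The plan is to establish Theorem \ref{thm1-5} via a Pohozaev-type identity combined with the Nehari identity, exploiting the fact that at the critical exponent $p=2_s^{\ast}-1$ (read as $|u|^{p-1}u = |u|^{2_s^\ast-2}u$, since the exponent in the nonlinearity must be $2_s^{\ast}-1$ for the term to be critical) the scaling invariances of all the terms conspire to force $u\equiv 0$. First I would assume, for contradiction, that $(u,\phi)$ is a nontrivial solution of \eqref{main} with $\phi=\phi_u$ the unique solution of $(-\Delta)^s\phi=u^2$ given by the Riesz potential. The first identity is the Nehari identity: testing the first equation with $u$ itself gives
\begin{equation*}
\int_{\mathbb{R}^3}|(-\Delta)^{\frac{s}{2}}u|^2\,{\rm d}x+\int_{\mathbb{R}^3}V(x)|u|^2\,{\rm d}x+\int_{\mathbb{R}^3}\phi_u u^2\,{\rm d}x=\int_{\mathbb{R}^3}|u|^{2_s^{\ast}}\,{\rm d}x.
\end{equation*}

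The second, and crucial, ingredient is the Pohozaev identity for the system. I would derive it by testing the first equation against $(x,\nabla u)$ (rigorously, by working on the Caffarelli--Silvestre extension or by the regularization/cutoff arguments standard for fractional problems). The fractional kinetic term contributes $\frac{3-2s}{2}\int|(-\Delta)^{\frac{s}{2}}u|^2$, the potential term contributes $-\frac{1}{2}\int\bigl(3V(x)+(x,\nabla V(x))\bigr)|u|^2$, the nonlinearity contributes $-\frac{3}{2_s^{\ast}}\int|u|^{2_s^{\ast}}$, and the nonlocal coupling term $\int\phi_u u^2$ requires care: using that $\phi_u$ is itself $s$-harmonic away from the source and scales appropriately under $x\mapsto \lambda x$, the term $\int\phi_u u^2$ scales with weight $2\cdot 3 - (3-2s) - (3) = \ldots$; carrying this out one obtains the contribution $-\frac{6-2s}{4}\cdot\frac{?}{}$ — more precisely, because $\int_{\mathbb{R}^3}\phi_u u^2\,{\rm d}x$ is (up to a constant) the $\dot H^{-s}$-type quantity $\iint \frac{u^2(x)u^2(y)}{|x-y|^{3-2s}}\,{\rm d}x\,{\rm d}y$, under the scaling $u_\lambda(x)=u(x/\lambda)$ it behaves like $\lambda^{3+2s}$ times the dilation of the double integral, yielding the Pohozaev weight $\frac{3+2s}{4}$ for this term (the standard computation; cf. the Pohozaev identity already invoked in the paper's keyword list). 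Combining, the Pohozaev identity reads
\begin{equation*}
\frac{3-2s}{2}\int_{\mathbb{R}^3}|(-\Delta)^{\frac{s}{2}}u|^2\,{\rm d}x+\frac{1}{2}\int_{\mathbb{R}^3}\bigl(3V(x)+(x,\nabla V(x))\bigr)|u|^2\,{\rm d}x+\frac{3+2s}{4}\int_{\mathbb{R}^3}\phi_u u^2\,{\rm d}x=\frac{3}{2_s^{\ast}}\int_{\mathbb{R}^3}|u|^{2_s^{\ast}}\,{\rm d}x.
\end{equation*}

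Now I would eliminate $\int|u|^{2_s^{\ast}}$ between the Nehari and Pohozaev identities. Multiplying the Nehari identity by $\frac{3-2s}{2}$ (note $\frac{3}{2_s^{\ast}}=\frac{3(3-2s)}{2(3+2s)}$, and $\frac{3}{2_s^\ast}=\frac{3-2s}{2}\cdot\frac{3}{3+2s}$... the key arithmetic is that $\frac{3}{2_s^\ast}$ equals neither the kinetic weight nor the coupling weight from Pohozaev, so subtraction leaves a nontrivial combination). Carrying out the subtraction, the critical term cancels exactly against $\frac{3-2s}{2}$ times the Nehari right-hand side only after one checks $\frac{3}{2_s^{\ast}}=\frac{3-2s}{2}$, which is false; instead the difference of the two identities produces an equation of the form
\begin{equation*}
c_1\int_{\mathbb{R}^3}|(-\Delta)^{\frac{s}{2}}u|^2\,{\rm d}x+\int_{\mathbb{R}^3}\Bigl(\tfrac{3-2s}{2}V(x)-\tfrac{1}{2}\bigl(3V(x)+(x,\nabla V(x))\bigr)\Bigr)|u|^2\,{\rm d}x+c_2\int_{\mathbb{R}^3}\phi_u u^2\,{\rm d}x=0,
\end{equation*}
where, after simplification, the potential coefficient becomes $-\frac{1}{2}\bigl(2sV(x)+(x,\nabla V(x))\bigr)$, which is $\leq 0$ by hypothesis $(V_1)$ (or identically a negative constant if $V\equiv V_\infty>0$, since then $(x,\nabla V)=0$ and the coefficient is $-sV_\infty<0$), and $c_1,c_2$ turn out to have strict sign making every term on the left of one sign. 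Since $\phi_u>0$ and $\int\phi_u u^2>0$ whenever $u\not\equiv 0$, and the kinetic term is $\geq 0$, all terms have the same strict sign unless $u\equiv 0$, a contradiction. I expect the main obstacle to be the rigorous justification of the Pohozaev identity in the fractional setting — in particular obtaining the exact weight $\frac{3+2s}{4}$ for the nonlocal coupling term $\int\phi_u u^2$ and justifying the integration by parts against $(x,\nabla u)$ for functions of only $H^s$-regularity; this is handled by passing to the $s$-harmonic extension à la Caffarelli--Silvestre and using cutoff functions, or by invoking the Pohozaev identity for fractional systems already available in the literature (and implicitly used elsewhere in this paper). The sign bookkeeping of $c_1, c_2$ and the potential coefficient, and verifying they are all compatible, is then a short computation.
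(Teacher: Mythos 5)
Your route is the same as the paper's: combine the Nehari identity (testing the first equation with $u$) with the Pohozaev identity of Proposition \ref{pro2-1}, eliminate the critical term, and use $(V_1)$ together with the positivity of the coupling term, so the proposal is essentially the paper's proof. One arithmetic point in your write-up must be corrected, because it is exactly what makes the bookkeeping close: since $2_s^{\ast}=\frac{6}{3-2s}$, the equality $\frac{3}{2_s^{\ast}}=\frac{3-2s}{2}$ is \emph{true} (you assert it is false, and your formula $\frac{3}{2_s^{\ast}}=\frac{3(3-2s)}{2(3+2s)}$ is wrong). Consequently, multiplying the Nehari identity by $\frac{3-2s}{2}$ and subtracting it from the Pohozaev identity cancels \emph{both} the critical term and the kinetic term (so $c_1=0$, not of ``strict sign''), leaving precisely
\begin{equation*}
\frac{1}{2}\int_{\mathbb{R}^3}\bigl(2sV(x)+(x,\nabla V(x))\bigr)u^2\,{\rm d}x+\frac{6s-3}{4}\int_{\mathbb{R}^3}\phi_u u^2\,{\rm d}x=0,
\end{equation*}
which is the identity the paper derives. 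By $(V_1)$ (or $(x,\nabla V)\equiv0$ with $V\equiv V_\infty>0$) and $s>\frac{3}{4}$ both terms are nonnegative, hence each vanishes; the paper then uses $\int_{\mathbb{R}^3}\phi_u u^2\,{\rm d}x=\int_{\mathbb{R}^3}|(-\Delta)^{\frac{s}{2}}\phi_u|^2\,{\rm d}x=0$ to get $\phi_u\equiv0$ and then $u\equiv0$, which is the same endgame as your strict-positivity argument for $\int\phi_u u^2$ when $u\not\equiv0$. Your deferral of the rigorous Pohozaev identity to the Caffarelli--Silvestre extension or the literature is acceptable, since the paper establishes it directly in Proposition \ref{pro2-1} after the regularity discussion; with the arithmetic fixed as above, your sign argument coincides with the paper's.
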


The paper is organized as follows. In section 2, we present some preliminaries results, such as: we will study the properties of the local term $\phi u$ in system \eqref{main}, regularity for system \eqref{main} and Pohozaev identity will be proved. In section 3, we will prove Theorem \ref{thm1-3}. In section 4, we will establish a global compactness lemma, and our main result--Theorem \ref{thm1-1} will be proved. In section 5, we will prove Theorem \ref{thm1-4} and \ref{thm1-5}.
\section{Preliminaries}

In this section, we outline the variational framework for studying problem \eqref{main}, investigate some properties of the local term $\phi u$ appearing in \eqref{main}, and establish the Pohozaev identity of system \eqref{main}. In the sequel, $\alpha$ will denote a fixed number, $\alpha\in(0,1)$, we denote by $\|\cdot\|_{p}$ the usual norm of the space $L^p(\mathbb{R}^N)$, the letter $c_i$ ($i=1,2,\ldots$) or $C$ denote by some positive constants. For simplicity, We denote $\widehat{u}$ the Fourier transform of $u$.

\subsection{Work space}
We define the homogeneous fractional Sobolev space $\mathcal{D}^{\alpha,2}(\mathbb{R}^3)$ as follows
\begin{equation*}
\mathcal{D}^{\alpha,2}(\mathbb{R}^3)=\Big\{u\in L^{2_{\alpha}^{\ast}}(\mathbb{R}^3)\,\,\Big|\,\,|\xi|^{\alpha}\widehat{u}(\xi)\in L^2(\mathbb{R}^3)\Big\}
\end{equation*}
which is the completion of $C_0^{\infty}(\mathbb{R}^3)$ under the norm
\begin{equation*}
\|u\|_{\mathcal{D}^{\alpha,2}}=\int_{\mathbb{R}^3}|(-\Delta)^{\frac{\alpha}{2}}u|^2\,{\rm d}x=\int_{\mathbb{R}^3}|\xi|^{2\alpha}|\widehat{u}(\xi)|^2\,{\rm d}\xi
\end{equation*}
The fractional Sobolev space $H^{\alpha}(\mathbb{R}^3)$ can be described by means of the Fourier transform, i.e.
\begin{equation*}
H^{\alpha}(\mathbb{R}^3)=\Big\{u\in L^2(\mathbb{R}^3)\,\,\Big|\,\,\int_{\mathbb{R}^3}(|\xi|^{2\alpha}|\widehat{u}(\xi)|^2+|\widehat{u}(\xi)|^2)\,{\rm d}\xi<+\infty\Big\}.
\end{equation*}
In this case, the inner product and the norm are defined as
\begin{equation*}
(u,v)=\int_{\mathbb{R}^3}(|\xi|^{2\alpha}\widehat{u}(\xi)\overline{\widehat{v}(\xi)}+\widehat{u}(\xi)\overline{\widehat{v}(\xi)})\,{\rm d}\xi
\end{equation*}
\begin{equation*}
\|u\|_{H^{\alpha}}=\bigg(\int_{\mathbb{R}^3}(|\xi|^{2\alpha}|\widehat{u}(\xi)|^2+|\widehat{u}(\xi)|^2)\,{\rm d}\xi\bigg)^{\frac{1}{2}},
\end{equation*}
From Plancherel's theorem we have $\|u\|_2=\|\widehat{u}\|_2$ and $\||\xi|^{\alpha}\widehat{u}\|_2=\|(-\Delta)^{\frac{\alpha}{2}}u\|_2$. Hence
\begin{equation*}
\|u\|_{H^{\alpha}}=\bigg(\int_{\mathbb{R}^3}(|(-\Delta)^{\frac{\alpha}{2}}u(x)|^2+|u(x)|^2)\,{\rm d}x\bigg)^{\frac{1}{2}},\quad \forall u\in H^{\alpha}(\mathbb{R}^3).
\end{equation*}

In terms of finite differences, the fractional Sobolev space $H^{\alpha}(\mathbb{R}^3)$ also can be defined as follows
\begin{equation*}
H^{\alpha}(\mathbb{R}^3)=\Big\{u\in L^2(\mathbb{R}^3)\,\,\Big|\,\,\frac{|u(x)-u(y)|}{|x-y|^{\frac{3}{2}+\alpha}}\in L^2(\mathbb{R}^3\times\mathbb{R}^3)\Big\}
\end{equation*}
endowed with the natural norm
\begin{equation*}
\|u\|_{H^{\alpha}}=\bigg(\int_{\mathbb{R}^3}|u|^2\,{\rm d}x+\int_{\mathbb{R}^3}\int_{\mathbb{R}^3}\frac{|u(x)-u(y)|^2}{|x-y|^{3+2\alpha}}\,{\rm d}x\,{\rm d}y\bigg)^{\frac{1}{2}}.
\end{equation*}

Also, in light of Proposition 3.4 and Proposition 3.6 in \cite{NPV}, we have
\begin{equation*}
\|(-\Delta)^{\frac{\alpha}{2}}u\|_2^2=\int_{\mathbb{R}^3}|\xi|^{2\alpha}|\widehat{u}(\xi)|^2\,{\rm d}\xi=\frac{1}{C(\alpha)}\int_{\mathbb{R}^3}\int_{\mathbb{R}^3}\frac{|u(x)-u(y)|^2}{|x-y|^{3+2\alpha}}\,{\rm d}x\,{\rm d}y.
\end{equation*}

It is well known that $H^{\alpha}(\mathbb{R}^3)$ is continuously embedded into $L^p(\mathbb{R}^3)$ for $2\leq p\leq 2_{\alpha}^{\ast}$ ($2_{\alpha}^{\ast}=\frac{6}{3-2\alpha}$), and for any $\alpha\in(0,1)$, there exists a best constant $S_{\alpha}>0$ such that
\begin{equation}\label{equ2-1}
S_{\alpha}=\inf_{u\in \mathcal{D}^{\alpha,2}}\frac{\int_{\mathbb{R}^3}|(-\Delta)^{\frac{\alpha}{2}}u|^2\,{\rm d}x}{\Big(\int_{\mathbb{R}^3}|u(x)|^{2_{\alpha}^{\ast}}\,{\rm d}x\Big)^{\frac{2}{2_{\alpha}^{\ast}}}}.
\end{equation}
We recall some fundamental facts in the Sobolev space in $H^s(\mathbb{R}^3)$.
\begin{lemma}\label{lemp-2-1}(Cut-off estimate, \cite{AS}).
Let $u\in H^{\alpha}(\mathbb{R}^3)$ and $\varphi\in C_0^{\infty}(\mathbb{R}^3)$ with $0\leq \varphi\leq1$ and $|\varphi'|\leq L$. Then, for every pair of measurable sets $\Omega_1,\Omega_2\subset\mathbb{R}^3$, we have
\begin{align*}
\int_{\Omega_1\times\Omega_2}\frac{|\varphi(x)u(x)-\varphi(y)u(y)|^2}{|x-y|^{3+2\alpha}}\,{\rm d}x\,{\rm d}y\leq& C\min\Big\{\int_{\Omega_1}|u(x)|^2\,{\rm d}x,\,\,\int_{\Omega_2}|u(x)|^2\,{\rm d}x\Big\}\\
&+C\int_{\Omega_1\times\Omega_2}\frac{|u(x)-u(y)|^2}{|x-y|^{3+2\alpha}}\,{\rm d}x\,{\rm d}y
\end{align*}
where $C$ depends on $s$ and the constant $L$.
\end{lemma}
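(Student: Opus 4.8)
The plan is to reduce the whole estimate to one elementary algebraic identity plus a uniform bound on a Gagliardo-type integral of the cut-off alone. First I would write, for $x\neq y$,
\[
\varphi(x)u(x)-\varphi(y)u(y)=\varphi(y)\big(u(x)-u(y)\big)+u(x)\big(\varphi(x)-\varphi(y)\big),
\]
and, symmetrically, $\varphi(x)u(x)-\varphi(y)u(y)=\varphi(x)\big(u(x)-u(y)\big)+u(y)\big(\varphi(x)-\varphi(y)\big)$. Squaring, using $(a+b)^2\le 2a^2+2b^2$ together with $0\le\varphi\le1$, the first identity gives
\[
|\varphi(x)u(x)-\varphi(y)u(y)|^2\le 2|u(x)-u(y)|^2+2|u(x)|^2|\varphi(x)-\varphi(y)|^2,
\]
and the second identity gives the same bound with $|u(y)|^2$ in place of $|u(x)|^2$. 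Dividing by $|x-y|^{3+2\alpha}$ and integrating over $\Omega_1\times\Omega_2$, the first term on the right is precisely the Gagliardo term appearing in the claimed inequality (up to the constant $2$); by Fubini the remaining term is controlled, in the first case, by
\[
\int_{\Omega_1}|u(x)|^2\bigg(\int_{\mathbb{R}^3}\frac{|\varphi(x)-\varphi(y)|^2}{|x-y|^{3+2\alpha}}\,{\rm d}y\bigg)\,{\rm d}x,
\]
and, in the symmetric case, by the same expression with $\Omega_1$ replaced by $\Omega_2$ and $u(x)$ by $u(y)$.

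Thus the only real point is the uniform estimate $\displaystyle\sup_{z\in\mathbb{R}^3}\int_{\mathbb{R}^3}\frac{|\varphi(z)-\varphi(w)|^2}{|z-w|^{3+2\alpha}}\,{\rm d}w\le C(L,\alpha)$. I would split the inner integral at $|z-w|=1$. On $\{|z-w|\le1\}$ the mean value inequality together with $|\nabla\varphi|\le L$ yields $|\varphi(z)-\varphi(w)|\le L|z-w|$, so the integrand is $\le L^2|z-w|^{-1-2\alpha}$, and in polar coordinates in $\mathbb{R}^3$ this is integrable near the origin precisely because $2-(1+2\alpha)>-1$, i.e. $\alpha<1$. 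On $\{|z-w|>1\}$, $0\le\varphi\le1$ gives $|\varphi(z)-\varphi(w)|\le2$, so the integrand is $\le 4|z-w|^{-3-2\alpha}$, which is integrable at infinity because $3+2\alpha>3$, i.e. $\alpha>0$. Summing the two contributions gives a finite constant depending only on $L$ and $\alpha$ — in the application $\alpha=s$, matching the statement — and manifestly independent of $z$.

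Finally, combining the displays above yields the inequality once with $\int_{\Omega_1}|u|^2$ and once with $\int_{\Omega_2}|u|^2$ on the right, and keeping the smaller of the two bounds produces the $\min$, with $C=2\max\{1,C(L,\alpha)\}$. I do not expect a genuine obstacle here: the argument is just the product rule, the two elementary inequalities, Fubini, and the near/far decomposition; the only thing to watch is that integrability at both ends of the radial integral is exactly what forces $\alpha\in(0,1)$, which is the standing hypothesis. Since this cut-off estimate is also recorded in \cite{AS}, one could alternatively simply cite it, but the self-contained sketch above is short.
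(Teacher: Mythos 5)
Your argument is correct: the product-rule splitting $\varphi(x)u(x)-\varphi(y)u(y)=\varphi(y)(u(x)-u(y))+u(x)(\varphi(x)-\varphi(y))$ (and its symmetric counterpart), the bound $(a+b)^2\le 2a^2+2b^2$ with $0\le\varphi\le1$, and the uniform estimate $\sup_{z}\int_{\mathbb{R}^3}|\varphi(z)-\varphi(w)|^2|z-w|^{-3-2\alpha}\,{\rm d}w\le C(L,\alpha)$ obtained by splitting at $|z-w|=1$ (Lipschitz bound near the diagonal, boundedness far away) give exactly the stated inequality, with the $\min$ coming from keeping the better of the two symmetric bounds. Note that the paper itself offers no proof of this lemma — it simply cites \cite{AS} — and your self-contained argument is precisely the standard one behind that citation, so there is nothing to flag beyond reading the hypothesis $|\varphi'|\le L$ as $|\nabla\varphi|\le L$.
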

For any measurable function $u$ consider the corresponding symmetric radial decreasing rearrangement $u^{\ast}$, whose classical definition and basic properties
can be found, for instance, in \cite{BF}. We recall the fractional the Polya-Szeg\"{o} inequality in \cite{DPV}.
\begin{lemma}\label{lemp-2-2}
For any $u\in H^{\alpha}(\mathbb{R}^3)$, the following inequality holds
\begin{equation*}
\int_{\mathbb{R}^3}|(-\Delta )^{\frac{\alpha}{2}}u^{\ast}|^2\,{\rm d}x\leq \int_{\mathbb{R}^3}|(-\Delta )^{\frac{\alpha}{2}}u|^2\,{\rm d}x
\end{equation*}
or equivalently
\begin{equation*}
\int_{\mathbb{R}^3}\int_{\mathbb{R}^3}\frac{|u^{\ast}(x)-u^{\ast}(y)|^2}{|x-y|^{3+2\alpha}}\,{\rm d}x\,{\rm d}y\leq\int_{\mathbb{R}^3}\int_{\mathbb{R}^3}\frac{|u(x)-u(y)|^2}{|x-y|^{3+2\alpha}}\,{\rm d}x\,{\rm d}y.
\end{equation*}
\end{lemma}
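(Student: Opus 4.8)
\emph{Plan.} This is the fractional P\'{o}lya--Szeg\"{o} inequality (see \cite{DPV}); I would deduce it from the Gaussian (heat--kernel) representation of the Gagliardo seminorm combined with the Riesz rearrangement inequality. First I would reduce to nonnegative functions: since $|u(x)-u(y)|\geq\big|\,|u(x)|-|u(y)|\,\big|$ pointwise and $(|u|)^{\ast}=u^{\ast}$, the Gagliardo seminorm of $|u|$ does not exceed that of $u$, so it suffices to treat $0\leq u\in H^{\alpha}(\mathbb{R}^3)$ (and by density of $C_0^{\infty}$ one may even assume $0\leq u\in C_0^{\infty}(\mathbb{R}^3)$, recovering the general case by a limiting argument). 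The equivalence of the two displayed formulations is immediate from the identity $\|(-\Delta)^{\alpha/2}v\|_2^2=C(\alpha)^{-1}\iint|v(x)-v(y)|^2|x-y|^{-3-2\alpha}\,{\rm d}x\,{\rm d}y$ recalled above, applied with $v=u$ and with $v=u^{\ast}$; hence I would only establish the Gagliardo form.

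\emph{Heat--kernel representation.} Using $\lambda^{-\beta}=\Gamma(\beta)^{-1}\int_0^{\infty}t^{\beta-1}e^{-t\lambda}\,{\rm d}t$ with $\lambda=|x-y|^2$, $\beta=\frac{3}{2}+\alpha$, and Tonelli's theorem (every integrand is nonnegative), I would write
\begin{equation*}
\iint_{\mathbb{R}^3\times\mathbb{R}^3}\frac{|u(x)-u(y)|^2}{|x-y|^{3+2\alpha}}\,{\rm d}x\,{\rm d}y=\frac{1}{\Gamma(\frac{3}{2}+\alpha)}\int_0^{\infty}t^{\frac{1}{2}+\alpha}\bigg(\iint_{\mathbb{R}^3\times\mathbb{R}^3}|u(x)-u(y)|^2e^{-t|x-y|^2}\,{\rm d}x\,{\rm d}y\bigg)\,{\rm d}t.
\end{equation*}
Expanding the square and using $\int_{\mathbb{R}^3}e^{-t|z|^2}\,{\rm d}z=(\pi/t)^{3/2}$, the inner integral equals $2(\pi/t)^{3/2}\|u\|_2^2-2\int_{\mathbb{R}^3}u\,(G_t\ast u)\,{\rm d}x$, where $G_t(z):=e^{-t|z|^2}$.

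\emph{Rearrangement and conclusion.} Since $u\geq0$ and $G_t$ is radially symmetric and decreasing, $G_t^{\ast}=G_t$, so the Riesz rearrangement inequality gives
\begin{equation*}
\int_{\mathbb{R}^3}u\,(G_t\ast u)\,{\rm d}x=\iint u(x)u(y)G_t(x-y)\,{\rm d}x\,{\rm d}y\leq\iint u^{\ast}(x)u^{\ast}(y)G_t(x-y)\,{\rm d}x\,{\rm d}y=\int_{\mathbb{R}^3}u^{\ast}\,(G_t\ast u^{\ast})\,{\rm d}x,
\end{equation*}
while equimeasurability (equivalently $(u^2)^{\ast}=(u^{\ast})^2$) gives $\|u^{\ast}\|_2=\|u\|_2$. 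Hence for every $t>0$,
\begin{equation*}
\iint|u^{\ast}(x)-u^{\ast}(y)|^2e^{-t|x-y|^2}\,{\rm d}x\,{\rm d}y\leq\iint|u(x)-u(y)|^2e^{-t|x-y|^2}\,{\rm d}x\,{\rm d}y,
\end{equation*}
and multiplying by the nonnegative weight $t^{\frac{1}{2}+\alpha}/\Gamma(\frac{3}{2}+\alpha)$ and integrating over $t\in(0,\infty)$ yields the second inequality of the lemma, whence also the first.

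\emph{Main obstacle.} The essential ingredient is the Riesz rearrangement inequality, which I would cite; the remaining steps are routine, and the only points requiring care are the justification of the heat--kernel representation (the integrability of $t\mapsto t^{\frac{1}{2}+\alpha}e^{-t|x-y|^2}$ near $t=0$ is exactly what encodes the local singularity $|x-y|^{-3-2\alpha}$, which is integrable against $|u(x)-u(y)|^2$ for $u\in H^{\alpha}$) and the identity $(u^2)^{\ast}=(u^{\ast})^2$, ensuring the $L^2$ term is rearrangement--invariant. An alternative would be to symmetrize the Caffarelli--Silvestre extension \cite{CS} and apply the classical P\'{o}lya--Szeg\"{o} inequality, but controlling the symmetrization in the extension variable is more delicate, so the Riesz route above seems preferable.
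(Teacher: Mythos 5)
Your proof is correct. Note, however, that the paper itself gives no argument for this lemma: it is stated as a known result (the fractional P\'{o}lya--Szeg\"{o} inequality) and simply attributed to the reference \cite{DPV}, so there is no ``paper proof'' to match. What you supply is the standard self-contained derivation: reduce to $u\geq0$ via $|u(x)-u(y)|\geq\big||u(x)|-|u(y)|\big|$ and $(|u|)^{\ast}=u^{\ast}$, write $|x-y|^{-3-2\alpha}$ by the subordination formula $\lambda^{-\beta}=\Gamma(\beta)^{-1}\int_0^\infty t^{\beta-1}e^{-t\lambda}\,{\rm d}t$, expand the square for each fixed $t>0$ (each piece is finite since $u\in L^2$, so the expansion is legitimate; and you correctly avoid integrating the pieces separately in $t$, comparing instead the full Gaussian-weighted quantity for each $t$, which is essential since $t^{\alpha-1}$ is not integrable at infinity), and then invoke equimeasurability for the $L^2$ term and the Riesz rearrangement inequality for the cross term, with the sign working in your favor because the cross term enters with a minus sign. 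This is pleasingly consistent with the paper's own toolkit, since the Riesz inequality is exactly the paper's Lemma \ref{lemp-2-3}; the only thing your route adds beyond what the paper assembles is the heat-kernel representation, and what it buys is a proof independent of the cited source (and, in particular, independent of the Caffarelli--Silvestre extension, which as you note would require extra care in the extension variable).
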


\begin{lemma}\label{lemp-2-3}(Riesz's inequality)
Let $H$ be a nonincreasing function on the positive real line with $H(t)\rightarrow0$ as $t\rightarrow+\infty$. Then, for any pair $f$, $g$ of nonnegative measurable
functions on $\mathbb{R}^N$ that vanish at infinity,
\begin{equation*}
\int_{\mathbb{R}^N}\int_{\mathbb{R}^N}f(x)g(y)H(|x-y|)\,{\rm d}x\,{\rm d}y\leq\int_{\mathbb{R}^N}\int_{\mathbb{R}^N}f^{\ast}(x)g^{\ast}(y)H(|x-y|)\,{\rm d}x\,{\rm d}y.
\end{equation*}
If $H$ is strictly decreasing, then equality (with a finite and nonzero value of the integral) occurs only if there exists a translation $\sigma$ such that $f=f^{\ast}\circ\sigma$ and $g=g^{\ast}\circ\sigma$ almost everywhere.
\end{lemma}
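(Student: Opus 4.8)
This is the two-function form of the classical Riesz rearrangement inequality: since, for fixed $z=x-y$, the kernel $z\mapsto H(|z|)$ is nonnegative, radial and radially nonincreasing with $H(|z|)\to0$ as $|z|\to\infty$, the statement is the three-function Brascamp--Lieb--Luttinger inequality applied with the third factor already equal to its own symmetric decreasing rearrangement, and one could simply quote that result (and Burchard's theorem on cases of equality). The plan for a self-contained argument is the usual three-step scheme: reduce to an inequality for characteristic functions of sets, show that set inequality does not decrease under Steiner symmetrization, and pass to the limit of iterated Steiner symmetrizations.

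First I would use the layer-cake representation $f(x)=\int_0^{\infty}\mathbf{1}_{\{f>\tau\}}(x)\,{\rm d}\tau$ (legitimate because $f$ vanishes at infinity, so every superlevel set has finite measure), the analogous formula for $g$, and the identity $\{f>\tau\}^{\ast}=\{f^{\ast}>\tau\}$; likewise I would write $H(|z|)=\int_0^{\infty}\mathbf{1}_{\{H>\sigma\}}(|z|)\,{\rm d}\sigma$ and observe that each set $\{t\ge0:H(t)>\sigma\}$ is a bounded interval $[0,\rho(\sigma))$, so that $\mathbf{1}_{\{H(|z|)>\sigma\}}=\mathbf{1}_{B_{\rho(\sigma)}}(z)$ with $B_{\rho}$ the centered ball, which coincides with its own rearrangement. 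By Tonelli's theorem the asserted inequality then reduces to the three-set inequality
\begin{equation*}
\int_{\mathbb{R}^N}\!\int_{\mathbb{R}^N}\mathbf{1}_A(x)\mathbf{1}_B(y)\mathbf{1}_C(x-y)\,{\rm d}x\,{\rm d}y\le\int_{\mathbb{R}^N}\!\int_{\mathbb{R}^N}\mathbf{1}_{A^{\ast}}(x)\mathbf{1}_{B^{\ast}}(y)\mathbf{1}_{C^{\ast}}(x-y)\,{\rm d}x\,{\rm d}y
\end{equation*}
for all measurable $A,B,C\subset\mathbb{R}^N$ of finite measure (in fact only $C=C^{\ast}$ a ball is needed). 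Denote by $I(A,B,C)$ the integral on the left.

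Next, writing $x=(x',x_N)\in\mathbb{R}^{N-1}\times\mathbb{R}$ and letting $S$ be the Steiner symmetrization with respect to $\{x_N=0\}$ (so the slice $(SA)_{x'}$ is the centered interval of length $|A_{x'}|$), I would slice the triple integral in the variables $x_N,y_N$ for fixed $x',y'$, apply the one-dimensional three-set inequality to the slices $A_{x'},B_{y'},C_{x'-y'}$ (noting $(C_{x'-y'})^{\ast}=(SC)_{x'-y'}$), and integrate back in $(x',y')$ to obtain $I(A,B,C)\le I(SA,SB,SC)$. Then, after truncating $A,B,C$ to a large ball $B_R$ and recovering the general case as $R\to\infty$ by monotone convergence, I would invoke the classical fact that for a bounded set a suitably chosen sequence of Steiner symmetrizations converges in measure to its Schwarz (ball) rearrangement; since $I$ is continuous under $L^1$-convergence of the indicators (everything staying in a fixed bounded region), iterating the previous step along such a sequence yields $I(A,B,C)\le I(A^{\ast},B^{\ast},C^{\ast})$.

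The hard part is the one-dimensional three-set inequality that drives the Steiner step. I would establish it by first approximating $A,B,C$ in $L^1(\mathbb{R})$ by finite unions of open intervals (using continuity of $I$) and then inducting on the total number of intervals: at each stage one can translate one interval of one of the three sets toward the origin, or merge two overlapping intervals, without decreasing $I$, until all three sets are centered intervals, where equality is achieved; alternatively one quotes Riesz's original one-dimensional argument. The other genuinely technical point is the convergence of iterated Steiner symmetrizations to the ball rearrangement. Finally, for the equality assertion ($H$ strictly decreasing, the integral finite and positive), I would trace the cases of equality through each step: equality in the one-dimensional inequality forces the relevant slices to be intervals, and propagating this back through the Steiner symmetrizations and the layer-cake decomposition forces $A,B,C$, hence $f$ and $g$, to be symmetric decreasing about one common center, i.e. $f=f^{\ast}\circ\sigma$ and $g=g^{\ast}\circ\sigma$ for a single translation $\sigma$; the sharp rigidity needed here is Burchard's theorem on cases of equality in the Riesz rearrangement inequality.
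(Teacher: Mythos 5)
Your sketch is mathematically sound, but note that the paper does not prove this lemma at all: it is stated as a recalled classical fact (Riesz's rearrangement inequality with the equality case of Lieb/Burchard), used later only as a black box in Remark 3.5-type observations about $\int\phi_u^su^2\le\int\phi_{u^\ast}^s(u^\ast)^2$. What you propose is essentially a reconstruction of the standard textbook proof (Lieb--Loss, Theorems 3.7 and 3.9): reduce by the layer-cake formula to the three-set inequality, noting that the superlevel sets of $H(|z|)$ are centered balls because $H$ is nonincreasing and tends to $0$; prove the one-dimensional case by Riesz's interval-sliding argument; propagate it to $\mathbb{R}^N$ via Steiner symmetrization and the convergence of iterated Steiner symmetrizations to the Schwarz rearrangement. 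That route is correct, and its only genuinely nontrivial ingredients are the ones you flag: the convergence of the symmetrization scheme and, above all, the rigidity in the equality case. For the latter, be aware that tracing equality level by level through the layer-cake decomposition only gives, a priori, a translation depending on the levels $(\tau,\tau',\sigma)$; showing the translation is common to all levels is exactly the delicate point, and it is cleaner to quote Lieb's strict rearrangement inequality (the two-function form with $H$ strictly decreasing, Lieb--Loss Theorem 3.9) or Burchard's theorem outright, as you suggest, rather than redo it. In short: your proof is correct and self-contained modulo those two quoted facts, whereas the paper's ``proof'' is simply the citation of the classical result; for the purposes of this paper the citation is entirely adequate, and if you do want a self-contained argument you should make explicit where the common-translation issue is resolved.
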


\subsection{Formulation of Problem \eqref{main}}

By hypotheses $(V_2)$ and $(V_3)$, fractional Sobolev space $H^s(\mathbb{R}^3)$ can be equipped with the inner product
\begin{equation*}
\langle u,v\rangle=\int_{\mathbb{R}^3}\int_{\mathbb{R}^3}\frac{(u(x)-u(y))(v(x)-v(y))}{|x-y|^{3+2s}}\,{\rm d}x\,{\rm d}y+\int_{\mathbb{R}^3}V(x)uv\,{\rm d}x
\end{equation*}
and the corresponding norm
\begin{equation*}
\|u\|=\Big(\int_{\mathbb{R}^3}|(-\Delta )^{\frac{s}{2}}u|^2+V(x)u^2\,{\rm d}x\Big)^{\frac{1}{2}}.
\end{equation*}
Indeed, from the hypotheses $(V_2)$ and $(V_3)$, the above norm is equivalent to the usual norm $\|\cdot\|_{H^s}$. In fact, from $(V_3)$, similar to the proof of Lemma 3.4 in \cite{Jean}, there exists a constant $C>0$ such that
\begin{equation*}
\int_{\mathbb{R}^3}|(-\Delta )^{\frac{s}{2}}u|^2+V(x)u^2\,{\rm d}x\geq\frac{\alpha_0}{2}\int_{\mathbb{R}^3}|u|^2\,{\rm d}x+C\int_{\mathbb{R}^3}|(-\Delta )^{\frac{s}{2}}u|^2\,{\rm d}x
\end{equation*}
By $(V_2)$, we have that
\begin{equation*}
\int_{\mathbb{R}^3}|(-\Delta )^{\frac{s}{2}}u|^2+V(x)u^2\,{\rm d}x\leq\int_{\mathbb{R}^3}|(-\Delta )^{\frac{s}{2}}u|^2\,{\rm d}x+\int_{\mathbb{R}^3}V_{\infty}u^2\,{\rm d}x
\end{equation*}
The above two estimates imply that $\|\cdot\|$ is an equivalent norm on $H^s(\mathbb{R}^3)$.

It is easy to show that problem \eqref{main} can be reduced to a single fractional Schr\"{o}dinger equation with a nonlocal term. Actually, consider $u\in H^{s}(\mathbb{R}^3)$, the linear functional $\mathcal{L}_u$ defined in $\mathcal{D}^{s,2}(\mathbb{R}^3)$ by
\begin{equation*}
\mathcal{L}_u(v)=\int_{\mathbb{R}^3}u^2v\,{\rm d}x,
\end{equation*}
the H\"{o}lder's inequality and \eqref{equ2-1} implies that
\begin{align}\label{equ2-2}
|\mathcal{L}_u(v)|&\leq\Big(\int_{\mathbb{R}^3}|u(x)|^{\frac{12}{3+2s}}\,{\rm d}x\Big)^{\frac{3+2s}{6}}\Big(\int_{\mathbb{R}^3}|v(x)|^{2_s^{\ast}}\,{\rm d}x\Big)^{\frac{1}{2_s^{\ast}}}\nonumber\\
&\leq S_s^{\frac{1}{2}}\Big(\int_{\mathbb{R}^3}|u(x)|^{\frac{12}{3+2s}}\,{\rm d}x\Big)^{\frac{3+2s}{6}}\|v\|_{\mathcal{D}^{s,2}}\leq S_s^{\frac{1}{2}} C\|u\|_{H^s}^2\|v\|_{\mathcal{D}^{s,2}},
\end{align}
where using the following fact that $H^s(\mathbb{R}^3)\hookrightarrow L^{\frac{12}{3+2s}}(\mathbb{R}^3)$ if $s>\frac{1}{2}$. Hence, by the Lax-Milgram theorem, there exists a unique $\phi_u^s\in\mathcal{D}^{s,2}(\mathbb{R}^3)$ such that
\begin{equation}\label{equ2-3}
\int_{\mathbb{R}^3}(-\Delta)^{\frac{s}{2}}\phi_u^s(-\Delta)^{\frac{s}{2}}v\,{\rm d}x=\int_{\mathbb{R}^3}u^2v\,{\rm d}x,\quad \forall v\in\mathcal{D}^{s,2}(\mathbb{R}^3),
\end{equation}
that is $\phi_u^s$ is a weak solution of
\begin{equation}\label{equ2-4}
(-\Delta)^s\phi_u^s=u^2,\quad x\in\mathbb{R}^3
\end{equation}
and the representation formula holds
\begin{equation}\label{equ2-5}
\phi_u^s(x)=c_s\int_{\mathbb{R}^3}\frac{u^2(y)}{|x-y|^{3-2s}}\,{\rm d}y,\quad x\in\mathbb{R}^3,
\end{equation}
which is called $t$-Riesz potential, where
\begin{equation*}
c_s=\pi^{-\frac{3}{2}}2^{-2s}\frac{\Gamma(3-2s)}{\Gamma(s)}.
\end{equation*}
It follows from \eqref{equ2-5} that $\phi_u^s\geq0$ for all $x\in\mathbb{R}^3$. Combining \eqref{equ2-2} and \eqref{equ2-3}, we have
\begin{align*}
\|\phi_u^s\|_{\mathcal{D}^{s,2}}^2&=\int_{\mathbb{R}^3}\phi_u^su^2\,{\rm d}x\leq\Big(\int_{\mathbb{R}^3}|u(x)|^{\frac{12}{3+2s}}\,{\rm d}x\Big)^{\frac{3+2s}{6}}\Big(\int_{\mathbb{R}^3}|\phi_u^s(x)|^{2_s^{\ast}}\,{\rm d}x\Big)^{\frac{1}{2_s^{\ast}}}\\
&\leq S_s^{\frac{1}{2}}\Big(\int_{\mathbb{R}^3}|u(x)|^{\frac{12}{3+2s}}\,{\rm d}x\Big)^{\frac{3+2s}{6}}\|\phi_u^s\|_{\mathcal{D}^{s,2}}\leq S_s^{\frac{1}{2}} C\|u\|_{H^s}^2\|\phi_u^s\|_{\mathcal{D}^{s,2}},
\end{align*}
that is
\begin{equation}\label{equ2-6}
\|\phi_u^s\|_{\mathcal{D}^{s,2}}\leq S_s^{\frac{1}{2}}\Big(\int_{\mathbb{R}^3}|u(x)|^{\frac{12}{3+2s}}\,{\rm d}x\Big)^{\frac{3+2s}{6}}\leq C\|u\|_{H^s}^2,\quad \text{if}\,\, s>\frac{1}{2}.
\end{equation}
Hence, by H\"{o}lder's inequality and \eqref{equ2-6}, we get
\begin{align*}
\int_{\mathbb{R}^3}\phi_u^su^2\,{\rm d}x&\leq S_s^{\frac{1}{2}}\Big(\int_{\mathbb{R}^3}|u(x)|^{\frac{12}{3+2s}}\,{\rm d}x\Big)^{\frac{3+2s}{6}}\|\phi_u^s\|_{\mathcal{D}^{s,2}}\leq C\|u\|_{H^s}^2\|\phi_u^s\|_{\mathcal{D}^{s,2}}\leq C\|u\|_{H^s}^4
\end{align*}
that is
\begin{equation}\label{equ2-7}
\int_{\mathbb{R}^3}\phi_u^su^2\,{\rm d}x\leq C\|u\|_{H^s}^4,\quad\text{if}\,\, s>\frac{1}{2}.
\end{equation}
Substituting $\phi_u^s$ in \eqref{main}, we get the following fractional Schr\"{o}dinger equation
\begin{equation}\label{equ2-8}
(-\Delta)^su+V(x)u+\phi_u^su=|u|^{p-1}u,\quad x\in\mathbb{R}^3,
\end{equation}
whose solutions can be obtained by looking for critical points of the functional $I: H^s(\mathbb{R}^3)\rightarrow\mathbb{R}$ defined by
\begin{equation*}
I(u)=\frac{1}{2}\int_{\mathbb{R}^3}(|(-\Delta)^{\frac{s}{2}}u|^2+V(x)u^2)\,{\rm d}x++\frac{1}{4}\int_{\mathbb{R}^3}\phi_u^su^2\,{\rm d}x-\frac{1}{p+1}\int_{\mathbb{R}^3}|u(x)|^{p+1}\,{\rm d}x.
\end{equation*}
Obviously, $I$ is well defined in $H^s(\mathbb{R}^3)$ and $I\in C^1(H^s(\mathbb{R}^3),\mathbb{R})$, and that
\begin{equation*}
\langle I'(u),v\rangle=\int_{\mathbb{R}^3}\Big((-\Delta)^{\frac{s}{2}}u(-\Delta)^{\frac{s}{2}}v+V(x)uv+\phi_u^suv-|u|^{p-1}uv\Big)\,{\rm d}x.
\end{equation*}
Obviously, the critical points of $I$ are the weak solutions of problem \eqref{equ2-8}.
\begin{definition}\label{def2-1}
(1) We call $(u,\phi)\in H^s(\mathbb{R}^3)\times \mathcal{D}^{s,2}(\mathbb{R}^3)$ is a weak solution of problem \eqref{main} if $u$ is a weak solution of problem \eqref{equ2-8}.\\
(2) We call $u\in H^s(\mathbb{R}^3)$ is a weak solution of \eqref{equ2-8} if
\begin{equation*}
\int_{\mathbb{R}^3}\Big((-\Delta)^{\frac{s}{2}}u(-\Delta)^{\frac{s}{2}}v+V(x)uv+\phi_u^suv-|u|^{p-1}uv\Big)\,{\rm d}x=0,\quad \forall v\in H^s(\mathbb{R}^3).
\end{equation*}
\end{definition}
Let us now define the operator $\Phi:H^s(\mathbb{R}^3)\rightarrow \mathcal{D}^{s,2}(\mathbb{R}^3)$ as
\begin{equation*}
\Phi(u)=\phi_u^s.
\end{equation*}
In the following lemma we summarize some properties of $\Phi$ which can be easily to check, useful to study our problem.

\begin{lemma}\label{lem2-1}
If $s\in(\frac{1}{2},1)$, then for any $u\in H^s(\mathbb{R}^3)$, we have\\
(1) $\Phi$ is continuous;\\
(2) $\Phi$ maps bounded sets into bounded sets;\\
(3) $\Phi(\tau u)=\tau^2\Phi(u)$ for all $\tau\in\mathbb{R}$, $\Phi(u(\cdot+y))=(\Phi(u))(x+y)$;\\
(4) $\Phi(u_{\theta})=\theta^{2s}(\Phi(u))_{\theta}$ for any $\theta>0$, where $u_{\theta}=u(\cdot/\theta)$;\\
(5) If $u_n\rightharpoonup u$ in $H^s(\mathbb{R}^3)$ then $\Phi(u_n)\rightharpoonup\Phi(u)$ in $\mathcal{D}^{s,2}(\mathbb{R}^3)$;\\
(6) If $u_n\rightarrow u$ in $H^s(\mathbb{R}^3)$, then $\Phi(u_n)\rightarrow\Phi(u)$ in $\mathcal{D}^{s,2}(\mathbb{R}^3)$ and $\int_{\mathbb{R}^3}\phi_{u_n}^su_n^2\,{\rm d}x\rightarrow \int_{\mathbb{R}^3}\phi_u^su^2\,{\rm d}x$.
\end{lemma}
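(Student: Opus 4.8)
The plan is to verify the six assertions essentially by unwinding the definition of $\Phi(u)=\phi_u^s$ through the weak formulation \eqref{equ2-3} and the explicit Riesz-potential representation \eqref{equ2-5}, relying on the a priori bound \eqref{equ2-6}. For property (2), boundedness follows immediately from \eqref{equ2-6}: if $\|u\|_{H^s}\leq M$ then $\|\Phi(u)\|_{\mathcal{D}^{s,2}}\leq CM^2$, using $s>\frac12$ so that $H^s(\mathbb R^3)\hookrightarrow L^{\frac{12}{3+2s}}(\mathbb R^3)$. For the homogeneity and translation relations in (3), and the scaling relation in (4), I would simply plug $\tau u$, $u(\cdot+y)$, respectively $u_\theta=u(\cdot/\theta)$ into the defining identity \eqref{equ2-3} (or directly into \eqref{equ2-5}) and change variables; for (4) the extra factor $\theta^{2s}$ appears because $(-\Delta)^s$ scales with weight $\theta^{-2s}$ while the test-function pairing contributes the compensating powers. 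Uniqueness from Lax--Milgram then forces the claimed identities.

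For continuity (1), let $u_n\to u$ in $H^s(\mathbb R^3)$. Subtracting the defining equations for $\phi_{u_n}^s$ and $\phi_u^s$ and testing with $v=\phi_{u_n}^s-\phi_u^s$ gives
\begin{equation*}
\|\phi_{u_n}^s-\phi_u^s\|_{\mathcal{D}^{s,2}}^2=\int_{\mathbb R^3}(u_n^2-u^2)(\phi_{u_n}^s-\phi_u^s)\,{\rm d}x\leq C\|u_n^2-u^2\|_{(2_s^\ast)'}\|\phi_{u_n}^s-\phi_u^s\|_{\mathcal{D}^{s,2}},
\end{equation*}
so that $\|\phi_{u_n}^s-\phi_u^s\|_{\mathcal{D}^{s,2}}\leq C\|u_n^2-u^2\|_{(2_s^\ast)'}$, and the right side tends to $0$ since $u_n\to u$ in $L^{\frac{12}{3+2s}}(\mathbb R^3)$ implies $u_n^2\to u^2$ in $L^{\frac{6}{3+2s}}(\mathbb R^3)=L^{(2_s^\ast)'}(\mathbb R^3)$ (write $\|u_n^2-u^2\|\leq\|u_n-u\|\,\|u_n+u\|$ via Hölder with conjugate exponents $2$). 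This proves (1) and, combined with Lemma \ref{lem2-1}(2) applied to the bounded sequence, the first half of (6); the convergence $\int\phi_{u_n}^su_n^2\to\int\phi_u^su^2$ in (6) then follows by splitting $\phi_{u_n}^su_n^2-\phi_u^su^2=(\phi_{u_n}^s-\phi_u^s)u_n^2+\phi_u^s(u_n^2-u^2)$ and estimating each term with Hölder, \eqref{equ2-6}, and the $L^{\frac{12}{3+2s}}$-convergence of $u_n$.

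The one genuinely delicate point is (5), weak-to-weak continuity. Given $u_n\rightharpoonup u$ in $H^s(\mathbb R^3)$, the sequence $\{u_n\}$ is bounded, hence $\{\phi_{u_n}^s\}$ is bounded in $\mathcal{D}^{s,2}(\mathbb R^3)$ by (2), so along a subsequence $\phi_{u_n}^s\rightharpoonup\psi$ for some $\psi\in\mathcal{D}^{s,2}(\mathbb R^3)$; I must identify $\psi=\phi_u^s$. Fixing $v\in C_0^\infty(\mathbb R^3)$, the left side $\int(-\Delta)^{\frac s2}\phi_{u_n}^s(-\Delta)^{\frac s2}v\,{\rm d}x\to\int(-\Delta)^{\frac s2}\psi(-\Delta)^{\frac s2}v\,{\rm d}x$ by weak convergence, while on the right side $\int u_n^2 v\,{\rm d}x\to\int u^2v\,{\rm d}x$ because $u_n\to u$ strongly in $L^2_{\mathrm{loc}}(\mathbb R^3)$ on the support of $v$ (Rellich-type compact embedding $H^s\hookrightarrow\hookrightarrow L^2_{\mathrm{loc}}$) and $\{u_n\}$ is bounded in $L^{2_s^\ast}$. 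Hence $\psi$ satisfies \eqref{equ2-3}, and uniqueness gives $\psi=\phi_u^s$; since the limit is independent of the subsequence, the full sequence converges weakly. I expect verifying this local-compactness step carefully (and noting that $C_0^\infty$ is dense in $\mathcal{D}^{s,2}$ so testing against $C_0^\infty$ suffices) to be the main, though still routine, obstacle.
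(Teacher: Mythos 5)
Your proposal is correct; note that the paper itself states Lemma \ref{lem2-1} without proof (remarking only that the properties ``can be easily checked''), and your verification is exactly the standard argument that the paper implicitly relies on: the Lax--Milgram definition \eqref{equ2-3} with the bound \eqref{equ2-6} for (1), (2) and (6), the Riesz representation \eqref{equ2-5} with a change of variables for (3) and (4), and the bounded-subsequence plus local compact embedding plus uniqueness-of-weak-limit argument for (5). No gaps; the only point worth a remark is that in (5) the $L^{2_s^{\ast}}$ bound is not needed once $v$ has compact support, since strong $L^2_{\mathrm{loc}}$ convergence and $L^2$ boundedness already give $\int u_n^2v\,{\rm d}x\rightarrow\int u^2v\,{\rm d}x$.
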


Let us define a function $\Psi:H^s(\mathbb{R}^3)\rightarrow\mathbb{R}$ by
\begin{equation*}
\Psi(u)=\int_{\mathbb{R}^3}\phi_u^s(x)u^2(x)\,{\rm d}x.
\end{equation*}
It is clearly that $\Psi(u(\cdot+y))=\Psi(u)$, for any $y\in\mathbb{R}^3$, $u\in H^s(\mathbb{R}^3)$ and $\Psi$ is weakly lower semi-continuous in $H^s(\mathbb{R}^3)$.

The next lemma shows that the functional $\Psi$ and its derivative $\Psi'$ have the B-L splitting property, which is similar to the
well-known Brezis-Lieb Lemma.

\begin{lemma}\label{lem2-2}
If $u_n\rightharpoonup u$ in $H^s(\mathbb{R}^3)$ with $s\in(\frac{3}{4},1)$ and $u_n\rightarrow u$ a.e. in $\mathbb{R}^3$, then\\
$(i)$ $\Psi(u_n-u)=\Psi(u_n)-\Psi(u)+o(1)$;\\
$(ii)$ $\Psi'(u_n-u)=\Psi'(u_n)-\Psi'(u)+o(1)$ in $(H^s(\mathbb{R}^3))'$.
\end{lemma}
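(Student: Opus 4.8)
The plan is to mimic the classical Brezis--Lieb argument but adapted to the nonlocal Riesz potential $\phi_u^s(x)=c_s\int_{\mathbb{R}^3}\frac{u^2(y)}{|x-y|^{3-2s}}\,dy$. The key structural fact is that $\Psi(u)=c_s\iint_{\mathbb{R}^3\times\mathbb{R}^3}\frac{u^2(x)u^2(y)}{|x-y|^{3-2s}}\,dx\,dy$ is a quartic form built from the bilinear form $D(f,g)=c_s\iint\frac{f(x)g(y)}{|x-y|^{3-2s}}\,dx\,dy$, which by the Hardy--Littlewood--Sobolev inequality is continuous on $L^{\frac{6}{3+2s}}(\mathbb{R}^3)$. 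Since $s\in(\tfrac34,1)$ we have $\tfrac{12}{3+2s}\in(2,2_s^\ast)$, so $H^s(\mathbb{R}^3)\hookrightarrow L^{\frac{12}{3+2s}}(\mathbb{R}^3)$ and the boundedness of $\{u_n\}$ gives a uniform bound on $\|u_n^2\|_{L^{6/(3+2s)}}=\|u_n\|_{L^{12/(3+2s)}}^2$. This integrability is exactly what makes the Brezis--Lieb machinery available for the quantities appearing below.

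For part $(i)$, I would write $v_n=u_n-u$ and expand $\Psi(u_n)=\Psi(v_n+u)$ using the multilinearity of $D$: it produces $\Psi(v_n)+\Psi(u)$ plus a finite number of ``mixed'' terms, each of the form $D(v_n^2,u^2)$, $D(v_n u, v_n u)$, $D(v_n^2, v_n u)$, $D(v_n u, u^2)$ and so on, each carrying at least one factor that is linear in $v_n=u_n-u$. Since $v_n\rightharpoonup 0$ in $H^s(\mathbb{R}^3)$ and $v_n\to 0$ a.e., the standard Brezis--Lieb lemma gives $v_n^2\rightharpoonup 0$ and more precisely that the relevant products tend to zero weakly in $L^{6/(3+2s)}$, while the factors containing $u$ stay fixed in that space; combined with the continuity (indeed weak continuity on bounded sets) of $D(\cdot,\cdot)$ in one slot against a fixed function in the dual slot, every mixed term is $o(1)$. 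The cleanest way to organize this is to first establish the auxiliary claim that $D(v_n^2,w)\to 0$ and $D(v_n u, w)\to 0$ for every fixed $w\in L^{6/(3+2s)}(\mathbb{R}^3)$, using that $v_n u\to 0$ in $L^{6/(3+2s)}$ (this follows from a Brezis--Lieb / Vitali argument since $v_n u\to 0$ a.e. and is uniformly bounded in a slightly higher $L^q$, or directly from $v_n\to 0$ a.e. with $u$ fixed). Then part $(i)$ is immediate.

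For part $(ii)$, I would compute $\langle\Psi'(u),\varphi\rangle = 4\,D(u^2, u\varphi)$ and again expand $\Psi'(u_n)=\Psi'(v_n+u)$ by multilinearity, obtaining $\Psi'(v_n)+\Psi'(u)$ plus mixed terms of the shape $D(v_n^2,u\varphi)$, $D(v_n u, v_n\varphi)$, $D(u^2, v_n\varphi)$, $D(v_n u, u\varphi)$ etc. To prove these go to zero in the operator norm of $(H^s)'$ I need the estimates to be uniform over $\|\varphi\|_{H^s}\le 1$; here I use $\|u\varphi\|_{L^{6/(3+2s)}}\le\|u\|_{L^{12/(3+2s)}}\|\varphi\|_{L^{12/(3+2s)}}\le C\|\varphi\|_{H^s}$ and likewise for $v_n\varphi$, together with HLS, to bound each mixed term by (norm of a slot containing $v_n$ with a factor $u$) times $\|\varphi\|_{H^s}$. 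The genuinely $o(1)$ factor is $\|v_n u\|_{L^{6/(3+2s)}}\to 0$ and $\|v_n^2\|$-type quantities paired against $\|u^2\|$ in a way that forces decay; the terms $D(u^2,v_n\varphi)$ need the weak convergence $v_n\rightharpoonup 0$ upgraded so that $D(u^2,\cdot)$, viewed as an element of $(L^{6/(3+2s)})^\ast$ acting through multiplication by $u$, annihilates $v_n$ in the limit uniformly over bounded $\varphi$ — this is where I expect the main technical obstacle, and it is handled by the compact embedding $H^s(B_R)\hookrightarrow\hookrightarrow L^{12/(3+2s)}(B_R)$ on balls plus a tightness estimate showing the contribution from $|x|>R$ is small uniformly in $n$ and $\varphi$. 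Once the uniform-in-$\varphi$ smallness of every mixed term is in hand, summing them yields $\Psi'(u_n-u)-\Psi'(u_n)+\Psi'(u)\to 0$ in $(H^s(\mathbb{R}^3))'$, completing the proof.
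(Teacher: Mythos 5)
Your overall strategy --- expand the quartic form $D(f,g)=c_s\iint_{\mathbb{R}^3\times\mathbb{R}^3}\frac{f(x)g(y)}{|x-y|^{3-2s}}\,{\rm d}x\,{\rm d}y$ around $u_n=v_n+u$ and kill the mixed terms --- is the same in spirit as the paper's proof, which expands $\Psi(u_n-u)-\Psi(u_n)+\Psi(u)$ into the cross terms $A_n^{(i)}$ (resp.\ $B_n^{(i)}$) and shows each converges to the common limit. For part $(i)$ your reduction is essentially sound and arguably cleaner than the paper's: once you know $v_nu\to0$ strongly in $L^{6/(3+2s)}$ (which does hold, since $|v_n|^{6/(3+2s)}\rightharpoonup0$ in $L^2$ against $|u|^{6/(3+2s)}\in L^2$) and $v_n^2\rightharpoonup0$ weakly in $L^{6/(3+2s)}$, every mixed term dies either by HLS or by pairing against the fixed potential $\phi_u^s\in L^{2_s^{\ast}}$. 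The paper instead proves a.e.\ convergence of the Riesz potentials via the kernel splitting with exponents $p<\frac{3}{3-2s}<q$ (this is where $s>\frac34$ enters) and then uses weak $L^{2_s^{\ast}}$ and $L^2$ pairings.

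The genuine gap is in part $(ii)$, in the mixed term $D(v_n^2,u\varphi)$. There neither factor is small: $\|v_n^2\|_{L^{6/(3+2s)}}$ and $\|u\varphi\|_{L^{6/(3+2s)}}$ are merely bounded, so the norm estimates you invoke (``$\|v_n^2\|$-type quantities paired against $\|u^2\|$ in a way that forces decay'') give boundedness, not decay; and your compactness-on-balls-plus-tightness device is aimed only at $D(u^2,v_n\varphi)$, where tightness is inherited from the \emph{fixed} functions $u$ and $\phi_u^s$. For $D(v_n^2,u\varphi)$ the varying factor $v_n^2$ has no tightness (translating bumps), so this term needs its own argument, uniform in $\|\varphi\|_{H^s}\le1$. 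Two repairs work: either use Cauchy--Schwarz with respect to the positive kernel, $|D(v_n^2,u\varphi)|\le D(v_n^2,u^2)^{1/2}D(v_n^2,\varphi^2)^{1/2}$, where the first factor is $o(1)$ by your part $(i)$ and the second is bounded by $C\|v_n\|_{H^s}^2\|\varphi\|_{H^s}^2$; or bound $|D(v_n^2,u\varphi)|\le\|\phi_{v_n}^su\|_{L^2}\|\varphi\|_{L^2}$ and prove $\|\phi_{v_n}^su\|_{L^2}\to0$ by showing $\phi_{v_n}^s\to0$ a.e.\ (the paper's kernel-splitting argument, again using $s>\frac34$) together with boundedness of $|\phi_{v_n}^s|^2$ in $L^{3/(3-2s)}$ paired against $u^2\in L^{3/(2s)}$. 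This is exactly the kind of work the paper spends on its terms $B_n^{(1)}$ and, with considerably more effort, $B_n^{(2)}$; the real content of $(ii)$ sits at the spot your sketch glosses over. With one of these fixes inserted, your remaining uniform-in-$\varphi$ estimates (via $\|v_nu\|_{L^{6/(3+2s)}}\to0$ and $\|u\varphi\|_{L^{6/(3+2s)}}\le\|u\|_{L^{12/(3+2s)}}\|\varphi\|_{L^{12/(3+2s)}}$) do go through.
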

\begin{proof}
$(i)$ Set
\begin{equation*}
A=\int_{\mathbb{R}^3}\int_{\mathbb{R}^3}\frac{u^2(y)u^2(x)}{|x-y|^{3-2s}}\,{\rm d}x\,{\rm d}y
\end{equation*}
and
\begin{equation*}
A_n^{(1)}=\int_{\mathbb{R}^3}\int_{\mathbb{R}^3}\frac{u_n^2(y)u^2(x)}{|x-y|^{3-2s}}\,{\rm d}x\,{\rm d}y,\quad A_n^{(2)}=\int_{\mathbb{R}^3}\int_{\mathbb{R}^3}\frac{u_n(y)u(y)u_n(x)u(x)}{|x-y|^{3-2s}}\,{\rm d}x\,{\rm d}y,
\end{equation*}
\begin{equation*}
A_n^{(3)}=\int_{\mathbb{R}^3}\int_{\mathbb{R}^3}\frac{u_n^2(y)u_n(x)u(x)}{|x-y|^{3-2s}}\,{\rm d}x\,{\rm d}y,\quad A_n^{(4)}=\int_{\mathbb{R}^3}\int_{\mathbb{R}^3}\frac{u_n(y)u(y)u^2(x)}{|x-y|^{3-2s}}\,{\rm d}x\,{\rm d}y.
\end{equation*}
It is easy to check that
\begin{equation*}
\Psi(u_n-u)-(\Psi(u_n)-\Psi(u))=2A_n^{(1)}+4A_n^{(2)}-4A_n^{(3)}-4A_n^{(4)}+2A
\end{equation*}
and thus it is suffice to show that
\begin{equation}\label{equ2-10}
\lim_{n\rightarrow\infty}A_n^{(i)}=A,\quad i=1,2,3,4.
\end{equation}
Set
\begin{equation}\label{equ2-11}
w_n(x)=\int_{\mathbb{R}^3}\frac{u_n^2(y)}{|x-y|^{3-2s}}\,{\rm d}y,\quad w(x)=\int_{\mathbb{R}^3}\frac{u^2(y)}{|x-y|^{3-2s}}\,{\rm d}y.
\end{equation}
Since $u^2\in L^{\frac{6}{3+2s}}(\mathbb{R}^3)=L^{(2_s^{\ast})'}(\mathbb{R}^3)$ and by (5) of Lemma \ref{lem2-1} it holds $w_n\rightharpoonup w$ in $L^{2_s^{\ast}}(\mathbb{R}^3)$, we conclude that
\begin{equation*}
A_n^{(1)}=\int_{\mathbb{R}^3}w_n(x)u^2(x)\,{\rm d}x\rightarrow\int_{\mathbb{R}^3}w(x)u^2(x)\,{\rm d}x=A.
\end{equation*}

For $i=2$, set
\begin{equation*}
\sigma_n(x)=\int_{\mathbb{R}^3}\frac{u_n(y)u(y)}{|x-y|^{3-2s}}\,{\rm d}y.
\end{equation*}
First we show that $\sigma_n(x)\rightarrow w(x)$ a.e. $x\in\mathbb{R}^3$. Choose $p<\frac{3}{3-2s}$ and $q>\frac{3}{3-2s}$, owing to $s>\frac{3}{4}$, it is easy to check that $2p'\in(2,2_s^{\ast})$ and $2q'\in(2,2_s^{\ast})$, by H\"{o}lder's inequality, we deduce that
\begin{align}\label{equ2-12}
|\sigma_n(x)-w(x)|&\leq\int_{\mathbb{R}^3}\frac{|u_n(y)u(y)-u^2(y)|}{|x-y|^{3-2s}}\,{\rm d}y\nonumber\\
&\leq\|u_n-u\|_{L^{2p'}(B_R(x))}\|u\|_{L^{2p'}(B_R(x))}\Big(\int_{|y-x|<R}\frac{1}{|y-x|^{p(3-2s)}}\,{\rm d}y\Big)^{\frac{1}{p}}\nonumber\\
&+\|u_n-u\|_{L^{2q'}(B_R^c(x))}\|u\|_{L^{2q'}(B_R^c(x))}\Big(\int_{|y-x|\geq R}\frac{1}{|y-x|^{q(3-2s)}}\,{\rm d}y\Big)^{\frac{1}{q}}\nonumber\\
&\leq {\rm o}(1)+C\varepsilon,
\end{align}
which implies that the pointwise convergence. Moreover by the Sobolev embedding, we have
\begin{equation*}
\|\sigma_nu_n\|_{L^2}^2\leq\|\sigma_n\|_{L_{2_s^{\ast}}}^2\|u_n\|_{L^{\frac{3}{s}}}^2\leq C\|u_n\|^4\|u\|^2 \leq C
\end{equation*}
and thus, up to a subsequence, $\sigma_nu_n\rightharpoonup wu$ in $L^2(\mathbb{R}^3)$. Since $u\in L^2(\mathbb{R}^3)$, we get
\begin{equation*}
A_n^{(2)}=\int_{\mathbb{R}^3}\sigma_n(x)u_n(x)u(x)\,{\rm d}x\rightarrow\int_{\mathbb{R}^3}w(x)u^2(x)\,{\rm d}x=A.
\end{equation*}
In a similar way, we can verify \eqref{equ2-10} with $i=3,4$. Thus conclusion $(i)$ is proved.

$(ii)$ For any $\varphi\in H^s(\mathbb{R}^3)$ with $\|\varphi\|\leq1$, we need to show
\begin{equation*}
\langle\Psi'(u_n-u)-\Psi'(u_n)+\Psi'(u),\varphi\rangle\rightarrow 0\quad \text{uniformly with respect to}\,\,\varphi.
\end{equation*}
Indeed, let
\begin{equation*}
B_n^{(1)}=\int_{\mathbb{R}^3}\int_{\mathbb{R}^3}\frac{u_n^2(y)u(x)\varphi(x)}{|y-x|^{3-2s}}\,{\rm d}y\,{\rm d}x,\quad B_n^{(2)}=\int_{\mathbb{R}^3}\int_{\mathbb{R}^3}\frac{u_n(y)u(y)u_n(x)\varphi(x)}{|y-x|^{3-2s}}\,{\rm d}y\,{\rm d}x
\end{equation*}
and
\begin{equation*}
B_n^{(3)}=\int_{\mathbb{R}^3}\int_{\mathbb{R}^3}\frac{u_n(y)u(y)u(x)\varphi(x)}{|y-x|^{3-2s}}\,{\rm d}y\,{\rm d}x,\quad B_n^{(4)}=\int_{\mathbb{R}^3}\int_{\mathbb{R}^3}\frac{u^2(y)u_n(x)\varphi(x)}{|y-x|^{3-2s}}\,{\rm d}y\,{\rm d}x.
\end{equation*}
Then, by computation, we have
\begin{equation*}
\langle\Psi'(u_n-u)-\Psi'(u_n)+\Psi'(u),\varphi\rangle=-B_n^{(1)}-2B_n^{(2)}+2B_n^{(3)}+B_n^{(4)}.
\end{equation*}
We claim
\begin{equation}\label{equ2-13}
\lim_{n\rightarrow\infty}B_n^{(i)}=\int_{\mathbb{R}^3}\int_{\mathbb{R}^3}\frac{u^2(y)u(x)\varphi(x)}{|y-x|^{3-2s}}\,{\rm d}y\,{\rm d}x,\quad i=1,2,3,4,
\end{equation}
uniformly with respect to $\varphi$. We only need to verify $i=1,2$, and $i=3,4$ can be done in a similar way.

First,
\begin{align*}
\Big|B_n^{(1)}-\int_{\mathbb{R}^3}\int_{\mathbb{R}^3}\frac{u^2(y)u(x)\varphi(x)}{|y-x|^{3-2s}}\,{\rm d}y\,{\rm d}x\Big|&\leq\int_{\mathbb{R}^3}|(w_n(x)-w(x))u(x)\varphi(x)|\,{\rm d}x\\
&\leq\|\varphi\|_{L^2}\Big(\int_{\mathbb{R}^3}|w_n(x)-w(x)|^2u^2(x)\,{\rm d}x\Big)^{\frac{1}{2}}
\end{align*}
where $w_n$ and $w$ are defined by \eqref{equ2-11}. From (5) of Lemma \ref{lem2-1} and Sobolev embedding inequality, it follows that
\begin{equation*}
\int_{\mathbb{R}^3}|w_n(x)-w(x)|^{2_s^{\ast}}\,{\rm d}x\leq C\|\Phi(u_n)-\Phi(u)\|_{\mathcal{D}^{s,2}(\mathbb{R}^3)}\leq C.
\end{equation*}
For $R>0$ large enough and $n$ large enough, similar to \eqref{equ2-12}, we have
\begin{align*}
|w_n(x)-w(x)|&\leq\int_{\mathbb{R}^3}\frac{|u_n^2(y)-u^2(y)|}{|x-y|^{3-2s}}\,{\rm d}y\\
&\leq\|u_n-u\|_{L^{2p'}(B_R(x))}\|u_n+u\|_{L^{2p'}(B_R(x))}\Big(\int_{|y-x|<R}\frac{1}{|y-x|^{p(3-2s)}}\,{\rm d}y\Big)^{\frac{1}{p}}\\
&+\|u_n-u\|_{L^{2q'}(B_R^c(x))}\|u_n+u\|_{L^{2q'}(B_R^c(x))}\Big(\int_{|y-x|\geq R}\frac{1}{|y-x|^{q(3-2s)}}\,{\rm d}y\Big)^{\frac{1}{q}}\\
&\leq {\rm o}(1)+C\varepsilon,
\end{align*}
which implies that $w_n(x)\rightarrow w(x)$ a.e. $x\in\mathbb{R}^3$. Therefore, up to a subsequence, $|w_n-w|^2\rightharpoonup0$ in $L^{\frac{3}{3-2s}}(\mathbb{R}^3)$. Since $u^2\in L^{\frac{3}{2s}}(\mathbb{R}^3)$, we deduce that
\begin{equation*}
\int_{\mathbb{R}^3}|w_n(x)-w(x)|^2u^2(x)\,{\rm d}x\rightarrow0,
\end{equation*}
that is, \eqref{equ2-13} holds with $i=1$.

For $i=2$. By H\"{o}lder's inequality, we deduce that
\begin{align*}
\Big|B_n^{(2)}-&\int_{\mathbb{R}^3}\int_{\mathbb{R}^3}\frac{u^2(y)u(x)\varphi(x)}{|y-x|^{3-2s}}\,{\rm d}y\,{\rm d}x\Big|\leq\int_{\mathbb{R}^3}\int_{\mathbb{R}^3}\frac{|(u_n(y)u_n(x)-u(y))u(x))u(y)\varphi(x)|}{|y-x|^{3-2s}}\,{\rm d}x\,{\rm d}y\\
&\leq\|\varphi\|_{L^{\frac{3}{s}}}\int_{\mathbb{R}^3}\Big(\int_{\mathbb{R}^3}\frac{|u_n(y)u_n(x)-u(y)u(x)|^{\frac{3}{3-s}}}{|y-x|^{\frac{3(3-2s)}{3-s}}}\,{\rm d}x\Big)^{\frac{3-s}{3}}|u(y)|\,{\rm d}y\\
&\leq C\int_{\mathbb{R}^3}\Big(\int_{\mathbb{R}^3}\frac{|u_n(y)u_n(x)-u(y)u(x)|^{\frac{3}{3-s}}}{|y-x|^{\frac{3(3-2s)}{3-s}}}\,{\rm d}x\Big)^{\frac{3-s}{3}}|u(y)|\,{\rm d}y.
\end{align*}
Set
\begin{equation*}
\chi_n(y)=\int_{\mathbb{R}^3}\frac{|u_n(y)u_n(x)-u(y)u(x)|^{\frac{3}{3-s}}}{|y-x|^{\frac{3(3-2s)}{3-s}}}\,{\rm d}x.
\end{equation*}
Next we show that $w_n(y)\rightarrow0$ a.e. in $\mathbb{R}^3$.

Now we check
\begin{equation*}
\chi_n(y)\leq|u_n(y)|^{\frac{3}{3-s}}\int_{\mathbb{R}^3}\frac{|u_n(x)-u(x)|^{\frac{3}{3-s}}}{|y-x|^{\frac{3(3-2s)}{3-s}}}\,{\rm d}x+|u_n(y)-u(y)|^{\frac{3}{3-s}}\int_{\mathbb{R}^3}\frac{|u(x)|^{\frac{3}{3-s}}}{|y-x|^{\frac{3(3-2s)}{3-s}}}\,{\rm d}x,
\end{equation*}
and
\begin{align*}
\int_{\mathbb{R}^3}&\frac{|u_n(x)-u(x)|^{\frac{3}{3-s}}}{|y-x|^{\frac{3(3-2s)}{3-s}}}\,{\rm d}x
\leq\Big(\int_{|y-x|\geq R}|u_n(x)-u(x)|^{\frac{3}{3-2s}}\,{\rm d}x\Big)^{\frac{3-2s}{3-s}}\Big(\int_{|y-x|\geq R}\frac{1}{|y-x|^{\frac{3(3-2s)}{s}}}\,{\rm d}x\Big)^{\frac{s}{3-s}}\\
&+\Big(\int_{|y-x|< R}|u_n(x)-u(x)|^{\frac{3\alpha'}{3-s}}\,{\rm d}x\Big)^{\frac{1}{\alpha'}}\Big(\int_{|y-x|<R}\frac{1}{|y-x|^{\frac{3(3-2s)\alpha}{(3-s)}}}\,{\rm d}x\Big)^{\frac{1}{\alpha}},
\end{align*}
where $\frac{1}{\alpha}+\frac{1}{\alpha'}=1$ and $\alpha$ can be chosen by $\frac{2(3-s)}{3}<\alpha<\frac{3-s}{3-2s}$. Let $n\rightarrow\infty$ and then $R\rightarrow\infty$, we get $\int_{\mathbb{R}^3}\frac{|u_n(x)-u(x)|^{\frac{3}{3-s}}}{|y-x|^{\frac{3(3-2s)}{3-s}}}\,{\rm d}x\rightarrow0$, as $n\rightarrow\infty$. Since $u_n\rightarrow u$ a.e. $x\in\mathbb{R}^3$, this leads to $\chi_n(y)\rightarrow 0$ a.e. $x\in\mathbb{R}^3$.

Set
\begin{equation*}
\overline{\chi}_n(y)=\int_{\mathbb{R}^3}\frac{|u_n(x)|^{\frac{3}{3-s}}}{|y-x|^{\frac{3(3-2s)}{3-s}}}\,{\rm d}x,\quad\widetilde{\chi}(y)=\int_{\mathbb{R}^3}\frac{|u(x)|^{\frac{3}{3-s}}}{|y-x|^{\frac{3(3-2s)}{3-s}}}\,{\rm d}x.
\end{equation*}
By Hardy-Littlewood-Sobolev inequality (taking $r=\frac{3-s}{3-2s}$, $p=\frac{2(3-s)}{3}$ and $q=\frac{2(3-s)}{3-2s}$), we get
\begin{equation*}
\|\overline{\chi}_n\|_{\frac{2(3-s)}{3-2s}}\leq\||u_n|^{\frac{3}{3-s}}\|_{\frac{2(3-s)}{3}}=\|u_n\|_2^{\frac{3}{3-s}}\leq C.
\end{equation*}
Similarly, $\widetilde{\chi}\in L^{\frac{2(3-s)}{3-2s}}(\mathbb{R}^3)$. Thus
\begin{align*}
\int_{\mathbb{R}^3}|\chi_n^{\frac{3-s}{3}}(y)|^2\,{\rm d}y&\leq C\int_{\mathbb{R}^3}\Big(|u_n(y)|^2|\overline{\chi}_n(y)|^{\frac{2(3-s)}{3}}+(|u(y)|^2+|u_n(y)|^2)|\widetilde{\chi}(y)|^{\frac{2(3-s)}{3}}\Big)\,{\rm d}y\\
&\leq C\Big(\int_{\mathbb{R}^3}|u_n(y)|^{\frac{3}{s}}\,{\rm d}y\Big)^{\frac{2s}{3}}\Big(\int_{\mathbb{R}^3}|\overline{\chi}_n(y)|^{\frac{2(3-s)}{3-2s}}\,{\rm d}y\Big)^{\frac{3-2s}{3}}\\
&+C \Big[\Big(\int_{\mathbb{R}^3}|u_n(y)|^{\frac{3}{s}}\,{\rm d}y\Big)^{\frac{2s}{3}}+\Big(\int_{\mathbb{R}^3}|u(y)|^{\frac{3}{s}}\,{\rm d}y\Big)^{\frac{2s}{3}}\Big]\Big(\int_{\mathbb{R}^3}|\widetilde{\chi}(y)|^{\frac{2(3-s)}{3-2s}}\,{\rm d}y\Big)^{\frac{3-2s}{3}}\\
&\leq C.
\end{align*}
That is, $\chi_n^{\frac{3-s}{3}}\in L^2(\mathbb{R}^3)$ is bounded and by $\chi_n\rightarrow 0$ a.e. in $\mathbb{R}^3$, we have $\chi_n^{\frac{3-s}{3}}\rightharpoonup0$ in $L^2(\mathbb{R}^3)$. Since $u\in L^2(\mathbb{R}^3)$, $\int_{\mathbb{R}^3}\chi_n^{\frac{3-s}{3}}|u(y)|\,{\rm d}y\rightarrow0$.
\end{proof}

\subsection{Regularity for system \eqref{main}}

Let $u_{+}=\max\{u, 0\}$, $u^{-}=\min\{u,0\}$.

\begin{lemma}\label{lemr-2-1}
Let $u\in H^s(\mathbb{R}^N)$ be a solution of the equation
\begin{equation*}
(-\Delta)^s u=W(u_{+})\quad x\in\mathbb{R}^N
\end{equation*}
with $|W(u)|\leq C(|u|+|u|^p)$ for some $1\leq p\leq 2_s^{\ast}-1$ and $C>0$. Then $u\geq0$ for a.e. $x\in\mathbb{R}^N$.
\end{lemma}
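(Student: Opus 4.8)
The plan is to test the equation against $u^{-}=\min\{u,0\}\le 0$, the negative part of $u$, and exploit the sign structure of the fractional Laplacian under this pairing. Since $u$ solves $(-\Delta)^s u = W(u_{+})$ weakly and $u^{-}\in H^s(\mathbb{R}^N)$ is an admissible test function, I would first write
\begin{equation*}
\int_{\mathbb{R}^N}(-\Delta)^{\frac{s}{2}}u\,(-\Delta)^{\frac{s}{2}}u^{-}\,{\rm d}x=\int_{\mathbb{R}^N}W(u_{+})\,u^{-}\,{\rm d}x.
\end{equation*}
The right-hand side vanishes identically: wherever $u\ge 0$ one has $u^{-}=0$, and wherever $u<0$ one has $u_{+}=0$, so $W(u_{+})=W(0)=0$ there (the growth hypothesis $|W(u)|\le C(|u|+|u|^p)$ forces $W(0)=0$). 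Hence the left-hand side must be zero as well.

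The main point is then the pointwise/bilinear inequality for the fractional Laplacian: writing the Gagliardo form,
\begin{equation*}
\int_{\mathbb{R}^N}(-\Delta)^{\frac{s}{2}}u\,(-\Delta)^{\frac{s}{2}}u^{-}\,{\rm d}x=\frac{1}{C(s)}\iint_{\mathbb{R}^N\times\mathbb{R}^N}\frac{(u(x)-u(y))(u^{-}(x)-u^{-}(y))}{|x-y|^{N+2s}}\,{\rm d}x\,{\rm d}y,
\end{equation*}
one checks the algebraic inequality $(a-b)(a^{-}-b^{-})\ge |a^{-}-b^{-}|^2$ for all real $a,b$ (splitting into the four sign cases for $a,b$; equality fails to be an equality precisely when one of $a,b$ is $\ge 0$ and the other $<0$, where the left side is strictly larger). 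Therefore
\begin{equation*}
0=\int_{\mathbb{R}^N}(-\Delta)^{\frac{s}{2}}u\,(-\Delta)^{\frac{s}{2}}u^{-}\,{\rm d}x\ge \frac{1}{C(s)}\iint_{\mathbb{R}^N\times\mathbb{R}^N}\frac{|u^{-}(x)-u^{-}(y)|^2}{|x-y|^{N+2s}}\,{\rm d}x\,{\rm d}y=\|(-\Delta)^{\frac{s}{2}}u^{-}\|_2^2\ge 0,
\end{equation*}
which forces $\|(-\Delta)^{\frac{s}{2}}u^{-}\|_2=0$, hence $u^{-}$ is constant, and since $u^{-}\in L^2(\mathbb{R}^N)$ the constant is $0$. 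Thus $u^{-}\equiv 0$, i.e. $u\ge 0$ a.e.

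**Expected obstacle.** The only genuinely delicate points are bookkeeping rather than conceptual: (i) justifying that $u^{-}\in H^s(\mathbb{R}^N)$ so that it is a legitimate test function — this follows from the fact that $v\mapsto v^{-}$ is Lipschitz and maps $H^s$ into itself (e.g. via the Gagliardo seminorm and the pointwise bound $|v^{-}(x)-v^{-}(y)|\le |v(x)-v(y)|$); and (ii) the case analysis for the elementary inequality $(a-b)(a^{-}-b^{-})\ge|a^{-}-b^{-}|^2$, which is short but must be done carefully across the sign combinations. No compactness, regularity bootstrap, or the growth exponent restriction $p\le 2_s^{\ast}-1$ is actually needed for the sign conclusion itself — that hypothesis is there only to make $W(u_{+})\,u^{-}$ and the weak formulation meaningful and is not the crux of this particular lemma.
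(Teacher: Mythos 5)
Your proposal is correct and follows essentially the same route as the paper: test against $u_{-}$, observe that $W(u_{+})u_{-}=0$ (the growth bound gives $W(0)=0$), and use the pointwise inequality $(u(x)-u(y))(u_{-}(x)-u_{-}(y))\geq|u_{-}(x)-u_{-}(y)|^2$ in the Gagliardo form to force the seminorm of $u_{-}$ to vanish, hence $u_{-}\equiv0$. The only differences are cosmetic (you spell out $W(0)=0$ and the membership $u_{-}\in H^s$ explicitly).
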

\begin{proof}
Multiplying the above equation by $u_{-}=\min\{u,0\}$ and we integrate over $\mathbb{R}^N$, we obtain
\begin{equation*}
\int_{\mathbb{R}^N}(-\Delta)^su u_{-}\,{\rm d}x=\int_{\mathbb{R}^3}W(u_{+})u_{-}\,{\rm d}x=0.
\end{equation*}
Hence, by integrate by parts we get
\begin{equation*}
\int_{\mathbb{R}^N}\int_{\mathbb{R}^N}\frac{(u(x)-u(y))(u_{-}(x)-u_{-}(y))}{|x-y|^{N+2s}}\,{\rm d}x\,{\rm d}y=0.
\end{equation*}
Observe that
\begin{equation*}
(u(x)-u(y))(u_{-}(x)-u_{-}(y))\geq|u_{-}(x)-u_{-}(y)|^2.
\end{equation*}
In fact, the above inequality is trivial if $u(x)$ and $u(y)$ are both the same symbols. Therefore, we suppose that $u(x)\leq0$ and $u(y)\geq0$ (the symmetric situation is analogous). In this case
\begin{align*}
(u(x)-u(y))(u_{-}(x)-u_{-}(y))=(u(x)-u(y))u(x)&=u(x)^2-u(x)u(y)\\
&\geq|u(x)|^2=|u_{-}(x)-u_{-}(y)|^2.
\end{align*}
Hence
\begin{equation*}
\int_{\mathbb{R}^N}\int_{\mathbb{R}^N}\frac{|u_{-}(x)-u_{-}(y)|^2}{|x-y|^{N+2s}}\,{\rm d}x\,{\rm d}y=0
\end{equation*}
which implies that $u_{-}=0$ for a.e. $x\in\mathbb{R}^N$. Hence, $u(x)\geq0$ for a.e. $x\in\mathbb{R}^N$.
\end{proof}

\begin{lemma}\label{lemr-2-2}(\cite{DMV}, Proposition 4.4.1)
Let $u\in \mathcal{D}^{s,2}(\mathbb{R}^3)$ be a nonnegative solution to the problem
\begin{equation*}
(-\Delta)^s u=f(x,u)\quad \text{in}\,\, \mathbb{R}^N
\end{equation*}
and assume that $|f(x,u)|\leq C(1+|u|^{p})$, for some $1\leq p\leq 2_s^{\ast}-1$ and $C>0$. Then $u\in L^{\infty}(\mathbb{R}^N)$.
\end{lemma}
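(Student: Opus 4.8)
\emph{Proof proposal.} The plan is to derive the $L^\infty$ bound by a Brezis--Kato type bootstrap realised through a Moser iteration adapted to the nonlocal operator $(-\Delta)^s$, using as sole a priori information that $u\in\mathcal D^{s,2}(\mathbb R^N)\hookrightarrow L^{2_s^\ast}(\mathbb R^N)$. The structural observation is that $|f(x,u)|\le C(1+|u|^p)\le h(x)(1+|u|)$ with $h(x)=C(1+|u(x)|^{p-1})$, and that the admissible range $1\le p\le 2_s^\ast-1$, i.e. $0\le p-1\le 2_s^\ast-2=\frac{4s}{N-2s}$, forces $|u|^{p-1}\in L^{2_s^\ast/(p-1)}(\mathbb R^N)$ with exponent $2_s^\ast/(p-1)\ge\frac N{2s}$ (equality exactly when $p=2_s^\ast-1$). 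Hence $h\in L^{N/(2s)}(\mathbb R^N)+L^\infty(\mathbb R^N)$, and, by truncating $|u|^{p-1}$ at a high level and using $|u|^{2_s^\ast}\in L^1(\mathbb R^N)$, the $L^{N/(2s)}$ component of $h$ can be chosen of arbitrarily small norm. This is precisely the borderline Brezis--Kato situation: $u$ is a subsolution of a ``linear'' problem whose coefficient sits in the critical space $L^{N/(2s)}$, so no single H\"older estimate closes the iteration and one must use a decomposition $h=h_1+h_2$ with $\|h_1\|_{L^{N/(2s)}(\mathbb R^N)}<\varepsilon$ and $h_2\in L^\infty(\mathbb R^N)$.

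I would run the iteration locally, so as to sidestep the global integrability issues coming from the inhomogeneous ``$1$'' in $|f(x,u)|\le C(1+|u|^p)$. Fix $x_0\in\mathbb R^N$ and a cut-off $\eta\in C_c^\infty(B_2(x_0))$ with $\eta\equiv1$ on $B_1(x_0)$, $0\le\eta\le1$; for $\beta\ge0$ and $K>0$ set $u_K=\min\{u,K\}$ and test the equation with $\varphi=\eta^2 u\,u_K^{2\beta}$, which is bounded and belongs to $H^s(\mathbb R^N)$ by a standard truncation argument resting on Lemma \ref{lemp-2-1}. Using an elementary convexity inequality for the Gagliardo bilinear form together with Lemma \ref{lemp-2-1} (to absorb the cut-off), one bounds $\|(-\Delta)^{s/2}(\eta u\,u_K^\beta)\|_2^2$ from above by $C(\beta+1)\int_{\mathbb R^N}h(x)(1+u)\,\eta^2 u\,u_K^{2\beta}\,{\rm d}x$ plus nonlocal remainder terms; the latter stem from the interaction of $B_2(x_0)$ with its complement and are dealt with, via Young's inequality, by combining the finite (and, uniformly in $x_0$, bounded) tail $\int_{\mathbb R^N}\frac{|u(y)|}{(1+|y-x_0|)^{N+2s}}\,{\rm d}y$ of $u$ — partly absorbed on the left, partly collected into the lower-order right-hand side. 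Then I apply the fractional Sobolev inequality \eqref{equ2-1} to $\eta u\,u_K^\beta$, split $h=h_1+h_2$ so that the $h_1$-term is absorbed on the left while the $h_2$-term is dominated by $C(\|h_2\|_\infty+1)\int_{B_2(x_0)}(u^{2\beta+1}+u^{2\beta+2})\,{\rm d}x$, and let $K\to\infty$ by monotone convergence. This produces a recursive estimate controlling $\|u\|_{L^{(2\beta+2)2_s^\ast/2}(B_1(x_0))}$ in terms of $\|u\|_{L^{2\beta+2}(B_2(x_0))}$, with a constant depending only on $\beta$, $\|h_2\|_\infty$, the tail, and $\|u\|_{L^{2_s^\ast}(B_2(x_0))}$ — all bounded uniformly in $x_0$.

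Iterating this scheme along the exponents $q_{j+1}=q_j\,2_s^\ast/2$ starting from $q_0=2_s^\ast$, over slightly shrinking balls and verifying that the resulting product of constants stays finite, yields $\|u\|_{L^\infty(B_{1/2}(x_0))}\le C$ with $C$ independent of $x_0$, hence $u\in L^\infty(\mathbb R^N)$. The main obstacle is this iteration step: in contrast with the local case, the truncation $u\mapsto u_K$ does not commute with $(-\Delta)^s$, so the Dirichlet form of $\eta^2 u\,u_K^{2\beta}$ generates genuinely nonlocal remainder terms that must be controlled through the tail of $u$, and the admissibility of the truncated test functions as well as the exact convexity inequality have to be justified with care in the fractional framework; on top of that, since $p$ is allowed to equal the critical power $2_s^\ast-1$, the coefficient $h$ only lies in the borderline space $L^{N/(2s)}$, which is why the small/large splitting of $h$ — and not a crude H\"older bound — is indispensable. (For strictly subcritical $p$ one may instead avoid truncations altogether and bootstrap the integrability of $u$ through the mapping properties of the Riesz potential of order $2s$ applied to $f(\cdot,u)$ restricted to balls, together with interior regularity of $s$-harmonic functions; but this shortcut stalls exactly at $p=2_s^\ast-1$.)
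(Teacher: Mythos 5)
The paper does not actually prove this lemma: it is imported verbatim from \cite{DMV} (Proposition 4.4.1), so there is no in-paper argument to compare against, and your sketch in substance reconstructs the very Brezis--Kato/Moser scheme on which the cited result rests. The outline is correct: the bound $|f(x,u)|\leq h(x)(1+u)$ with $h=C(1+u^{p-1})$, the observation that $h\in L^{N/(2s)}(\mathbb{R}^N)+L^{\infty}(\mathbb{R}^N)$ with the $L^{N/(2s)}$ piece of arbitrarily small norm (exact borderline at $p=2_s^{\ast}-1$), the truncated test functions $\eta^2u\,u_K^{2\beta}$, and absorption via the Sobolev inequality \eqref{equ2-1} are precisely the standard ingredients. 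Three remarks on the details you would still have to supply. First, the proof in \cite{DMV} works globally with $u\,u_K^{2\beta}$ as test function, which avoids the nonlocal remainder terms altogether; if you prefer the localized version, the uniform-in-$x_0$ tail bound you invoke does hold, by H\"older, because $u\in L^{2_s^{\ast}}(\mathbb{R}^N)$ and $(N+2s)\cdot\frac{2N}{N+2s}=2N>N$. Second --- and this is the only genuinely underdeveloped step --- in the critical case the smallness threshold for $\|h_1\|_{L^{N/(2s)}}$ needed to absorb the $h_1$-term depends on $\beta$, since the constant on the right grows like $\beta+1$; one cannot run the infinite iteration with a single splitting and simply hope the product of constants converges. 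The clean fix is the classical two-stage argument: one application of the splitting at a fixed exponent yields $u\in L^{q_1}$ for some $q_1>2_s^{\ast}$, after which $u^{p-1}\in L^{q_1/(2_s^{\ast}-2)}$ with exponent strictly larger than $N/(2s)$, the coefficient becomes subcritical, and the subsequent Moser iteration closes with constants of the form $C(\beta+1)^{c}$ whose product along the geometric sequence of exponents converges. Third, you use the letter $K$ both for the truncation level of $u$ in the test function and for the splitting level of $h$; these must be independent parameters (the latter chosen first, depending only on $\varepsilon$). None of this invalidates the proposal; with these points made precise it becomes essentially the proof of \cite{DMV}, Proposition 4.4.1.
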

\begin{lemma}\label{lemr-2-3}(\cite{S}, Proposition 2.9)
Let $(-\Delta)^su=h(x)$. Assume that $u\in L^{\infty}(\mathbb{R}^N)$ and $h\in L^{\infty}(\mathbb{R}^N)$.\\
$(i)$ If $2s\leq1$, then $u\in C^{0,\alpha}(\mathbb{R}^N)$ for any $\alpha<2s$. Moreover,
\begin{equation*}
\|u\|_{C^{0,\alpha}}\leq C(\|u\|_{L^{\infty}}+\|h\|_{L^{\infty}})
\end{equation*}
for a constant $C$ depending only on $N$, $s$ and $\alpha$.\\
$(ii)$ If $2s>1$, then $u\in C^{1,\alpha}(\mathbb{R}^N)$ for any $\alpha<2s-1$. Moreover,
\begin{equation*}
\|u\|_{C^{1,\alpha}}\leq C(\|u\|_{L^{\infty}}+\|h\|_{L^{\infty}})
\end{equation*}
for a constant $C$ depending only on $N$, $s$ and $\alpha$.
\end{lemma}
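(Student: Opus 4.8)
The plan is the classical decomposition of $u$, near an arbitrary point, into a Riesz potential carrying the regularity of the datum $h$ plus an $s$-harmonic remainder that is automatically smooth. Throughout we may assume $N>2s$, which covers the case $N=3$, $s<1$ relevant here; the remaining low-dimensional cases require only minor modifications. The first step is to reduce to a local estimate: since $|u(x)-u(y)|\leq 2\|u\|_{L^{\infty}}$ whenever $|x-y|\geq 1$, it suffices to bound the H\"{o}lder seminorm of $u$ on a ball $B_1(x_0)$ (respectively the H\"{o}lder seminorm of $\nabla u$ together with $\|\nabla u\|_{L^{\infty}(B_1(x_0))}$) by $C(\|u\|_{L^{\infty}}+\|h\|_{L^{\infty}})$ with $C$ independent of $x_0$; by translation invariance of $(-\Delta)^s$ we take $x_0=0$.

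Fix a cut-off $\eta\in C_0^{\infty}(B_2)$ with $\eta\equiv 1$ on $B_{3/2}$ and let $v=I_{2s}(\eta h)$ be the Riesz potential of order $2s$, normalised so that $(-\Delta)^s v=\eta h$ in $\mathbb{R}^N$. Since $\eta h\in L^{\infty}(\mathbb{R}^N)$ is supported in $B_2$, the standard kernel estimates for $I_{2s}$ give the following. If $2s\leq 1$, then $v\in C^{0,\alpha}(\mathbb{R}^N)$ for every $\alpha<2s$ (a logarithmic loss occurs only at the integer endpoint $2s=1$, which is exactly why the exponent must be strict) with $\|v\|_{C^{0,\alpha}}\leq C\|\eta h\|_{L^{\infty}}\leq C\|h\|_{L^{\infty}}$. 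If $2s>1$, then differentiating under the integral sign replaces the kernel by one of order $2s-1\in(0,1)$, so $\nabla v\in C^{0,\alpha}(\mathbb{R}^N)$ and hence $v\in C^{1,\alpha}(\mathbb{R}^N)$ for every $\alpha<2s-1$, again with the corresponding norm bounded by $C\|h\|_{L^{\infty}}$. Moreover $v$ is bounded and decays at infinity, so $\|v\|_{L^{\infty}}\leq C\|h\|_{L^{\infty}}$.

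Now set $w=u-v\in L^{\infty}(\mathbb{R}^N)$, so that $\|w\|_{L^{\infty}}\leq\|u\|_{L^{\infty}}+C\|h\|_{L^{\infty}}$ and $(-\Delta)^s w=h-\eta h=0$ in $B_{3/2}$, i.e. $w$ is $s$-harmonic there. By the interior regularity theory for $s$-harmonic functions --- via the Poisson-kernel representation on $B_{3/2}$, whose kernel decays like $|y|^{-N-2s}$, or equivalently via the Caffarelli--Silvestre extension \cite{CS} --- one has $w\in C^{\infty}(B_1)$ with $\|w\|_{C^{k}(B_1)}\leq C_k\int_{\mathbb{R}^N}\frac{|w(y)|}{1+|y|^{N+2s}}\,{\rm d}y\leq C_k\|w\|_{L^{\infty}(\mathbb{R}^N)}$ for every $k\in\mathbb{N}$. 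Writing $u=v+w$ on $B_1$ and adding the two estimates gives the required local bound with constant independent of the base point; covering $\mathbb{R}^N$ by unit balls and using $\|u\|_{L^{\infty}}$ to control far-apart points then yields the global estimates in $(i)$ and $(ii)$.

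The main obstacle is making the two regularity inputs quantitative and uniform: the sharp H\"{o}lder (respectively $C^{1,\alpha}$) estimate for the Riesz potential of a bounded, compactly supported function --- where one must split the difference quotient according to whether $|x-y|$ is smaller or larger than a fixed multiple of $|x-x'|$ and estimate each piece --- and the interior Schauder-type estimate for $s$-harmonic functions, including the control of the nonlocal tail $\int_{\mathbb{R}^N}|w(y)|(1+|y|)^{-N-2s}\,{\rm d}y$ by $\|w\|_{L^{\infty}}$. The cut-off, the additivity of the estimates, and the passage from local to global are routine; this is precisely the content of \cite{S}, Proposition 2.9.
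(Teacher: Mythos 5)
The paper never proves this lemma: it is quoted directly from Silvestre \cite{S} (Proposition 2.9) and used as a black box, so there is no internal argument to compare yours against. Your sketch reconstructs the standard proof behind that citation: cut off the datum, solve $(-\Delta)^s v=\eta h$ by the Riesz potential $I_{2s}(\eta h)$ and read its $C^{0,\alpha}$ (resp.\ $C^{1,\alpha}$) regularity off the kernel of order $2s$ (resp.\ $2s-1$), note that $w=u-v$ satisfies $(-\Delta)^s w=(1-\eta)h=0$ in $B_{3/2}$, hence is $s$-harmonic there and smooth inside with interior estimates controlled by the tail $\int_{\mathbb{R}^N}|w(y)|(1+|y|)^{-N-2s}\,{\rm d}y\leq C\|w\|_{L^\infty}$, and then patch the uniform local bounds together, using $\|u\|_{L^\infty}$ (and the already-controlled $\|\nabla u\|_{L^\infty}$) for points at distance $\geq 1$. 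This outline is correct and is essentially the classical route. Two caveats keep it from being a self-contained proof. First, the two quantitative inputs you defer --- the H\"{o}lder/$C^{1,\alpha}$ estimates for Riesz potentials of bounded compactly supported functions, and the interior derivative estimates for bounded $s$-harmonic functions via the Poisson kernel or the extension of \cite{CS} --- are exactly where the real work lies, and your closing appeal to \cite{S}, Proposition 2.9 is circular if it is meant to supply them. Second, since $u$ is only assumed bounded and solves the equation in a weak sense, applying the Poisson-kernel representation (or any interior estimate) to $w$ on $B_{3/2}$ requires a brief uniqueness/maximum-principle argument for bounded distributional $s$-harmonic functions; this is standard but should be stated. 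With those points filled in your argument proves the lemma; for the purposes of this paper, the citation of \cite{S} is all that is required.
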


If $(u,\phi)\in H^s(\mathbb{R}^3)\times\mathcal{D}^{s,2}(\mathbb{R}^3)$ is a nonnegative solution of problem \eqref{main}, then
\begin{align*}
\phi_u(x)&=\int_{\mathbb{R}^3}\frac{u^2(y)}{|x-y|^{3-2s}}\,{\rm d}y=\int_{|x-y|\geq1}\frac{u^2(y)}{|x-y|^{3-2s}}\,{\rm d}y+\int_{|x-y|<1}\frac{u^2(y)}{|x-y|^{3-2s}}\,{\rm d}y\\
&\leq\|u\|_{L^{2p'}(B_1(x))}\Big(\int_{|y-x|<1}\frac{1}{|y-x|^{p(3-2s)}}\,{\rm d}y\Big)^{\frac{1}{p}}\\
&+\|u\|_{L^{2q'}(B_1^c(x))}\Big(\int_{|y-x|\geq 1}\frac{1}{|y-x|^{q(3-2s)}}\,{\rm d}y\Big)^{\frac{1}{q}}\\
&\leq C\|u\|
\end{align*}
where $p(3-2s)<3$, $2p\in(2,2_s^{\ast})$ and $q(3-2s)>3$, $2q\in(2,2_s^{\ast})$, since $s\in(\frac{3}{4},1)$. The above inequality implies that $\phi\in L^{\infty}(\mathbb{R}^3)$. Hence, we rewrite the first equation in \eqref{main}
\begin{equation*}
(-\Delta)^s u=|u|^{p-1}u-V(x)u-\phi u:= g(x,u),
\end{equation*}
using Lemma \ref{lemr-2-1} and \ref{lemr-2-2} for the equation
\begin{equation*}
(-\Delta)^s u=g(x,u^{+}),
\end{equation*}
we obtain that $u^{+}\in L^{\infty}(\mathbb{R}^3)$. Similarly, we deduce that $u^{-}\in L^{\infty}(\mathbb{R}^3)$. By $(ii)$ of Lemma \ref{lemr-2-3}, for the two equations in system \eqref{main}, we see that $u\in C^{1,\alpha}(\mathbb{R}^3)$ and $\phi\in C^{1,\alpha}(\mathbb{R}^3)$ for any $\alpha<2s-1$. Therefore, $\partial_{x_i}u$ satisfies the equation
\begin{equation*}
(-\Delta)^s(\partial_{x_i}u)=\partial_{x_i}g(x,u),\quad x\in\mathbb{R}^3.
\end{equation*}

If $(x,\nabla V(x))\in L^{\infty}(\mathbb{R}^3)$, applying $(ii)$ of Lemma \ref{lemr-2-3} to $\partial_{x_i}u$ again, it follows that $\partial_{x_i}u\in C^{1,\alpha}(\mathbb{R}^3)$ for any $\alpha<2s-1$. Similarly, we can obtain that $\partial_{x_i}\phi\in C^{1,\alpha}(\mathbb{R}^3)$ for any $\alpha<2s-1$. Consequently, $u,\phi\in C^{2,\alpha}(\mathbb{R}^3)$ for any $\alpha<2s-1$.

If $(x,\nabla V(x))\in L^{\frac{2_s^{\ast}}{2_s^{\ast}-2}}(\mathbb{R}^3)$, from the above statement, we see that $u\in C^{1,\alpha}(\mathbb{R}^3)$ and by the same proof, we obtain that $\phi\in C^{2,\alpha}(\mathbb{R}^3)$ for any $\alpha<2s-1$.

\subsection{Pohozaev identity}
\begin{proposition}\label{pro2-1}
Assume that $(V_1)-(V_2)$ hold. Let $f\in C^1(\mathbb{R},\mathbb{R})$ satisfy that
\begin{equation*}
|f(t)|\leq C(|t|+|t|^p)\quad 1\leq p\leq 2_s^{\ast}-1, \,\, C>0
\end{equation*}
and $(u,\phi)\in H^s(\mathbb{R}^3)\times \mathcal{D}^{s,2}(\mathbb{R}^3)$ be a solution for problem
\begin{equation}\label{equ2-14}
\left\{
  \begin{array}{ll}
    (-\Delta)^su+V(x)u+\phi u=f(u), & \hbox{in $\mathbb{R}^3$,} \\
    (-\Delta)^s\phi=u^2,& \hbox{in $\mathbb{R}^3$.}
  \end{array}
\right.
\end{equation}
Then the Pohozaev identity holds true
\begin{align*}
\frac{3-2s}{2}\int_{\mathbb{R}^3}|(-\Delta)^{\frac{s}{2}}u|^2\,{\rm d}x&+\frac{3+2s}{4}\int_{\mathbb{R}^3}\phi u^2\,{\rm d}x+\frac{3}{2}\int_{\mathbb{R}^3}V(x)u^2\,{\rm d}x\\
&+\frac{1}{2}\int_{\mathbb{R}^3}u^2(x\cdot\nabla V(x))\,{\rm d}x=3\int_{\mathbb{R}^3}F(u)\,{\rm d}x,
\end{align*}
where $F(s)=\int_0^sf(t)\,{\rm d}t$.
\end{proposition}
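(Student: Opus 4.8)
The plan is to evaluate the pairing of each equation in \eqref{equ2-14} with the Pohozaev multiplier $x\cdot\nabla u$ (respectively $x\cdot\nabla\phi$) in two different ways. By the regularity and decay properties established above, $u,\phi\in C^{2,\alpha}(\mathbb{R}^3)$, $\phi\in L^{\infty}(\mathbb{R}^3)$, $u$ decays polynomially in the spirit of \cite{FL}, and $\nabla\phi(x)=-c_s(3-2s)\int_{\mathbb{R}^3}u^2(y)(x-y)|x-y|^{2s-5}\,{\rm d}y$ decays like $|x|^{2s-4}$; together with $(V_1)$--$(V_2)$ this guarantees absolute convergence of all the integrals that appear below and lets every boundary term at infinity be discarded. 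Since $x\cdot\nabla u\notin H^s(\mathbb{R}^3)$ in general, I would first test the equation against $\varphi_R(x)(x\cdot\nabla u)$ for a standard cut-off $\varphi_R=\varphi(\cdot/R)$, using Lemma \ref{lemp-2-1} to control the commutator terms produced by $\varphi_R$, and then let $R\to\infty$; below I only record the limiting identities.

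The crucial ingredient is the single-equation identity
\[
\int_{\mathbb{R}^3}(x\cdot\nabla v)(-\Delta)^sv\,{\rm d}x=-\frac{3-2s}{2}\int_{\mathbb{R}^3}|(-\Delta)^{\frac{s}{2}}v|^2\,{\rm d}x ,
\]
which I would prove for $v=u$ by means of the Caffarelli--Silvestre extension \cite{CS}. Let $W$ be the $s$-harmonic extension of $v$ to $\mathbb{R}^3\times(0,+\infty)$, so that ${\rm div}(y^{1-2s}\nabla W)=0$, $W(x,0)=v(x)$, and $-\kappa_s\lim_{y\to0^+}y^{1-2s}\partial_yW=(-\Delta)^sv$. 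Multiplying the equation for $W$ by $z\cdot\nabla W$ with $z=(x,y)$, integrating over a half-ball $B_R^+$, and using the elementary identity ${\rm div}(y^{1-2s}z)=(5-2s)y^{1-2s}$, the divergence theorem produces a volume term $\frac{3-2s}{2}\int y^{1-2s}|\nabla W|^2$, a spherical surface term over $\partial B_R^+\cap\{y>0\}$ that vanishes as $R\to+\infty$ by the decay of $W$ and $\nabla W$, and a surface term over $\{y=0\}$; in the latter the pieces carrying $y\partial_yW$ drop out (since $y^{1-2s}\partial_yW$ stays bounded while $y\partial_yW\to0$) and what survives is exactly $\int_{\mathbb{R}^3}(x\cdot\nabla v)(-\Delta)^sv\,{\rm d}x$. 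Using that $\int y^{1-2s}|\nabla W|^2=\kappa_s^{-1}\int_{\mathbb{R}^3}|(-\Delta)^{\frac{s}{2}}v|^2$ with the same $\kappa_s$ appearing in the Neumann condition, that constant cancels and the displayed identity follows. For $v=\phi$ the extension can be avoided altogether: plugging the Riesz representation \eqref{equ2-5} of $\phi$ into $\int_{\mathbb{R}^3}u^2(x\cdot\nabla\phi)\,{\rm d}x$ and symmetrizing in $x\leftrightarrow y$ gives directly $\int_{\mathbb{R}^3}u^2(x\cdot\nabla\phi)\,{\rm d}x=-\frac{3-2s}{2}\int_{\mathbb{R}^3}\phi u^2\,{\rm d}x$.

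With this in hand I would apply the identity with $v=u$, substitute $(-\Delta)^su=f(u)-V(x)u-\phi u$, and expand the left-hand side by elementary integrations by parts (all boundary terms at infinity disappearing): $\int(x\cdot\nabla u)f(u)=-3\int F(u)$; $-\int(x\cdot\nabla u)V(x)u=\frac{3}{2}\int Vu^2+\frac{1}{2}\int u^2(x\cdot\nabla V)$, where the last integral is finite by $(V_1)$; and $-\int(x\cdot\nabla u)\phi u=\frac{3}{2}\int\phi u^2+\frac{1}{2}\int u^2(x\cdot\nabla\phi)$. Replacing $\int u^2(x\cdot\nabla\phi)$ by $-\frac{3-2s}{2}\int\phi u^2$ collapses the last expression to $\frac{3+2s}{4}\int\phi u^2$; collecting all contributions and rearranging produces exactly the claimed Pohozaev identity.

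The step I expect to be the main obstacle is making the single-equation identity rigorous, that is, showing that the weighted surface integral over the spherical cap of $\partial B_R^+$ tends to $0$: this needs quantitative decay of $W$ and $\nabla W$ (deduced from the local estimates of Lemma \ref{lemr-2-3}, the boundedness of $(-\Delta)^sv$, and the polynomial decay of $v$), together with careful bookkeeping of the weight $y^{1-2s}$ near $\{y=0\}$. Moreover, the cut-off $\varphi_R$ in $\mathbb{R}^3$ must be carried along simultaneously with the extension argument, so that the limit in the cut-off parameter and the limit in the half-ball radius are taken consistently. Once these points are settled, the assembly in the previous paragraph is routine.
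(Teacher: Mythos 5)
Your proposal is correct in substance but takes a genuinely different route from the paper. You obtain the key scaling identity $\int_{\mathbb{R}^3}(x\cdot\nabla u)(-\Delta)^su\,{\rm d}x=\frac{2s-3}{2}\int_{\mathbb{R}^3}|(-\Delta)^{\frac{s}{2}}u|^2\,{\rm d}x$ through the Caffarelli--Silvestre extension and you handle the Poisson term by symmetrizing the Riesz representation \eqref{equ2-5} in $x\leftrightarrow y$, which gives $\int_{\mathbb{R}^3}u^2(x\cdot\nabla\phi)\,{\rm d}x=-\frac{3-2s}{2}\int_{\mathbb{R}^3}\phi u^2\,{\rm d}x$ directly. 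The paper never passes to the extension: it multiplies the first equation by $x\cdot\nabla u$ and the second by $x\cdot\nabla\phi$ on balls $B_R$, invokes the Ros-Oton--Serra identity (Proposition 1.6 of \cite{OS}) together with Lemma 3.1 of \cite{AM} to produce explicit boundary terms on $\partial B_R$, and then lets $R_n\rightarrow+\infty$ along a sequence on which all the surface integrals vanish, finally using $\int_{\mathbb{R}^3}\phi(-\Delta)^s\phi\,{\rm d}x=\int_{\mathbb{R}^3}\phi u^2\,{\rm d}x$. Your kernel symmetrization for the $\phi$-term is arguably cleaner than the paper's detour through the second equation, your extension argument is the more standard rigorous proof of the fractional scaling identity in the literature, and your final bookkeeping ($\frac{3}{2}-\frac{3-2s}{4}=\frac{3+2s}{4}$, the $V$-term, and $\int(x\cdot\nabla u)f(u)=-3\int F(u)$) reproduces the stated identity exactly.

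The one step you should repair is the appeal to pointwise polynomial decay of $u$ ``in the spirit of \cite{FL}'': that decay is proved there only for the autonomous equation, and nothing in this paper establishes it for nonconstant $V$; the regularity subsection only yields $u,\phi\in L^{\infty}$ with $C^{1,\alpha}$ (or $C^{2,\alpha}$) bounds. You do not actually need it: every quantity whose surface integral you wish to discard ($u^2$, $V(x)u^2$, $\phi u^2$, $F(u)$, and the weighted Dirichlet density $y^{1-2s}|\nabla W|^2$ of the extension) is integrable over the whole space (respectively the half-space), so $\liminf_{R\to\infty}R\int_{\partial B_R}(\cdots)\,{\rm d}\sigma=0$ and you can pass to the limit along a suitable sequence $R_n\rightarrow+\infty$ --- precisely the device the paper uses in \eqref{equ2-21}. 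With that replacement, and with the care you already flag about taking the cut-off limit and the half-ball limit consistently, your argument goes through.
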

\begin{proof}
From the former proof, we see that $u,\phi\in C^{2,\alpha}(\mathbb{R}^3)$ or $u\in C^{1,\alpha}(\mathbb{R}^3)$, $\phi\in C^{2,\alpha}(\mathbb{R}^3)$. Multiplying the first equation of \eqref{equ2-14} by $x\cdot\nabla u$, integrating on $B_R$ and using Proposition 1.6 in \cite{OS} and Lemma 3.1 in \cite{AM}, we obtain
\begin{align}\label{equ2-15}
\int_{B_R}(-\Delta)^su(x\cdot\nabla u)\,{\rm d}x&=\frac{2s-3}{2}\int_{B_R}u(-\Delta)^su\,{\rm d}x-\frac{\Gamma^2(1+s)}{2}\int_{\partial B_R}(\frac{u}{\delta^s})^2(x\cdot\nu)\,{\rm d}\sigma\nonumber\\
&=\frac{2s-3}{2}\int_{B_R}u(-\Delta)^su\,{\rm d}x-\frac{\Gamma^2(1+s)}{2R^{2s}}\int_{\partial B_R}u^2(x\cdot\nu)\,{\rm d}\sigma.
\end{align}
\begin{equation}\label{equ2-16}
\int_{B_R}\phi u(x\cdot\nabla u)\,{\rm d}x=-\frac{1}{2}\int_{B_R}u^2(x\cdot\nabla\phi)\,{\rm d}x-\frac{3}{2}\int_{B_R}\phi u^2\,{\rm d}x+\frac{R}{2}\int_{\partial B_R}\phi u^2\,{\rm d}\sigma.
\end{equation}
\begin{align}\label{equ2-17}
\int_{B_R}V(x) u(x\cdot\nabla u)\,{\rm d}x&=-\frac{1}{2}\int_{B_R}u^2(x\cdot\nabla V(x))\,{\rm d}x-\frac{3}{2}\int_{B_R}V(x) u^2\,{\rm d}x\\
&+\frac{R}{2}\int_{\partial B_R}V(x) u^2\,{\rm d}\sigma.
\end{align}
\begin{equation}\label{equ2-18}
\int_{B_R}f(u)(x\cdot\nabla u)\,{\rm d}x=-3\int_{B_R}F(u)\,{\rm d}x+R\int_{\partial B_R}F(u)\,{\rm d}\sigma.
\end{equation}
Multiplying the second equation of \eqref{equ2-14} by $(x\cdot\nabla \phi)$, and from proposition 1.6 in \cite{OS}, we get
\begin{align}\label{equ2-19}
\int_{B_R}u^2(x\cdot\nabla \phi)\,{\rm d}x=\int_{B_R}(-\Delta)^s\phi(x\cdot\nabla\phi)\,{\rm d}x&=\frac{2s-3}{2}\int_{B_R}\phi (-\Delta)^s\phi\,{\rm d}x\nonumber\\
&-\frac{\Gamma^2(1+s)}{2R^{2s}}\int_{\partial B_R}\phi^2(x\cdot\nu)\,{\rm d}\sigma.
\end{align}
It follows from \eqref{equ2-15},\eqref{equ2-16},\eqref{equ2-17} and \eqref{equ2-18}, that
\begin{align}\label{equ2-20}
&\frac{2s-3}{2}\int_{B_R}u(-\Delta)^su\,{\rm d}x-\frac{1}{2}\int_{B_R}u^2(x\cdot\nabla\phi)\,{\rm d}x-\frac{3}{2}\int_{B_R}\phi u^2\,{\rm d}x\nonumber\\
&-\frac{1}{2}\int_{B_R}u^2(x\cdot\nabla V(x))\,{\rm d}x-\frac{3}{2}\int_{B_R}V(x) u^2\,{\rm d}x+3\int_{B_R}F(u)\,{\rm d}x\nonumber\\
&=\frac{\Gamma^2(1+s)}{2R^{2s}}\int_{\partial B_R}u^2(x\cdot\nu)\,{\rm d}\sigma-\frac{R}{2}\int_{\partial B_R}\phi u^2\,{\rm d}\sigma-\frac{R}{2}\int_{\partial B_R}V(x) u^2\,{\rm d}\sigma\nonumber\\
&+R\int_{\partial B_R}F(u)\,{\rm d}\sigma.
\end{align}
By \eqref{equ2-19} and \eqref{equ2-20}, we deduce that
\begin{align}\label{equ2-21}
&\frac{2s-3}{2}\int_{B_R}u(-\Delta)^su\,{\rm d}x-\frac{2s-3}{4}\int_{B_R}\phi (-\Delta)^s\phi\,{\rm d}x-\frac{3}{2}\int_{B_R}\phi u^2\,{\rm d}x\nonumber\\
&-\frac{1}{2}\int_{B_R}u^2(x\cdot\nabla V(x))\,{\rm d}x-\frac{3}{2}\int_{B_R}V(x) u^2\,{\rm d}x+3\int_{B_R}F(u)\,{\rm d}x\nonumber\\
&=\frac{\Gamma^2(1+s)}{2R^{2s}}\int_{\partial B_R}u^2(x\cdot\nu)\,{\rm d}\sigma-\frac{R}{2}\int_{\partial B_R}\phi u^2\,{\rm d}\sigma-\frac{R}{2}\int_{\partial B_R}V(x) u^2\,{\rm d}\sigma\nonumber\\
&-\frac{\Gamma^2(1+s)}{2R^{2s}}\int_{\partial B_R}\phi^2(x\cdot\nu)\,{\rm d}\sigma+R\int_{\partial B_R}F(u)\,{\rm d}\sigma.
\end{align}
We can find a sequence $R_n\rightarrow+\infty$ such that the right side of \eqref{equ2-21} vanishes, hence we have proved that
\begin{align*}
&\frac{2s-3}{2}\int_{\mathbb{R}^3}u(-\Delta)^su\,{\rm d}x-\frac{2s-3}{4}\int_{\mathbb{R}^3}\phi (-\Delta)^s\phi\,{\rm d}x-\frac{3}{2}\int_{\mathbb{R}^3}\phi u^2\,{\rm d}x\\
&-\frac{1}{2}\int_{\mathbb{R}^3}u^2(x\cdot\nabla V(x))\,{\rm d}x-\frac{3}{2}\int_{\mathbb{R}^3}V(x) u^2\,{\rm d}x+3\int_{\mathbb{R}^3}F(u)\,{\rm d}x=0
\end{align*}
According to the fact
\begin{equation*}
\int_{\mathbb{R}^3}\phi(-\Delta)^s\phi\,{\rm d}x=\int_{\mathbb{R}^3}\phi u^2\,{\rm d}x,
\end{equation*}
therefore, we infer that
\begin{align*}
\frac{2s-3}{2}\int_{\mathbb{R}^3}u(-\Delta)^su\,{\rm d}x-&\frac{2s+3}{4}\int_{\mathbb{R}^3}\phi u^2\,{\rm d}x-\frac{1}{2}\int_{\mathbb{R}^3}u^2(x\cdot\nabla V(x))\,{\rm d}x\\
&-\frac{3}{2}\int_{\mathbb{R}^3}V(x) u^2\,{\rm d}x+3\int_{\mathbb{R}^3}F(u)\,{\rm d}x=0.
\end{align*}
The proof is completed.
\end{proof}

In the end of this section, we recall the well-known concentration-compactness principle of Lions \cite{Lions1,Lions2}.
\begin{proposition}\label{pro2-2}
Let $\rho_n(x)\in L^1(\mathbb{R}^N)$ be a non-negative sequence satifying
\begin{equation*}
\int_{\mathbb{R}^N}\rho_n(x)\,{\rm d}x=l>0.
\end{equation*}
Then there exists a subsequence, still denoted by $\{\rho_n(x)\}$ such that one of the following cases occurs.\\
(i) (compactness) there exists $y_n\in\mathbb{R}^N$, such that for any $\varepsilon>0$, exists $R>0$ such that
\begin{equation*}
\int_{B_R(y_n)}\rho_n(x)\,{\rm d}x\geq l-\varepsilon,\quad n=1,2,\cdots.
\end{equation*}
(ii) (vanishing) for any fixed $R>0$, there holds
\begin{equation*}
\lim_{n\rightarrow\infty}\sup_{y\in\mathbb{R}^N}\int_{B_R(y)}\rho_n(x)\,{\rm d}x=0.
\end{equation*}
(iii) (dichotomy) there exists $\alpha\in(0,l)$ such that for any $\varepsilon>0$, there exists $n_0\geq1$, $\rho_n^{(1)}$, $\rho_n^{(2)}\in L^1(\mathbb{R}^N)$, for $n\geq n_0$, there holds
\begin{equation*}
\|\rho_n-(\rho_n^{(1)}+\rho_n^{(2)})\|_{L^1(\mathbb{R}^N}<\varepsilon,\quad \Big|\int_{\mathbb{R}^N}\rho_n^{(1)}(x)\,{\rm d}x-\alpha\Big|<\varepsilon,\quad \Big|\int_{\mathbb{R}^N}\rho_n^{(2)}(x)\,{\rm d}x-(l-\alpha)\Big|<\varepsilon
\end{equation*}
and
\begin{equation*}
{\rm dist}({\rm supp}\rho_n^{(1)},{\rm supp}\rho_n^{(2)})\rightarrow\infty,\quad \text{as}\,\, n\rightarrow\infty.
\end{equation*}
\end{proposition}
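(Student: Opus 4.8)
The plan is to follow Lions' classical argument based on the \emph{L\'{e}vy concentration functions}. For each $n$ set
\[
Q_n(t)=\sup_{y\in\mathbb{R}^N}\int_{B_t(y)}\rho_n(x)\,{\rm d}x,\qquad t\geq0 .
\]
Each $Q_n$ is nondecreasing, takes values in $[0,l]$, and $Q_n(t)\to l$ as $t\to+\infty$ since $\rho_n\in L^1(\mathbb{R}^N)$ with $\int_{\mathbb{R}^N}\rho_n\,{\rm d}x=l$. As the family $\{Q_n\}$ is uniformly bounded and monotone, Helly's selection theorem furnishes a subsequence (still denoted $\{\rho_n\}$) with $Q_n(t)\to Q(t)$ for every $t\geq0$, where $Q$ is nondecreasing and $0\leq Q\leq l$. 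Put $\alpha:=\lim_{t\to+\infty}Q(t)\in[0,l]$. The three alternatives will correspond exactly to $\alpha=0$, $\alpha=l$, and $0<\alpha<l$.

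If $\alpha=0$, then $Q(t)=0$ for all $t$, hence $\sup_{y}\int_{B_R(y)}\rho_n\,{\rm d}x=Q_n(R)\to0$ for each fixed $R>0$, which is precisely vanishing. If $\alpha=l$, then given $\varepsilon>0$ I pick $R$ with $Q(R)>l-\varepsilon/2$, so $Q_n(R)>l-\varepsilon$ for all large $n$, and choose $y_n$ with $\int_{B_R(y_n)}\rho_n\,{\rm d}x>l-\varepsilon$; the finitely many remaining indices are absorbed by enlarging $R$ (using that $\int_{B_R(0)}\rho_n\,{\rm d}x\to l$ as $R\to\infty$ for each fixed $n$) and choosing $y_n$ freely there. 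This gives the compactness case.

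The substantive case is the dichotomy $0<\alpha<l$, and this is where the main work and the main obstacle lie. Fix $\varepsilon>0$. Since $Q(t)\uparrow\alpha$, choose $R>0$ with $Q(R)>\alpha-\varepsilon$; then $Q_n(R)>\alpha-\varepsilon$ for large $n$, so pick $y_n$ with $\int_{B_R(y_n)}\rho_n\,{\rm d}x>\alpha-\varepsilon$. The delicate point is to produce radii $R_n\to+\infty$ along which $Q_n$ is still pinched below $\alpha+\varepsilon$, even though $Q_n\to Q$ only pointwise: for each integer $j$ one has $Q_n(j)\to Q(j)\leq\alpha$, so there is $N_j$ (increasing in $j$) with $Q_n(j)<\alpha+\varepsilon$ for $n\geq N_j$; setting $R_n:=j$ for $N_j\leq n<N_{j+1}$ yields $R_n\to+\infty$ and $\int_{B_{R_n}(y_n)}\rho_n\,{\rm d}x\leq Q_n(R_n)<\alpha+\varepsilon$ for all large $n$. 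Now define $\rho_n^{(1)}:=\rho_n\,\chi_{B_R(y_n)}$ and $\rho_n^{(2)}:=\rho_n\,\chi_{\mathbb{R}^N\setminus B_{R_n}(y_n)}$. Combining the lower bound on $\int_{B_R(y_n)}\rho_n\,{\rm d}x$ with the upper bound on $\int_{B_{R_n}(y_n)}\rho_n\,{\rm d}x$ gives $\big|\int_{\mathbb{R}^N}\rho_n^{(1)}\,{\rm d}x-\alpha\big|<\varepsilon$, $\big|\int_{\mathbb{R}^N}\rho_n^{(2)}\,{\rm d}x-(l-\alpha)\big|<\varepsilon$, and $\|\rho_n-(\rho_n^{(1)}+\rho_n^{(2)})\|_{L^1}=\int_{B_{R_n}(y_n)}\rho_n\,{\rm d}x-\int_{B_R(y_n)}\rho_n\,{\rm d}x<2\varepsilon$, while ${\rm dist}({\rm supp}\,\rho_n^{(1)},{\rm supp}\,\rho_n^{(2)})\geq R_n-R\to+\infty$; replacing $\varepsilon$ by $\varepsilon/2$ at the outset yields the stated conclusion. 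I expect the only genuine difficulty to be this bookkeeping — extracting $R_n\to+\infty$ compatible with the merely pointwise convergence $Q_n\to Q$, and treating the finitely many initial indices uniformly; everything else follows directly from the monotonicity of the $Q_n$ and Helly's theorem.
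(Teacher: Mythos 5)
The paper itself gives no proof of Proposition \ref{pro2-2} (it is quoted from Lions \cite{Lions1,Lions2}), so your attempt can only be measured against the classical argument, and that is exactly the route you follow: L\'{e}vy concentration functions $Q_n$, Helly's selection theorem, and the trichotomy according to $\alpha=\lim_{t\to\infty}Q(t)$. Your vanishing case is immediate, and your dichotomy case is correct: the diagonal extraction of radii $R_n\to+\infty$ with $Q_n(R_n)<\alpha+\varepsilon$ from the merely pointwise convergence $Q_n\to Q$ is precisely the needed bookkeeping, and the truncations $\rho_n^{(1)}=\rho_n\chi_{B_R(y_n)}$, $\rho_n^{(2)}=\rho_n\chi_{\mathbb{R}^N\setminus B_{R_n}(y_n)}$ satisfy all four requirements of (iii) after the $\varepsilon/2$ adjustment.

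There is, however, a genuine gap in the compactness case. Statement (i) quantifies the centers before $\varepsilon$: a single sequence $\{y_n\}$ must serve for every $\varepsilon>0$, with only $R$ allowed to depend on $\varepsilon$. Your argument fixes $\varepsilon$ first and then chooses $y_n=y_n(\varepsilon)$, which proves only the weaker assertion ``for every $\varepsilon$ there exist centers and a radius''; the stronger form is what is actually invoked later (Lemma \ref{lem3-4} needs one sequence $\{y_n\}$ valid for all $\varepsilon$ simultaneously). The standard repair: fix $\varepsilon_0\in(0,l/2)$ and, as you do, choose $R_0$ and $y_n$ with $\int_{B_{R_0}(y_n)}\rho_n\,{\rm d}x>l-\varepsilon_0>l/2$ for all $n$ (your absorption of the finitely many initial indices applies here). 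Given any $\varepsilon\in(0,l/2)$, your construction produces $R_\varepsilon$ and $z_n$ with $\int_{B_{R_\varepsilon}(z_n)}\rho_n\,{\rm d}x>l-\varepsilon>l/2$; since $\int_{\mathbb{R}^N}\rho_n\,{\rm d}x=l$, the balls $B_{R_0}(y_n)$ and $B_{R_\varepsilon}(z_n)$ cannot be disjoint, so $|y_n-z_n|\leq R_0+R_\varepsilon$, hence $B_{R_\varepsilon}(z_n)\subset B_{R_0+2R_\varepsilon}(y_n)$ and $\int_{B_{R_0+2R_\varepsilon}(y_n)}\rho_n\,{\rm d}x\geq l-\varepsilon$ for the \emph{original} centers $y_n$; for $\varepsilon\geq l/2$ the radius $R_0$ already suffices. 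With this insertion your proof is complete and coincides with the classical one.
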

The vanishing Lemma for fractional Sobolev space is stated as follows.
\begin{lemma}\label{lem2-3}(Vanishing Lemma, \cite{Sec})
Assume that $\{u_n\}$ is bounded in $H^{\alpha}(\mathbb{R}^N)$ and it satisfies
\begin{equation*}
\lim_{n\rightarrow+\infty}\sup_{y\in\mathbb{R}^N}\int_{B_R(y)}|u_n(x)|^2\,{\rm d}x=0
\end{equation*}
where $R>0$. Then $u_n\rightarrow0$ in $L^r(\mathbb{R}^N)$ for every $2<r<2_s^{\ast}$.
\end{lemma}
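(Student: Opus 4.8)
\emph{Proof strategy.} The plan is to reduce the global $L^r$-decay to a sum of estimates over balls of the fixed radius $R$, in the spirit of P.-L.\ Lions' vanishing lemma, with the extra bookkeeping forced by the nonlocal seminorm. Put $\varepsilon_n:=\sup_{y\in\mathbb{R}^N}\int_{B_R(y)}|u_n|^2\,{\rm d}x$, so $\varepsilon_n\to0$ by hypothesis, and fix $r\in(2,2_\alpha^{\ast})$. The first step is a one-ball estimate: choose an auxiliary exponent $\rho$ with $r<\rho<2_\alpha^{\ast}$ and let $t\in(0,1)$ be given by $\frac1r=\frac{1-t}{2}+\frac t\rho$; combining the elementary interpolation inequality $\|v\|_{L^r(B_R(y))}\le\|v\|_{L^2(B_R(y))}^{1-t}\|v\|_{L^\rho(B_R(y))}^{t}$ with the embeddings $H^\alpha(B_R(y))\hookrightarrow L^{2_\alpha^{\ast}}(B_R(y))\hookrightarrow L^{\rho}(B_R(y))$ (the Sobolev constant being independent of $y$ by translation invariance) gives, uniformly in $y$ and $n$,
\begin{equation*}
\int_{B_R(y)}|u_n|^r\,{\rm d}x\le C\Big(\int_{B_R(y)}|u_n|^2\,{\rm d}x\Big)^{\frac{(1-t)r}{2}}\Big(\int_{B_R(y)}|u_n|^2\,{\rm d}x+[u_n]_{\alpha,B_R(y)}^2\Big)^{\frac{tr}{2}},
\end{equation*}
where $[v]_{\alpha,\Omega}^2:=\int_\Omega\int_\Omega|v(x)-v(z)|^2|x-z|^{-N-2\alpha}\,{\rm d}x\,{\rm d}z$. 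Since $\int_{B_R(y)}|u_n|^2\le\varepsilon_n$, the first factor is at most $\varepsilon_n^{(1-t)r/2}$.

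Next I would cover $\mathbb{R}^N$ by balls $\{B_R(y_i)\}_{i\in\mathbb{N}}$ with centres on a sufficiently fine lattice, so that each point of $\mathbb{R}^N$ lies in at most $\kappa=\kappa(N)$ of them; summing the one-ball estimate over $i$, extracting $\varepsilon_n^{(1-t)r/2}$, and invoking the superadditivity of $\tau\mapsto\tau^{tr/2}$ (which is where I need $tr/2\ge1$) gives
\begin{equation*}
\int_{\mathbb{R}^N}|u_n|^r\,{\rm d}x\le C\,\varepsilon_n^{\frac{(1-t)r}{2}}\Big(\sum_i\int_{B_R(y_i)}|u_n|^2\,{\rm d}x+\sum_i[u_n]_{\alpha,B_R(y_i)}^2\Big)^{\frac{tr}{2}}.
\end{equation*}
The finite-overlap property yields $\sum_i\int_{B_R(y_i)}|u_n|^2\le\kappa\|u_n\|_2^2$, while counting overlaps in $\mathbb{R}^N\times\mathbb{R}^N$ (each pair $(x,z)$ lies in at most $\kappa$ of the products $B_R(y_i)\times B_R(y_i)$) bounds $\sum_i[u_n]_{\alpha,B_R(y_i)}^2$ by $\kappa\int_{\mathbb{R}^N}\int_{\mathbb{R}^N}|u_n(x)-u_n(z)|^2|x-z|^{-N-2\alpha}\,{\rm d}x\,{\rm d}z$, which is a fixed multiple of $\|(-\Delta)^{\frac\alpha2}u_n\|_2^2$ by the Gagliardo identity for the seminorm. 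Hence the bracket is $\le C\|u_n\|_{H^\alpha}^2$, which is bounded, and letting $n\to\infty$ forces $\int_{\mathbb{R}^N}|u_n|^r\to0$, i.e.\ $u_n\to0$ in $L^r(\mathbb{R}^N)$.

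I expect the real work to be in two spots. First, one must verify that $\rho\in(r,2_\alpha^{\ast})$ can be chosen so that simultaneously $\rho<2_\alpha^{\ast}$ and $tr/2\ge1$; a short computation shows this is equivalent to $\frac2r<1$ and $\frac N\alpha\big(\frac12-\frac1r\big)<1$, which are exactly $r>2$ and $r<2_\alpha^{\ast}$, so the argument does reach every subcritical exponent — note that interpolating $L^r$ directly between $L^2$ and $L^{2_\alpha^{\ast}}$ only handles $r\ge2+\frac{4\alpha}{N}$, which is why the intermediate $\rho$ is introduced. Second, and this is where the nonlocality genuinely intervenes, unlike in the classical $H^1$ case the localized seminorms $[u_n]_{\alpha,B_R(y_i)}^2$ do \emph{not} add up to $[u_n]_\alpha^2$; one must instead control their sum by a fixed multiple of the full seminorm through the overlap count in the product space. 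The other ingredients — the one-ball interpolation–Sobolev inequality and the construction of the uniformly finite covering — are routine.
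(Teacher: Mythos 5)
Your argument is correct and complete: the one-ball interpolation–Sobolev estimate, the lattice covering with bounded overlap (including the product-space overlap count for the Gagliardo seminorms, which is exactly the point where the nonlocal case needs extra care), and the verification that an auxiliary exponent $\rho\in(r,2_{\alpha}^{\ast})$ with $tr/2\geq1$ exists for every $r\in(2,2_{\alpha}^{\ast})$ all check out. The paper itself gives no proof (it quotes the lemma from \cite{Sec}), and your proof is essentially the standard Lions-type vanishing argument used there, the only stylistic difference being that you treat every subcritical $r$ directly via the intermediate $\rho$ instead of proving convergence for a single exponent and then interpolating against the bounded $L^2$ and $L^{2_{\alpha}^{\ast}}$ norms.
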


\section{The constant potential case}
In this section we will assume that $V$ is a positive constant. Without loss of generalization, we assume that $V\equiv1$. By Proposition \ref{pro2-1}, if $(u,\phi)\in H^s(\mathbb{R}^3)\times \mathcal{D}^{s,2}(\mathbb{R}^3)$ is a solution of \eqref{main}, then  it satisfies the Pohozaev identity
\begin{equation}\label{equ3-1}
\frac{3-2s}{2}\int_{\mathbb{R}^3}|(-\Delta)^{\frac{s}{2}}u|^2\,{\rm d}x+\frac{3}{2}\int_{\mathbb{R}^3}u^2\,{\rm d}x+\frac{3+2s}{4}\int_{\mathbb{R}^3}\phi_u^s u^2\,{\rm d}x=\frac{3}{p+1}\int_{\mathbb{R}^3}|u|^{p+1}\,{\rm d}x.
\end{equation}
For convenience, We denote
\begin{equation*}
\mathcal{P}(u)=\frac{3-2s}{2}\int_{\mathbb{R}^3}|(-\Delta)^{\frac{s}{2}}u|^2\,{\rm d}x+\frac{3}{2}\int_{\mathbb{R}^3}u^2\,{\rm d}x+\frac{3+2s}{4}\int_{\mathbb{R}^3}\phi_u^s u^2\,{\rm d}x-\frac{3}{p+1}\int_{\mathbb{R}^3}|u|^{p+1}\,{\rm d}x.
\end{equation*}
Set $u_{\theta}=\theta^{\alpha}u(\theta^{\beta}x)$, by computation, we deduce that
\begin{equation}\label{equ3-2}
\int_{\mathbb{R}^3}|(-\Delta)^{\frac{s}{2}}u_{\theta}|^2\,{\rm d}x=\theta^{2\alpha-3\beta+2\beta s}\int_{\mathbb{R}^3}|(-\Delta)^{\frac{s}{2}}u|^2\,{\rm d}x,\,\, \int_{\mathbb{R}^3}|u_{\theta}|^2\,{\rm d}x=\theta^{2\alpha-3\beta}\int_{\mathbb{R}^3}|u|^2\,{\rm d}x
\end{equation}
and
\begin{equation}\label{equ3-3}
\int_{\mathbb{R}^3}\phi_{u_{\theta}}^su_{\theta}^2\,{\rm d}x=\theta^{4\alpha-3\beta-2\beta s}\int_{\mathbb{R}^3}\phi_u^s u^2\,{\rm d}x,\,\, \int_{\mathbb{R}^3}|u_{\theta}|^{p+1}\,{\rm d}x=\theta^{\alpha(p+1)-3\beta}\int_{\mathbb{R}^3}|u|^{p+1}\,{\rm d}x.
\end{equation}
We take $\alpha=2s$ and $\beta=1$, then
\begin{align*}
\gamma(\theta)=I(u_{\theta})=&\frac{\theta^{6s-3}}{2}\int_{\mathbb{R}^3}|(-\Delta)^{\frac{s}{2}}u|^2\,{\rm d}x+\frac{\theta^{4s-3}}{2}\int_{\mathbb{R}^3}|u|^2\,{\rm d}x+\frac{\theta^{6s-3}}{4}\int_{\mathbb{R}^3}\phi_u^s u^2\,{\rm d}x\\
&-\frac{\theta^{2s(p+1)-3}}{p+1}\int_{\mathbb{R}^3}|u|^{p+1}\,{\rm d}x.
\end{align*}
If $p\in(2,2_s^{\ast}-1)$, we see that $I(u_{\theta})\rightarrow-\infty$ as $\theta\rightarrow+\infty$. We state this phenomenon as the following Lemma.
\begin{lemma}\label{lem3-1}
Let $p\in(2,2_s^{\ast}-1)$, then $I$ is not bounded from below.
\end{lemma}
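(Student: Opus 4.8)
The plan is to use the scaling family $u_{\theta}=\theta^{2s}u(\theta\,\cdot)$ already introduced just before the statement and simply to compare the powers of $\theta$ appearing in $\gamma(\theta)=I(u_{\theta})$. First I would fix an arbitrary $u\in H^s(\mathbb{R}^3)$ with $u\not\equiv 0$; since each rescaling $u_{\theta}$ again belongs to $H^s(\mathbb{R}^3)$, it suffices to show that $\gamma(\theta)\to-\infty$ as $\theta\to+\infty$, because then $\inf_{H^s(\mathbb{R}^3)}I=-\infty$.

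Reading off the displayed expression for $\gamma(\theta)$, the three ``good'' terms (the fractional Dirichlet energy, the $L^2$ term, and the nonlocal term $\int_{\mathbb{R}^3}\phi_u^su^2$) carry the exponents $6s-3$, $4s-3$ and $6s-3$ respectively, all with nonnegative coefficients, while the nonlinear term carries the exponent $2s(p+1)-3$ with the strictly negative coefficient $-\frac{1}{p+1}\|u\|_{p+1}^{p+1}$; strict negativity is exactly where $u\not\equiv 0$ is used. The key inequality is $2s(p+1)-3>6s-3$, which is equivalent to $p>2$ and hence holds by hypothesis; trivially $2s(p+1)-3>4s-3$ as well, so the nonlinear exponent strictly dominates all the others.

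Concretely, I would factor $\theta^{2s(p+1)-3}$ out of $\gamma(\theta)$ and write, as $\theta\to+\infty$,
\begin{equation*}
\gamma(\theta)=\theta^{2s(p+1)-3}\Big(o(1)-\frac{1}{p+1}\int_{\mathbb{R}^3}|u|^{p+1}\,{\rm d}x\Big),
\end{equation*}
where $o(1)$ gathers the three remaining terms, each a fixed constant times a negative power of $\theta$ (the powers being $(6s-3)-(2s(p+1)-3)<0$ and $(4s-3)-(2s(p+1)-3)<0$). The bracketed quantity tends to the strictly negative number $-\frac{1}{p+1}\|u\|_{p+1}^{p+1}$, while $\theta^{2s(p+1)-3}\to+\infty$ because $2s(p+1)-3>6s-3\ge 6\cdot\frac{3}{4}-3>0$. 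Therefore $\gamma(\theta)\to-\infty$, which proves the lemma. There is no real obstacle here: the whole argument is elementary exponent bookkeeping on top of the scaling identities \eqref{equ3-2}--\eqref{equ3-3} established just above, and the only points requiring a word of care are that $u$ must be nonzero (so that $\|u\|_{p+1}^{p+1}>0$) and that the rescaled functions remain in $H^s(\mathbb{R}^3)$.
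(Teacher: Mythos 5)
Your proof is correct and follows essentially the same route as the paper: the paper obtains the lemma directly from the scaling identities \eqref{equ3-2}--\eqref{equ3-3} with $u_{\theta}=\theta^{2s}u(\theta x)$, observing that the exponent $2s(p+1)-3$ of the nonlinear term dominates $6s-3$ and $4s-3$ precisely because $p>2$, so $I(u_{\theta})\to-\infty$ as $\theta\to+\infty$. Your exponent bookkeeping and the remarks on $u\not\equiv0$ and positivity of $2s(p+1)-3$ are exactly the intended argument.
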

By computation, we get
\begin{align*}
\gamma'(\theta)&=\frac{(6s-3)\theta^{6s-4}}{2}\int_{\mathbb{R}^3}|(-\Delta)^{\frac{s}{2}}u|^2\,{\rm d}x+\frac{(4s-3)\theta^{4s-4}}{2}\int_{\mathbb{R}^3}|u|^2\,{\rm d}x\\
&+\frac{(6s-3)\theta^{6s-4}}{4}\int_{\mathbb{R}^3}\phi_u^s u^2\,{\rm d}x-\frac{(2s(p+1)-3)\theta^{2s(p+1)-4}}{p+1}\int_{\mathbb{R}^3}|u|^{p+1}\,{\rm d}x.
\end{align*}
Therefore,
\begin{align*}
\gamma'(1)=&\frac{(6s-3)}{2}\int_{\mathbb{R}^3}|(-\Delta)^{\frac{s}{2}}u|^2\,{\rm d}x+\frac{4s-3}{2}\int_{\mathbb{R}^3}|u|^2\,{\rm d}x+\frac{(6s-3)}{4}\int_{\mathbb{R}^3}\phi_u^s u^2\,{\rm d}x\\
&-\frac{(2s(p+1)-3)}{p+1}\int_{\mathbb{R}^3}|u|^{p+1}\,{\rm d}x=2s\langle I'(u),u\rangle-\mathcal{P}(u).
\end{align*}
Define $\mathcal{G}:H^s(\mathbb{R}^3)\rightarrow\mathbb{R}$ as
\begin{equation*}
\mathcal{G}(u)=2s\langle I'(u),u\rangle-\mathcal{P}(u).
\end{equation*}
We shall study the functional $I$ on the manifold $\mathcal{M}$ defined as
\begin{equation*}
\mathcal{M}=\{u\in H^s(\mathbb{R}^3)\backslash\{0\}\,\,:\,\, \mathcal{G}(u)=0\}.
\end{equation*}
Clearly, if $u\in H^s(\mathbb{R}^3)$ is a nontrivial critical point of $I$, then $u\in\mathcal{M}$. Hence, if $(u,\phi)\in H^s(\mathbb{R}^3)\times \mathcal{D}^{s,2}(\mathbb{R}^3)$
is a solution of \eqref{main}, then $u\in \mathcal{M}$.

The following Lemma describes the properties of the manifold $\mathcal{M}$.
\begin{lemma}\label{lem3-2}
(1) For any $u\in H^s(\mathbb{R}^3)\backslash\{0\}$, there exists a unique number $\theta_0>0$ such that $u_{\theta}\in\mathcal{M}$. Moreover
\begin{equation*}
I(u_{\theta_0})=\max_{\theta\geq0}I(u_{\theta}).
\end{equation*}
(2) $0\not\in\partial\mathcal{M}$;\\
(3) $I(u)>0$ for all $u\in\mathcal{M}$;\\
(4) $\mathcal{G}'(u)\neq0$, for any $u\in\mathcal{M}$, that is, $\mathcal{M}$ is a $C^1$ manifold;\\
(5) $\mathcal{M}$ is a natural constraint of $I$, that is every critical point of $I|_{\mathcal{M}}$ is a critical point of $I$;\\
(6) There exists a positive constant $C>0$ such that $\|u\|_{p+1}\geq C$, for any $u\in\mathcal{M}$.\\
\end{lemma}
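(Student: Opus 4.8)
The whole lemma is driven by the explicit scaling behaviour of $I$ along the curve $\theta\mapsto u_{\theta}$. From \eqref{equ3-2}--\eqref{equ3-3} one has
$\gamma(\theta)=I(u_{\theta})=(a+c)\theta^{6s-3}+b\theta^{4s-3}-d\theta^{2s(p+1)-3}$
with $a=\tfrac12\|(-\Delta)^{s/2}u\|_2^2>0$, $b=\tfrac12\|u\|_2^2>0$, $c=\tfrac14\Psi(u)\ge0$, $d=\tfrac1{p+1}\|u\|_{p+1}^{p+1}>0$, and a direct computation gives the scaling identity $\mathcal{G}(u_{\theta})=\theta\,\gamma'(\theta)$. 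First I would factor $\gamma'(\theta)=\theta^{6s-4}h(\theta)$, where $h(\theta)=(6s-3)(a+c)+(4s-3)b\,\theta^{-2s}-(2s(p+1)-3)d\,\theta^{2s(p-2)}$. Since $s\in(\tfrac34,1)$ forces $6s-3>0$ and $4s-3>0$, and $p>2$ forces $2s(p+1)-3>0$ and $2s(p-2)>0$, the function $h$ is \emph{strictly decreasing} on $(0,\infty)$ with $h(0^+)=+\infty$ and $h(+\infty)=-\infty$; hence $h$, and therefore $\gamma'$, has a unique zero $\theta_0>0$, $\gamma$ is strictly increasing on $(0,\theta_0)$ and strictly decreasing afterwards, and $u_{\theta}\in\mathcal{M}\iff\gamma'(\theta)=0\iff\theta=\theta_0$. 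As $\gamma(0^+)=0$, this yields $I(u_{\theta_0})=\max_{\theta\ge0}I(u_{\theta})>0$, which is (1).

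For (2), (3) and (6): if $u\in\mathcal{M}$ then $\mathcal{G}(u)=0$ reads
\[
\tfrac{6s-3}{2}\|(-\Delta)^{s/2}u\|_2^2+\tfrac{4s-3}{2}\|u\|_2^2+\tfrac{6s-3}{4}\Psi(u)=\tfrac{2s(p+1)-3}{p+1}\|u\|_{p+1}^{p+1}.
\]
The left side is $\ge c_0\|u\|^2$ (using $6s-3,4s-3>0$ and $\Psi\ge0$), while the right side is $\le C\|u\|_{p+1}^{p+1}\le C'\|u\|^{p+1}$ by the Sobolev embedding; hence $\|u\|\ge\delta>0$ and $\|u\|_{p+1}\ge C>0$ uniformly on $\mathcal{M}$, so $0$ is not in the closure of $\mathcal{M}$ and $0\notin\partial\mathcal{M}$. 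For (3) one can invoke $I(u)=\gamma(1)>0$ (since $\theta_0=1$ for $u\in\mathcal{M}$), or substitute the displayed constraint into $I(u)$ to obtain $I(u)\ge\tfrac{s(p-1)}{2s(p+1)-3}\|u\|_2^2>0$.

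The delicate item is (4), because the obvious choice $v=u$ gives $\langle\mathcal{G}'(u),u\rangle$ without a definite sign when $2<p<3$. Instead I would argue by contradiction: suppose $u\in\mathcal{M}$ and $\mathcal{G}'(u)=0$; then $u$ solves the Euler--Lagrange equation $(6s-3)(-\Delta)^su+(4s-3)u+(6s-3)\phi_u^su=(2s(p+1)-3)|u|^{p-1}u$, which after dividing by $6s-3>0$ is precisely problem \eqref{equ2-14} with the positive constant potential $\tfrac{4s-3}{6s-3}$ and $f(u)=\tfrac{2s(p+1)-3}{6s-3}|u|^{p-1}u$. Hence the regularity bootstrap of Section 2.3 and the Pohozaev identity of Proposition \ref{pro2-1} apply; combining Pohozaev, the Nehari-type identity (testing the equation against $u$) and the constraint $\mathcal{G}(u)=0$ — three linear relations among $\|(-\Delta)^{s/2}u\|_2^2$, $\|u\|_2^2$, $\Psi(u)$, $\|u\|_{p+1}^{p+1}$ — yields the single identity $(4s-3)\|u\|_2^2=\tfrac{2(2s(p+1)-3)(2-p)}{p+1}\|u\|_{p+1}^{p+1}$, whose left side is $>0$ while its right side is $<0$ since $p>2$ and $s>\tfrac34$: a contradiction. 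Thus $\mathcal{G}'(u)\ne0$ on $\mathcal{M}$, and since $\mathcal{G}\in C^1$, $\mathcal{M}$ is a $C^1$ manifold.

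Finally (5) follows by the same mechanism. By (4) and the Lagrange multiplier rule, a critical point $u$ of $I|_{\mathcal{M}}$ satisfies $I'(u)=\mu\mathcal{G}'(u)$, i.e. $u$ solves $A(-\Delta)^su+Bu+A\phi_u^su=C|u|^{p-1}u$ with $A=1-\mu(6s-3)$, $B=1-\mu(4s-3)$, $C=1-\mu(2s(p+1)-3)$; the case $A=0$ is excluded at once, since the equation would then force $|u|^{p-1}\equiv B/C<0$ on $\{u\ne0\}$. The Pohozaev computation of Proposition \ref{pro2-1} goes through verbatim for arbitrary constants $A,B,C$, so running Pohozaev $+$ Nehari $+$ the constraint $\mathcal{G}(u)=0$ and eliminating as before gives $(B-A)\tfrac{4s-3}{2}\|u\|_2^2=(C-A)\tfrac{2s(p+1)-3}{p+1}\|u\|_{p+1}^{p+1}$, that is $2s\mu\,\tfrac{4s-3}{2}\|u\|_2^2=2s\mu(2-p)\tfrac{2s(p+1)-3}{p+1}\|u\|_{p+1}^{p+1}$. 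If $\mu\ne0$ the two sides have opposite signs, so $\mu=0$ and $I'(u)=0$, which is (5). The only genuinely delicate point is verifying that the regularity theory and the Pohozaev identity of Section 2 really do apply to these Euler--Lagrange and perturbed equations, and that the three relations among the four positive quantities become inconsistent once the sign information $\|u\|_2^2>0$, $4s-3>0$, $2-p<0$ is inserted; everything else is the routine one-variable analysis of $\gamma$.
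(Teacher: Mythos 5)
Your proposal is correct and follows essentially the same route as the paper: the one-variable analysis of $\gamma(\theta)=I(u_{\theta})$ (existence, uniqueness via a monotone auxiliary function, and the lower bounds giving (1)--(3) and (6)), and for (4)--(5) the combination of the Euler--Lagrange equation with the Nehari identity, the constraint $\mathcal{G}(u)=0$ and the Pohozaev identity of Proposition \ref{pro2-1} to force a sign contradiction unless the Lagrange multiplier vanishes. The only deviations are cosmetic and correct: in (5) you eliminate through the linear combination $2s\cdot\text{Nehari}-\text{Pohozaev}-A\cdot\text{constraint}$, arriving at $(B-A)\tfrac{4s-3}{2}\|u\|_2^2=(C-A)\tfrac{2s(p+1)-3}{p+1}\|u\|_{p+1}^{p+1}$, instead of the paper's Cramer's-rule computation, and you exclude the degenerate multiplier (coefficient of $(-\Delta)^s u$ equal to zero) pointwise rather than via the paper's identity $2sb+s(p+1)(p-2)d=0$; both yield the same contradictions.
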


\begin{proof}
(1) We observe that
\begin{equation*}
u_{\theta}\in\mathcal{M}\,\,\Longleftrightarrow\,\,\theta\gamma'(\theta)=0\,\,\Longleftrightarrow\,\,\gamma'(\theta)=0\quad \text{for some}\,\, \theta>0.
\end{equation*}
Clearly, $\gamma(\theta)$ is positive for small $\theta$ and tends to $-\infty$ as $\theta\rightarrow+\infty$. Since $\gamma'$ is continuous, there exists at least one $\theta_0=\theta_0(u)>0$ such that $\gamma'(\theta)=0$, which means that $u_{\theta}\in\mathcal{M}$.

To show the uniqueness of $\theta_0$, note that $\gamma'(\theta_0)=0$ implies that
\begin{align}\label{equ3-4}
\frac{(6s-3)}{2}\int_{\mathbb{R}^3}&|(-\Delta)^{\frac{s}{2}}u|^2\,{\rm d}x+\frac{(6s-3)}{4}\int_{\mathbb{R}^3}\phi_u^s u^2\,{\rm d}x=\nonumber\\
&\frac{(2s(p+1)-3)\theta^{2s(p+1)-6s}}{p+1}\int_{\mathbb{R}^3}|u|^{p+1}\,{\rm d}x-\frac{(4s-3)\theta^{-2s}}{2}\int_{\mathbb{R}^3}|u|^2\,{\rm d}x\nonumber\\
&:=h(\theta).
\end{align}
Taking the derivative of $h(\theta)$, we have
\begin{align*}
h'(\theta)&=\frac{(2s(p+1)-6s)(2s(p+1)-3)\theta^{2s(p+1)-6s-1}}{p+1}\int_{\mathbb{R}^3}|u|^{p+1}\,{\rm d}x+s(4s-3)\\
&\theta^{-1-2s}\int_{\mathbb{R}^3}|u|^2\,{\rm d}x\\
&=\theta^{-1-2s}\Big(\frac{2s(p-2)(2s(p+1)-3)\theta^{2s(p-1)}}{p+1}\int_{\mathbb{R}^3}|u|^{p+1}\,{\rm d}x+s(4s-3)\\
&\int_{\mathbb{R}^3}|u|^2\,{\rm d}x\Big)>0.
\end{align*}
Therefore, $h(\theta)$ is an increasing function of $\theta$. As a consequence, there exists a unique $\theta_0>0$ such that \eqref{equ3-4} holds true. The uniqueness of $\theta_0$ is verified and (1) is proved.

(2) By the Sobolev embedding theorem, we have
\begin{align*}
\frac{(6s-3)}{2}\int_{\mathbb{R}^3}&|(-\Delta)^{\frac{s}{2}}u|^2\,{\rm d}x+\frac{(4s-3)}{2}\int_{\mathbb{R}^3}u^2\,{\rm d}x+\frac{(6s-3)}{4}\int_{\mathbb{R}^3}\phi_u^s u^2\,{\rm d}x\\
&-\frac{(2s(p+1)-3)}{p+1}\int_{\mathbb{R}^3}|u|^{p+1}\,{\rm d}x\\
&\geq\frac{(4s-3)}{2}\|u\|^2-C\frac{(2s(p+1)-3)}{p+1}\|u\|^{p+1}
\end{align*}
which is strictly positive for $\|u\|$ small. Then $0\not\in\partial\mathcal{M}$.

(3) For any $u\in\mathcal{M}$, let $k=I(u)$ and
\begin{equation*}
a=\frac{1}{2}\int_{\mathbb{R}^3}|(-\Delta)^{\frac{s}{2}}u|^2\,{\rm d}x, \,\, b=\frac{1}{2}\int_{\mathbb{R}^3}|u|^2\,{\rm d}x,\,\, c=\frac{1}{4}\int_{\mathbb{R}^3}\phi_u^s u^2\,{\rm d}x,
\end{equation*}
and
\begin{equation*}
d=\frac{1}{p+1}\int_{\mathbb{R}^3}|u|^{p+1}\,{\rm d}x.
\end{equation*}
Then $a,b,c,d$ are positive and satisfy the following identity
\begin{equation*}
\left\{
  \begin{array}{ll}
    a+b+c-d=k, & \hbox{} \\
    (6s-3)a+(4s-3)b+(6s-3)c-(2s(p+1)-3)d=0, & \hbox{}
  \end{array}
\right.
\end{equation*}
where the first equation comes from the definition of $I(u)$ and the second one is from the Pohozaev identity in Proposition \ref{pro2-1}. From the above relation, we can deduce that
\begin{equation*}
b=\frac{1}{2s}\Big(k(6s-3)-2s(p-2)d)\Big.
\end{equation*}
Since $b>0$ and $p>2$, we deduce that
\begin{equation*}
I(u)=k>\frac{2s(p-2)}{(6s-3)}d>0.
\end{equation*}

(4) By contradiction, suppose that $\mathcal{G}'(u)=0$ for some $u\in\mathcal{M}$. In a weak sense, the equation $\mathcal{G}'(u)=0$ can be written as
\begin{equation*}
\left\{
  \begin{array}{ll}
    (6s-3)(-\Delta)^su+(4s-3)u+(6s-3)\phi u=(2s(p+1)-3)|u|^{p-1}u, & \hbox{} \\
    (-\Delta)^s\phi=u^2. & \hbox{}
  \end{array}
\right.
\end{equation*}
Using the notations defined in (3), we have
\begin{equation*}
\left\{
  \begin{array}{ll}
    (6s-3)a+(4s-3)b+(6s-3)c-(2s(p+1)-3)d=0, & \hbox{} \\
    2(6s-3)a+2(4s-3)b+4(6s-3)c-(p+1)(2s(p+1)-3)d=0, & \hbox{} \\
    (3-2s)(6s-3)a+3(4s-3)b+(3+2s)(6s-3)c-3(2s(p+1)-3)d=0, & \hbox{}
  \end{array}
\right.
\end{equation*}
where the first equation is $u\in\mathcal{M}$, the second one is $\langle \mathcal{G}'(u),u\rangle=0$ and the last one comes from the Pohozaev identity in Proposition \ref{pro2-1}. From the above relation, we deduce that
\begin{equation*}
a=c=\frac{(p-1)(2s(p+1)-3)}{2(6s-3)}d,\,\, (4s-3)b+(p-2)((2s(p+1)-3))d=0,
\end{equation*}
which implies that $a=b=c=d=0$ since $s\in(\frac{3}{4},1)$, we achieve a contradiction with $a,b,c,d>0$. So $\mathcal{G}'(u)\neq0$ for every $u\in\mathcal{M}$ and by the implicit function Theorem, $\mathcal{M}$ is a $C^1$-manifold.

(5) Let $u$ be a critical point of the functional $I$, restricted to the manifold $\mathcal{M}$. By the theorem of Lagrange multipliers, there exists a $\mu\in\mathbb{R}$ such that
\begin{equation*}
I'(u)+\mu\mathcal{G}'(u)=0.
\end{equation*}
We will show that $\mu=0$. Evaluating the linear functional above at $u\in\mathcal{M}$, we obtain
\begin{align*}
\langle I'(u),u\rangle+\mu\langle\mathcal{G}'(u),u\rangle=0
\end{align*}
which is equivalent to
\begin{align*}
&\int_{\mathbb{R}^3}\Big(|(-\Delta)^{\frac{s}{2}} u|^2+|u|^2+\phi_u u^2-|u|^{p+1}\Big)\,{\rm d}x+\mu\Big((6s-3)\int_{\mathbb{R}^3}|(-\Delta)^{\frac{s}{2}}u|^2\,{\rm d}x\\
&+(4s-3)\int_{\mathbb{R}^3}|u|^2\,{\rm d}x+(6s-3)\int_{\mathbb{R}^3}\phi_u^s u^2\,{\rm d}x-(2s(p+1)-3)\int_{\mathbb{R}^3}|u|^{p+1}\,{\rm d}x\Big)\\
&=0.
\end{align*}
The above equality is associated with the systems
\begin{align*}
(-\Delta)^{s}u+u+\phi_uu-|u|^{p-1}u+\mu\Big((6s-3)&(-\Delta)^su+(4s-3)u+(6s-3)\phi_u^su\\
&-(2s(p+1)-3)|u|^{p-1}u\Big)=0
\end{align*}
which can be rewritten as
\begin{align*}
\Big(1+\mu(6s-3)\Big)(-\Delta)^{s}u+\Big(1+\mu(4s-3)&\Big)u+\Big(1+\mu(6s-3)\Big)\phi_u^su\\
&=\Big(1+\mu(2s(p+1)-3)\Big)|u|^{p-1}u.
\end{align*}
The solutions of this equation satisfy the following Pohozaev identity
\begin{align*}
&\frac{(3-2s)\Big(1+\mu(6s-3)\Big)}{2}\int_{\mathbb{R}^3}|(-\Delta)^{\frac{s}{2}}u|^2\,{\rm d}x+\frac{3\Big(1+\mu(4s-3)\Big)}{2}\int_{\mathbb{R}^3}|u|^2\,{\rm d}x\\
&+\frac{(3+2s)\Big(1+\mu(6s-3)\Big)}{4}\int_{\mathbb{R}^3}\phi_u^s u^2\,{\rm d}x=\frac{3\Big(1+\mu(2s(p+1)-3)\Big)}{p+1}\int_{\mathbb{R}^3}|u|^{p+1}\,{\rm d}x.
\end{align*}
Using the notations of (3), recalling that $u\in\mathcal{M}$, by multiplying the above equation by $u$ and integrating, and the Pohozaev identity for the above equation, we get the following linear systems of $a,b,c,d$. Namely
\begin{equation*}
\left\{
  \begin{array}{ll}
   a+b+c-d=k>0, & \hbox{} \\
    (6s-3)a+(4s-3)b+(6s-3)c-(2s(p+1)-3)d=0, & \hbox{}\\
    2(1+\mu(6s-3))a+2(1+\mu(4s-3))b+4(1+\mu(6s-3))c & \hbox{} \\
            -(p+1)(1+\mu(2s(p+1)-3))d=0, & \hbox{} \\
    (3-2s)(1+\mu(6s-3))a+3(1+\mu(4s-3))b+(3+2s)(1+\mu(6s-3))c & \hbox{}\\
-3(1+\mu(2s(p+1)-3))d=0, & \hbox{}
  \end{array}
\right.
\end{equation*}
By computation, the determinant of the coefficient matrix $A$ of the above systems is
\begin{equation*}
{\rm det}(A)=-16\mu s^3(1+\mu(6s-3)(p-1)(p-2).
\end{equation*}
Then
\begin{equation*}
{\rm det}(A)=0\,\,\Longleftrightarrow\,\,p=1,\,\,p=2,\,\,\mu=0,\,\,\mu=-\frac{1}{6s-3}.
\end{equation*}
We will show that $\mu$ must ne equal to zero by excluding the other two possibilities.

$(i)$ If $\mu\neq0$, $\mu\neq-\frac{1}{6s-3}$, the linear systems has a unique solution. Using the Cramer rule, we find the value of $d$:
\begin{equation*}
d=-\frac{3k(2s-1)(4s-3)}{4s^2(p-1)(p-2)}<0,
\end{equation*}
which contradicts with $d>0$.

$(ii)$ Assume that $\mu=-\frac{1}{6s-3}$. In such case, the third equation in the above linear systems changes into the following one
\begin{equation*}
2sb+s(p+1)(p-2)d=0
\end{equation*}
which is also impossible, since both $b$ and $d$ must be positive.

Therefore, $\mu=0$, and as a result, $I'(u)=0$, i.e., $u$ is a critical point of the functional $I$.

(6) For any $u\in\mathcal{M}$, $\mathcal{G}(u)=0$. By the Sobolev embedding inequality, we have
\begin{align*}
0&=\frac{(6s-3)}{2}\int_{\mathbb{R}^3}|(-\Delta)^{\frac{s}{2}}u|^2\,{\rm d}x+\frac{4s-3}{2}\int_{\mathbb{R}^3}|u|^2\,{\rm d}x+\frac{(6s-3)}{4}\int_{\mathbb{R}^3}\phi_u^s u^2\,{\rm d}x\\
&-\frac{(2s(p+1)-3)}{p+1}\int_{\mathbb{R}^3}|u|^{p+1}\,{\rm d}x\\
&\geq \frac{(4s-3)}{2}\|u\|^2-\frac{(2s(p+1)-3)}{p+1}\int_{\mathbb{R}^3}|u|^{p+1}\,{\rm d}x\\
&\geq \frac{(4s-3)}{2}C\|u\|_{p+1}^2-\frac{(2s(p+1)-3)}{p+1}\int_{\mathbb{R}^3}|u|^{p+1}\,{\rm d}x.
\end{align*}
Then
\begin{equation*}
\|u\|_{p+1}\geq \Big(\frac{C(4s-3)(p+1)}{2(2s(p+1)-3)}\Big)^{\frac{1}{p-1}}.
\end{equation*}
\end{proof}
\begin{remark}\label{rem3-1}
The map $H^s(\mathbb{R}^3\backslash\{0\})\rightarrow (0,+\infty): u\rightarrow \theta(u)$ is continuous.\\
In fact, assume $u_n\rightarrow u$ in $H^s(\mathbb{R}^3)$, it is easy to show that $\theta(u_n)$ is bounded and as a result, we may assume that $\theta(u_n)\rightarrow \theta_1$. By the uniqueness of $\theta(u)$, we can obtain that $\theta_1=\theta(u)$.
\end{remark}
By (5) of Lemma \ref{lem3-2}, we can find critical points of $I$ restricted to $\mathcal{M}$. Set
\begin{equation*}
c_1=\inf_{g\in\Gamma}\max_{\theta\in[0,1]}I(g(\theta)),\,\,c_2=\inf_{u\neq0}\max_{\theta\geq0}I(u_{\theta}),\,\, c_3=\inf_{u\in\mathcal{M}}I(u),
\end{equation*}
where
\begin{equation*}
\Gamma=\{g\in C([0,1],H^s(\mathbb{R}^3))\,\,|\,\, g(0)=0,\,\, I(g(1))\leq0, \,\, g(1)\neq0\}.
\end{equation*}

\begin{lemma}\label{lem3-3}
The following equalities hold
\begin{equation*}
c:=c_1=c_2=c_3>0.
\end{equation*}
\end{lemma}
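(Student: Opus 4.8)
The plan is to establish the chain of equalities $c_1 = c_2 = c_3$ and then note positivity follows from part (3) of Lemma~\ref{lem3-2}. I would organize the argument around the scaling path $\theta \mapsto u_\theta = \theta^{2s}u(\theta\,\cdot)$ that was already analyzed: for each fixed $u \neq 0$ this is exactly the curve along which $\gamma(\theta) = I(u_\theta)$ was computed, and Lemma~\ref{lem3-2}(1) tells us $\gamma$ has a unique maximum at $\theta_0(u)$ with $u_{\theta_0} \in \mathcal{M}$.

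\textbf{Step 1: $c_2 = c_3$.} For the inequality $c_3 \le c_2$, given any $u \neq 0$ the point $u_{\theta_0(u)}$ lies in $\mathcal{M}$, and $\max_{\theta \ge 0} I(u_\theta) = I(u_{\theta_0(u)}) \ge c_3$; taking the infimum over $u$ gives $c_2 \ge c_3$. Conversely, if $u \in \mathcal{M}$ then $\theta_0(u) = 1$ by the uniqueness in Lemma~\ref{lem3-2}(1), so $I(u) = I(u_{\theta_0(u)}) = \max_{\theta \ge 0} I(u_\theta) \ge c_2$; taking the infimum over $u \in \mathcal{M}$ gives $c_3 \ge c_2$. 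Hence $c_2 = c_3$.

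\textbf{Step 2: $c_1 \le c_2$.} Fix $u \neq 0$. Since $\gamma(\theta) = I(u_\theta) \to -\infty$ as $\theta \to +\infty$ (Lemma~\ref{lem3-1}, via the explicit exponents $6s-3, 4s-3, 2s(p+1)-3$ with the last being the largest when $p>2$), there is $\theta_1$ large with $I(u_{\theta_1}) \le 0$ and $u_{\theta_1} \neq 0$. Also $u_\theta \to 0$ in $H^s$ as $\theta \to 0^+$ because all three exponents $6s-3, 4s-3, 2s(p+1)-3$ are positive for $s \in (\tfrac34,1)$ and $p>2$, so the path $g(t) = u_{t\theta_1}$ (with $g(0) := 0$) lies in $\Gamma$ after a harmless reparametrization. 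Along this path $\max_{t\in[0,1]} I(g(t)) = \max_{\theta \in [0,\theta_1]} I(u_\theta) = \max_{\theta \ge 0} I(u_\theta)$, since the global max of $\gamma$ is attained at the interior point $\theta_0 \le \theta_1$. Therefore $c_1 \le \max_{\theta \ge 0} I(u_\theta)$ for every $u \neq 0$, and taking the infimum yields $c_1 \le c_2$.

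\textbf{Step 3: $c_1 \ge c_3$ (the main obstacle).} This is the step requiring care. Given any $g \in \Gamma$, I claim the path must cross $\mathcal{M}$, i.e. there is $t^* \in (0,1)$ with $g(t^*) \in \mathcal{M}$, whence $\max_{t} I(g(t)) \ge I(g(t^*)) \ge c_3$. To see the crossing, consider $\mathcal{G}(g(t))$: near $t=0$ we have $\|g(t)\|$ small, so by the estimate in Lemma~\ref{lem3-2}(2) (namely $\mathcal{G}(u) \ge \tfrac{4s-3}{2}\|u\|^2 - C\|u\|^{p+1} > 0$ for small $\|u\|$) we get $\mathcal{G}(g(t)) > 0$ for $t$ small. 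On the other hand at $t=1$ we have $I(g(1)) \le 0$ while Lemma~\ref{lem3-2}(3) shows $I(u) > 0$ for all $u \in \mathcal{M}$, so $g(1) \notin \mathcal{M}$; combining $I(g(1))\le 0$ with the relation $I(u) > \tfrac{2s(p-2)}{6s-3} d(u) > 0$ valid on $\mathcal{M}$ and a sign analysis one checks $\mathcal{G}(g(1)) < 0$ (indeed along the scaling ray through $g(1)$ the unique zero $\theta_0 < 1$, forcing $\theta\gamma'(\theta)<0$ at $\theta=1$, i.e. $\mathcal{G}(g(1))<0$). Since $t \mapsto \mathcal{G}(g(t))$ is continuous and changes sign on $[0,1]$, it vanishes at some $t^* \in (0,1)$, and $g(t^*) \neq 0$ because $\mathcal{G}$ is bounded away from zero near $0$; thus $g(t^*) \in \mathcal{M}$. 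This gives $c_1 \ge c_3$.

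Combining the three steps, $c_1 = c_2 = c_3$, and by Lemma~\ref{lem3-2}(3) this common value is positive, which is the assertion. The delicate point is Step~3, specifically verifying $\mathcal{G}(g(1)) < 0$ from the mere hypothesis $I(g(1)) \le 0$; the cleanest route is the scaling observation that for any $v \neq 0$ with $I(v) \le 0$ the unique maximum point $\theta_0(v)$ of $\theta \mapsto I(v_\theta)$ satisfies $\theta_0(v) \le 1$ (since $I(v_{\theta_0}) \ge I(v_1) = I(v)$ would otherwise be contradicted only if $\theta_0 > 1$, but then $I$ increases past $\theta=1$—one must rule this out using that $I(v_\theta)>0$ for $\theta$ near $0$ and $I(v)\le0$, so the max is $\ge 0 > I(v)$, placing $\theta_0$ on the side where $\gamma$ is already decreasing at... )—more carefully, $\gamma(1) = I(v) \le 0 < \max \gamma$, and since $\gamma$ is increasing then decreasing with a single critical point, $\gamma(1) \le 0$ together with $\gamma(\theta) > 0$ for small $\theta$ forces $1 \ge \theta_0$, hence $\gamma'(1) \le 0$, i.e. $\mathcal{G}(v) = \gamma'(1) \le 0$; and equality $\mathcal{G}(v)=0$ would put $v \in \mathcal{M}$ contradicting $I(v)\le 0 < c_3$, so $\mathcal{G}(g(1)) < 0$ strictly.
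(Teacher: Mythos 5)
Your proposal is correct and follows essentially the same route as the paper: $c_2=c_3$ via the fibering map of Lemma \ref{lem3-2}(1), $c_1\le c_2$ by running the scaling ray $\theta\mapsto u_\theta$ as an admissible path, and $c_1\ge c_3$ by showing every $g\in\Gamma$ must cross $\mathcal{M}$ (the paper deduces $\mathcal{G}(g(1))<0$ from the identity $(6s-3)I(u)=\mathcal{G}(u)+s\int_{\mathbb{R}^3}u^2\,{\rm d}x+\frac{2s(p-2)}{p+1}\int_{\mathbb{R}^3}|u|^{p+1}\,{\rm d}x$ on the set $\{\mathcal{G}\ge0\}$, you from the unimodality of $\theta\mapsto I(u_\theta)$; both work). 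One phrase should be repaired: $\mathcal{G}$ is not ``bounded away from zero near $0$'' (it vanishes at $0$); what you actually need is Lemma \ref{lem3-2}(2), namely $\mathcal{G}>0$ on a small punctured ball, and then taking $t^*$ to be the last parameter with $\mathcal{G}(g(t^*))\ge0$ guarantees $g(t^*)\neq0$, hence $g(t^*)\in\mathcal{M}$.
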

\begin{proof}
The proof is similar to that of Proposition 3.11 in \cite{CS}. We give a detailed proof here for readers; convenience. Lemma \ref{lem3-1} implies that $c_2=c_3$.

From Lemma \ref{lem3-1}, we see that $I(u_{\theta})<0$ for $u\in H^s(\mathbb{R}^3)\backslash\{0\}$ and $\theta$ large enough, we obtain that $c_1\leq c_2$.

On the other hand, for any $\gamma\in \Gamma$, we claim that $\gamma([0,1])\cap\mathcal{M}\neq\emptyset$. In fact, for every $u\in\{u\in H^s(\mathbb{R}^3)\backslash\{0\}\,\,|\,\,\mathcal{G}(u)\geq0\}\cup\{0\}$, by the Sobolev embedding theorem, we have
\begin{align*}
&\frac{(6s-3)}{2}\int_{\mathbb{R}^3}|(-\Delta)^{\frac{s}{2}}u|^2\,{\rm d}x+\frac{4s-3}{2}\int_{\mathbb{R}^3}|u|^2\,{\rm d}x+\frac{(6s-3)}{4}\int_{\mathbb{R}^3}\phi_u^s u^2\,{\rm d}x-\frac{(2s(p+1)-3)}{p+1}\int_{\mathbb{R}^3}|u|^{p+1}\,{\rm d}x\\
&\geq\frac{4s-3}{2}\|u\|^2-C\frac{(2s(p+1)-3)}{p+1}\|u\|^{p+1}
\end{align*}
which implies that there exists a small neighberhood of $0$ such that it is contained in $\{u\in H^s(\mathbb{R}^3)\backslash\{0\}\,\,|\,\,\mathcal{G}(u)\geq0\}\cup\{0\}$. And also for every $u\in\{u\in H^s(\mathbb{R}^3)\backslash\{0\}\,\,|\,\,\mathcal{G}(u)\geq0\}\cup\{0\}$, we have
\begin{align*}
(6s-3)I(u)=\mathcal{G}(u)+s\int_{\mathbb{R}^3}u^2\,{\rm d}x+\frac{2s(p-2)}{p+1}\int_{\mathbb{R}^3}|u|^{p+1}\,{\rm d}x\geq0
\end{align*}
and $I(u)>0$ if $u\neq0$. Hence, for any $\gamma\in\Gamma$ satisfying $\gamma(0)=0$, $I(\gamma(1))\leq0$ and $\gamma(1)\neq0$, the carve $\gamma(\theta)$ must across the manifold $\mathcal{M}$. Therefore, $c_1\geq c_3$.
\end{proof}
\begin{remark}\label{rem3-2}
Let
\begin{equation*}
\widetilde{c_2}=\inf_{u\neq0}\max_{t\geq0}I(tu),\quad \widetilde{c_3}=\inf_{u\in\mathcal{N}}I(u),
\end{equation*}
where $\mathcal{N}$ is the Nehari manifold defined as
\begin{align*}
\mathcal{N}&=\{u\in H^s(\mathbb{R}^3)\backslash\{0\}\,\,\Big|\,\, \int_{\mathbb{R}^3}|(-\Delta)^{\frac{s}{2}}u|^2\,{\rm d}x+\int_{\mathbb{R}^3}|u|^2\,{\rm d}x+\int_{\mathbb{R}^3}\phi_u^su^2\,{\rm d}x=\int_{\mathbb{R}^3}|u|^{p+1}\,{\rm d}x\}.
\end{align*}
Therefore
\begin{equation*}
c=c_1=c_2=c_3=\widetilde{c_2}=\widetilde{c_3}.
\end{equation*}
\end{remark}
\begin{lemma}\label{lem3-4}
Let $\{u_n\}\subset\mathcal{M}$ be a minimizing sequence for $c$ which is give by Lemma \ref{lem3-3}. Then there exists $\{y_n\}\subset\mathbb{R}^3$ such that for any $\varepsilon>0$, there exists an $R>0$ satisfying
\begin{equation*}
\int_{\mathbb{R}^3\backslash B_R(y_n)}\Big(\int_{\mathbb{R}^3}\frac{|u_n(x)-u_n(y)|^2}{|x-y|^{3+2s}}\,{\rm d}y+u_n^2\Big)\,{\rm d}x\leq \varepsilon.
\end{equation*}
\end{lemma}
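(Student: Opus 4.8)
The plan is to run Lions' concentration--compactness principle (Proposition \ref{pro2-2}) on the energy densities
\[
\rho_n(x)=\int_{\mathbb{R}^3}\frac{|u_n(x)-u_n(y)|^2}{|x-y|^{3+2s}}\,{\rm d}y+u_n^2(x),
\]
whose total mass $\ell_n:=\int_{\mathbb{R}^3}\rho_n\,{\rm d}x$ is an equivalent square norm to $\|u_n\|^2$. First I would record that $\{u_n\}$ is bounded in $H^s(\mathbb{R}^3)$: since $\mathcal{G}(u_n)=0$ and $I(u_n)\to c$, the linear relations among $\int|(-\Delta)^{\frac s2}u_n|^2$, $\int u_n^2$, $\int\phi_{u_n}^su_n^2$ and $\int|u_n|^{p+1}$ used in the proof of Lemma \ref{lem3-2}(3) give $\frac1{p+1}\int|u_n|^{p+1}\le\frac{6s-3}{2s(p-2)}I(u_n)$ and then bound the remaining quantities (boundedness of $\int\phi_{u_n}^su_n^2$ also follows from \eqref{equ2-7}). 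By Lemma \ref{lem3-2}(6) one has $\|u_n\|_{p+1}\ge C>0$, so, along a subsequence, $\ell_n\to\ell$ for some $\ell>0$. Applying Proposition \ref{pro2-2} to $\rho_n/\ell_n$ leaves three alternatives, and the conclusion of the lemma is exactly the compactness one: $\int_{B_R(y_n)}\rho_n\ge\ell-\varepsilon$ together with $\int_{\mathbb{R}^3}\rho_n=\ell+o(1)$ yields $\int_{\mathbb{R}^3\setminus B_R(y_n)}\rho_n\le\varepsilon$ for $n$ large, the finitely many remaining indices being absorbed by enlarging $R$. So it suffices to rule out vanishing and dichotomy.

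Vanishing is excluded immediately: since $\rho_n\ge u_n^2$ pointwise, vanishing of $\rho_n$ forces $\sup_{y\in\mathbb{R}^3}\int_{B_R(y)}u_n^2\,{\rm d}x\to0$ for every $R>0$, whence $u_n\to0$ in $L^{p+1}(\mathbb{R}^3)$ by the Vanishing Lemma \ref{lem2-3} (recall $2<p+1<2_s^{\ast}$), contradicting $\|u_n\|_{p+1}\ge C>0$.

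The real obstacle is ruling out dichotomy. From a dichotomy splitting of $\{\rho_n\}$, a standard truncation (cut-off functions $\zeta(\cdot/R_n)$ and $1-\zeta(\cdot/R_n')$ centred at $y_n$, with $\zeta\equiv1$ on $B_1$, ${\rm supp}\,\zeta\subset B_2$, and $R_n\to\infty\ll R_n'$ chosen so the intermediate annulus carries vanishing $\rho_n$-mass) produces $u_n^{(1)},u_n^{(2)}\in H^s(\mathbb{R}^3)$ with ${\rm dist}({\rm supp}\,u_n^{(1)},{\rm supp}\,u_n^{(2)})\to\infty$, $\|u_n^{(1)}+u_n^{(2)}-u_n\|\to0$, and $\|u_n^{(i)}\|$ bounded and bounded away from $0$ (corresponding to $\alpha$ and $\ell-\alpha$ with $\alpha\in(0,\ell)$). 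The delicate point is additivity of the energies: the Gagliardo seminorms of $u_n^{(1)},u_n^{(2)}$ are controlled through the cut-off estimate Lemma \ref{lemp-2-1}, the local terms $\int u_n^2$ and $\int|u_n|^{p+1}$ split up to $o(1)$ by disjointness of supports, and the only genuinely nonlocal term $\Psi(u)=\int\phi_u^su^2$ is additive up to $o(1)$ because its cross contributions are dominated by $\bigl({\rm dist}({\rm supp}\,u_n^{(1)},{\rm supp}\,u_n^{(2)})\bigr)^{-(3-2s)}\|u_n^{(1)}\|_2^2\|u_n^{(2)}\|_2^2\to0$, using $3-2s>0$. Hence, recalling from the proof of Lemma \ref{lem3-3} the reduced functional
\[
\widetilde I(u)=\frac{s}{6s-3}\int_{\mathbb{R}^3}u^2\,{\rm d}x+\frac{2s(p-2)}{(6s-3)(p+1)}\int_{\mathbb{R}^3}|u|^{p+1}\,{\rm d}x\ \ge\ 0,\qquad I=\tfrac1{6s-3}\mathcal{G}+\widetilde I,
\]
which coincides with $I$ on $\mathcal{M}$, one gets $\mathcal{G}(u_n^{(1)})+\mathcal{G}(u_n^{(2)})=o(1)$ and, using $u_n\in\mathcal{M}$ and $I(u_n)\to c$, also $\widetilde I(u_n^{(1)})+\widetilde I(u_n^{(2)})=c+o(1)$. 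Making all these estimates uniform over the bounded family $\{u_n^{(i)}\}$ — in particular the cut-off and Riesz-kernel decay estimates above — is the technical heart of the argument.

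To close, I would scale each $u_n^{(i)}$ to $\mathcal{M}$ by the parameter $\theta_n^{(i)}:=\theta_0(u_n^{(i)})$ of Lemma \ref{lem3-2}(1), which stays bounded and bounded away from $0$ and satisfies $\theta_n^{(i)}\le1$ (resp. $\ge1$) exactly when $\mathcal{G}(u_n^{(i)})\le0$ (resp. $\ge0$). Since $s>\tfrac{3}{4}$ forces $4s-3>0$ and $2s(p+1)-3>0$, the map $\theta\mapsto\widetilde I(u_\theta)$ is strictly increasing; combined with $\widetilde I=I\ge c$ on $\mathcal{M}$ this gives $\widetilde I(u_n^{(i)})\ge c-o(1)$ whenever $\mathcal{G}(u_n^{(i)})\to0$, and $\widetilde I(u_n^{(i)})\ge c+\delta$ for some fixed $\delta>0$ whenever $\mathcal{G}(u_n^{(i)})$ stays bounded away from $0$ from below. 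Passing to a subsequence with $\mathcal{G}(u_n^{(1)})\to L$, hence $\mathcal{G}(u_n^{(2)})\to-L$: if $L=0$ then $\widetilde I(u_n^{(i)})\ge c-o(1)$ for both $i$, so $c+o(1)\ge2c-o(1)$, contradicting $c>0$; if $L\ne0$ then one piece, say $u_n^{(2)}$, eventually has $\mathcal{G}(u_n^{(2)})$ below a fixed negative constant, so $c+o(1)\ge\widetilde I(u_n^{(2)})\ge c+\delta$, again a contradiction. Therefore dichotomy cannot occur, the compactness alternative holds, and the lemma follows.
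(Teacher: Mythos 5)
Your strategy is the paper's own: apply Lions' concentration--compactness to an energy density built from the minimizing sequence, exclude vanishing via Lemma \ref{lem2-3} and the uniform bound $\|u_n\|_{p+1}\geq C$ of Lemma \ref{lem3-2}(6), and exclude dichotomy by cutting $u_n$ into two pieces, proving almost-additivity of all the terms, and rescaling the pieces onto $\mathcal{M}$. Your two variations are legitimate: using two widely separated cut-off radii so that the nonlocal term $\Psi$ splits with genuine equality (via the decay of $|x-y|^{-(3-2s)}$ between the far-apart supports), where the paper keeps overlapping cut-offs and settles for the one-sided inequality \eqref{equ3-17}; and using the reduced functional $\widetilde I(u)=\frac{s}{6s-3}\int u^2+\frac{2s(p-2)}{(6s-3)(p+1)}\int|u|^{p+1}$ in place of the paper's $J$ (both coincide with $I$ on $\mathcal{M}$ and are increasing along the scaling $u_\theta=\theta^{2s}u(\theta\cdot)$).

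The gap is in your final step. The asserted implication ``$\mathcal{G}(u_n^{(i)})\to0$ implies $\widetilde I(u_n^{(i)})\geq c-o(1)$'' is only supported by your argument when $\mathcal{G}(u_n^{(i)})\leq0$: then $\theta_n^{(i)}\leq1$ and monotonicity gives $\widetilde I(u_n^{(i)})\geq\widetilde I\bigl((u_n^{(i)})_{\theta_n^{(i)}}\bigr)\geq c$. But in the case $L=0$ both pieces may satisfy $\mathcal{G}(u_n^{(i)})\to0^{+}$; then $\theta_n^{(i)}\geq1$ and monotonicity runs the wrong way, so ``$\theta_n^{(i)}$ bounded and bounded away from $0$'' yields nothing. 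This is precisely Case 2 of the paper's proof, and closing it requires an extra argument: one must show $\theta_n^{(i)}\to1$, or derive a contradiction when $\theta_n^{(i)}\to\theta_0>1$. The paper does the latter by substituting the constraint $\mathcal{G}\bigl((v_n)_{\theta_n}\bigr)=0$ into $\mathcal{G}(v_n)=o(1)$, obtaining $\mathcal{G}(v_n)=\frac{6s-3}{2}\bigl(1-\theta_n^{-2s(p-2)}\bigr)\int|(-\Delta)^{\frac{s}{2}}v_n|^2+\frac{4s-3}{2}\bigl(1-\theta_n^{-2s(p-1)}\bigr)\int v_n^2+\frac{6s-3}{4}\bigl(1-\theta_n^{-2s(p-2)}\bigr)\int\phi_{v_n}^sv_n^2$, whose coefficients stay positive when $\theta_n\to\theta_0>1$, forcing $v_n\to0$ in $H^s(\mathbb{R}^3)$ and contradicting the fixed positive share of mass carried by that piece. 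Note this also uses the lower bound on the piece's norm, which you assert (``$\|u_n^{(i)}\|$ bounded away from $0$'') but do not prove; it needs a cut-off estimate relating the localized $\rho_n$-mass on $B_{R_n}(y_n)$ to the $H^s$-norm of $u_n^{(1)}$ up to the vanishing annulus contribution. Similarly, in the case $L\neq0$ your improvement $\widetilde I(u_n^{(2)})\geq c+\delta$ is asserted rather than derived: it requires showing that $\theta_n^{(2)}$ stays uniformly away from $1$ when $\mathcal{G}(u_n^{(2)})\leq-\eta<0$ (a quantitative version of the monotonicity of $\theta\mapsto\mathcal{G}(u_\theta)$, uniform over the bounded family), together with $\|u_n^{(2)}\|_{p+1}$ bounded away from $0$; the plain bound $\widetilde I(u_n^{(2)})\geq c$ alone does not produce a contradiction, since $\widetilde I(u_n^{(1)})\to0$ does not control the Gagliardo part of the mass carried by $u_n^{(1)}$.
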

\begin{proof}
Let $\{u_n\}\subset\mathcal{M}$ such that
\begin{equation}\label{equ3-5}
\lim_{n\rightarrow\infty}I(u_n)=c>0.
\end{equation}
Since $\{u_n\}\subset\mathcal{M}$, we see that
\begin{align}\label{equ3-6}
I(u_n)&=\frac{1}{2}\int_{\mathbb{R}^3}|(-\Delta)^{\frac{s}{2}}u_n|^2\,{\rm d}x+\frac{1}{2}\int_{\mathbb{R}^3}|u_n|^2\,{\rm d}x+\frac{1}{4}\int_{\mathbb{R}^3}\phi_{u_n}^su_n^2\,{\rm d}x-\frac{1}{p+1}\int_{\mathbb{R}^3}|u_n|^{p+1}\,{\rm d}x\nonumber\\
&=(\frac{1}{2}-\frac{(6s-3)}{2(2s(p+1)-3)})\int_{\mathbb{R}^3}|(-\Delta)^{\frac{s}{2}}u_n|^2\,{\rm d}x+(\frac{1}{2}-\frac{(4s-3)}{2(2s(p+1)-3)})\nonumber\\
&\int_{\mathbb{R}^3}|u_n|^2\,{\rm d}x+(\frac{1}{4}-\frac{(6s-3)}{4(2s(p+1)-3)})\int_{\mathbb{R}^3}\phi_{u_n}^su_n^2\,{\rm d}x\nonumber\\
&=\frac{s(p-2)}{(2s(p+1)-3)}\int_{\mathbb{R}^3}|(-\Delta)^{\frac{s}{2}}u_n|^2\,{\rm d}x+\frac{s(p-1)}{(2s(p+1)-3)})\int_{\mathbb{R}^3}|u_n|^2\,{\rm d}x\nonumber\\
&+\frac{s(p-2)}{2(2s(p+1)-3)})\int_{\mathbb{R}^3}\phi_{u_n}^su_n^2\,{\rm d}x\\
&\:= J(u_n)\geq0.
\end{align}
From \eqref{equ3-5}, it follows that $\{u_n\}$ is bounded in $H^s(\mathbb{R}^3)$.

Next, we use Proposition \ref{pro2-2} to conclude the compactness of the sequence $\{u_n\}$. Let
\begin{align*}
\rho_n=&\frac{s(p-2)}{C(s)(2s(p+1)-3)}\int_{\mathbb{R}^3}\frac{|u_n(x)-u_n(y)|^2}{|x-y|^{3+2s}}\,{\rm d}y+\frac{s(p-1)}{(2s(p+1)-3)})|u_n|^2+\frac{s(p-2)}{2(2s(p+1)-3)})\phi_{u_n}^su_n^2,
\end{align*}
then by \eqref{equ3-6}, $\{\rho_n\}$ is a sequence of nonnegative $L^1$ functions on $\mathbb{R}^3$ and by \eqref{equ3-5}, it satisfies
\begin{equation}\label{equ3-8}
 \int_{\mathbb{R}^3}\rho_n\,{\rm d}x\rightarrow c.
\end{equation}
$(i)$ Vanishing does not occur. Suppose by contradiction, then for all $R>0$,
\begin{equation*}
\lim_n\sup_{y\in\mathbb{R}^3}\int_{B_R(y)}\rho_n(x)\,{\rm d}x=0.
\end{equation*}
Hence
\begin{equation*}
\lim_n\sup_{y\in\mathbb{R}^3}\int_{B_R(y)}|u_n(x)|^2\,{\rm d}x=0.
\end{equation*}
By vanishing Lemma \ref{lem2-3}, we have that $u_n\rightarrow 0$ in $L^t(\mathbb{R}^3)$ for $2<t<2_s^{\ast}$. As a sequence, from \eqref{equ2-6}, it follows that
\begin{equation*}
\int_{\mathbb{R}^3}\phi_{u_n}^su_n^2\,{\rm d}x\rightarrow0.
\end{equation*}
Since $\{u_n\}\subset\mathcal{M}$, it is easy to verify that
\begin{equation*}
\lim_{n\rightarrow\infty}\Big(\frac{(6s-3)}{2}\int_{\mathbb{R}^3}|(-\Delta)^{\frac{s}{2}}u_n|^2\,{\rm d}x+\frac{(4s-3)}{2}\int_{\mathbb{R}^3}|u_n|^2\,{\rm d}x\Big)=0.
\end{equation*}
Therefore, we obtain
\begin{align*}
\lim_{n\rightarrow\infty}I(u_n)=0,
\end{align*}
which contradicts with \eqref{equ3-5}.

$(ii)$ Dichotomy does not occur.

Suppose by contradiction that there exists an $\alpha\in(0,c)$ and $\{y_n\}\subset\mathbb{R}^3$ such that for all $\varepsilon>0$, there exists $\{R_n\}\subset\mathbb{R}^{+}$ with $R_n\rightarrow+\infty$ satisfying
\begin{equation}\label{equ3-9}
\limsup_{n\rightarrow\infty}\Big(\Big|\alpha-\int_{B_{R_n}(y_n)}\rho_n\,{\rm d}x\Big|+\Big|c-\alpha-\int_{\mathbb{R}^3\backslash B_{2R_n}(y_n)}\rho_n\,{\rm d}x\Big|\Big)<\varepsilon.
\end{equation}
Let $\xi:\mathbb{R}^{+}\cup\{0\}\rightarrow\mathbb{R}^{+}$ be a cut-off function such that $0\leq\xi\leq1$, $\xi(t)=1$ for $t\leq1$, $\xi(t)=0$ for $t\geq2$ and $|\xi'(t)|\leq2$. Set
\begin{equation*}
v_n(x)=\xi\Big(\frac{|x-y_n|}{R_n}\Big)u_n(x),\quad w_n(x)=\Big(1-\xi\Big(\frac{|x-y_n|}{R_n}\Big)\Big)u_n(x),
\end{equation*}
clearly, there hold
\begin{align}\label{equ3-10}
&(v_n(x)-v_n(y))(w_n(x)-w_n(y))=\nonumber\\
&\left\{
                                 \begin{array}{ll}
                                   0, & \hbox{$(x,y)\in B_{R_n}(y_n)\times B_{R_n}(y_n)$,} \\
                                   -u_n(x)u_n(y), & \hbox{$(x,y)\in B_{R_n}(y_n)\times B_{2R_n}^c(y_n)$,} \\
                                   -(u_n(x)-v_n(y))w_n(y), & \hbox{$(x,y)\in B_{R_n}(y_n)\times B_{2R_n}(y_n)\backslash B_{R_n}(y_n)$,} \\
                                   (v_n(x)-u_n(y))w_n(x), & \hbox{$(x,y)\in B_{2R_n}(y_n)\backslash B_{R_n}(y_n)\times B_{R_n}(y_n)$,} \\
                                  (v_n(x)-v_n(y))(w_n(x)-w_n(y)), & \hbox{$(x,y)\in B_{2R_n}(y_n)\backslash B_{R_n}(y_n)\times B_{2R_n}(y_n)\backslash B_{R_n}(y_n)$,} \\
                                   v_n(x)(w_n(x)-u_n(y)), & \hbox{$(x,y)\in B_{2R_n}(y_n)\backslash B_{R_n}(y_n)\times B_{2R_n}^c(y_n)$,}\\
                                   -u_n(y)u_n(x), & \hbox{$(x,y)\in B_{2R_n}^c(y_n)\times B_{R_n}(y_n)$,}\\
                                    -v_n(y)(u_n(x)-w_n(y)), & \hbox{$(x,y)\in B_{2R_n}^c(y_n)\times B_{2R_n}(y_n)\backslash B_{R_n}(y_n)$,}\\
                                    0, & \hbox{$(x,y)\in B_{2R_n}^c(y_n)\times B_{2R_n}^c(y_n)$,}\\
                                 \end{array}
                               \right.
\end{align}
Then by \eqref{equ3-9}, we see that
\begin{align*}
\liminf_{n\rightarrow\infty}\int_{\mathbb{R}^3}J(v_n)\,{\rm d}x&\geq\liminf_{n\rightarrow\infty}\int_{B_{R_n}(y_n)}J(v_n)\,{\rm d}x=\liminf_{n\rightarrow\infty}\int_{B_{R_n}(y_n)}J(u_n)\,{\rm d}x
&=\liminf_{n\rightarrow\infty}\int_{B_{R_n}(y_n)}\rho_n\,{\rm d}x\geq\alpha
\end{align*}
and
\begin{align*}
\liminf_{n\rightarrow\infty}\int_{\mathbb{R}^3}J(w_n)\,{\rm d}x&\geq\liminf_{n\rightarrow\infty}\int_{\mathbb{R}^3\backslash B_{2R_n}(y_n)}J(w_n)\,{\rm d}x=\liminf_{n\rightarrow\infty}\int_{\mathbb{R}^3\backslash B_{2R_n}(y_n)}J(u_n)\,{\rm d}x\\
&=\liminf_{n\rightarrow\infty}\int_{\mathbb{R}^3\backslash B_{2R_n}(y_n)}\rho_n\,{\rm d}x\geq c-\alpha.
\end{align*}
Denote $\Omega_n=B_{2R_n}(y_n)\backslash B_{R_n}(y_n)$, by \eqref{equ3-8} and the above two inequalities, we get
\begin{align*}
0\leq\lim_{n\rightarrow\infty}\int_{\Omega_n}\rho_n\,{\rm d}x&=\lim_{n\rightarrow\infty}\Big(\int_{\mathbb{R}^3}\rho_n\,{\rm d}x-\int_{B_{R_n}(y_n)}\rho_n\,{\rm d}x-\int_{\mathbb{R}^3\backslash B_{2R_n}(y_n)}\rho_n\,{\rm d}x\Big)\\
&\leq c-\liminf_{n\rightarrow\infty}\int_{B_{R_n}(y_n)}\rho_n\,{\rm d}x-\liminf_{n\rightarrow\infty}\int_{\mathbb{R}^3\backslash B_{2R_n}(y_n)}\rho_n\,{\rm d}x\\
&\leq c-\alpha-(c-\alpha)=0,
\end{align*}
that is
\begin{equation*}
\lim_{n\rightarrow\infty}\int_{\Omega_n}\rho_n\,{\rm d}x=0
\end{equation*}
which means that
\begin{equation}\label{equ3-11}
\int_{\Omega_n}\Big(\int_{\mathbb{R}^3}\frac{|u_n(x)-u_n(y)|^2}{|x-y|^{3+2s}}\,{\rm d}y+u_n^2\Big)\,{\rm d}x\,{\rm d}x\rightarrow0\,\,\text{and}\,\, \int_{\Omega_n}\phi_{u_n}^su_n^2\,{\rm d}x\rightarrow0
\end{equation}
as $n\rightarrow\infty$. Hence, by Lemma \ref{lemp-2-2}, we have
\begin{align}\label{equ3-12}
\int_{\Omega_n}\int_{\mathbb{R}^3}\frac{|v_n(x)-v_n(y)|^2}{|x-y|^{3+2s}}\,{\rm d}y\leq C\int_{\Omega_n}|u_n|^2\,{\rm d}x+2\int_{\Omega_n}\int_{\mathbb{R}^3}\frac{|u_n(x)-u_n(y)|^2}{|x-y|^{3+2s}}\,{\rm d}y\rightarrow0
\end{align}
and
\begin{align}\label{equ3-13}
\int_{\Omega_n}\int_{\mathbb{R}^3}\frac{|w_n(x)-w_n(y)|^2}{|x-y|^{3+2s}}\,{\rm d}y\leq C\int_{\Omega_n}|u_n|^2\,{\rm d}x+2\int_{\Omega_n}\int_{\mathbb{R}^3}\frac{|u_n(x)-u_n(y)|^2}{|x-y|^{3+2s}}\,{\rm d}y\rightarrow0.
\end{align}
Now we will show that
\begin{equation}\label{equ3-14}
\int_{\mathbb{R}^3}|(-\Delta)^{\frac{s}{2}}u_n|^2\,{\rm d}x=\int_{\mathbb{R}^3}|(-\Delta)^{\frac{s}{2}}v_n|^2\,{\rm d}x+\int_{\mathbb{R}^3}|(-\Delta)^{\frac{s}{2}}w_n|^2\,{\rm d}x+o_n(1),
\end{equation}
\begin{equation}\label{equ3-15}
\int_{\mathbb{R}^3}u_n^2(x)\,{\rm d}x=\int_{\mathbb{R}^3}v_n^2(x)\,{\rm d}x+\int_{\mathbb{R}^3}w_n^2(x)\,{\rm d}x+o_n(1),
\end{equation}
\begin{equation}\label{equ3-16}
\int_{\mathbb{R}^3}|u_n|^{p+1}\,{\rm d}x=\int_{\mathbb{R}^3}|v_n|^{p+1}\,{\rm d}x+\int_{\mathbb{R}^3}|w_n|^{p+1}\,{\rm d}x+o_n(1),
\end{equation}
and
\begin{equation}\label{equ3-17}
\int_{\mathbb{R}^3}\phi_{u_n}^su_n^2\,{\rm d}x\geq\int_{\mathbb{R}^3}\phi_{v_n}^sv_n^2\,{\rm d}x+\int_{\mathbb{R}^3}\phi_{w_n}^sw_n^2\,{\rm d}x+o_n(1).
\end{equation}

{\bf Proof of \eqref{equ3-14}.}
Observe that
\begin{align*}
&C(s)\int_{\mathbb{R}^3}|(-\Delta)^{\frac{s}{2}}u_n|^2\,{\rm d}x=\int_{\mathbb{R}^3}\int_{\mathbb{R}^3}\frac{|u_n(x)-u_n(y)|^2}{|x-y|^{3+2s}}\,{\rm d}y\,{\rm d}x\\
&=\int_{\mathbb{R}^3}\int_{\mathbb{R}^3}\frac{|v_n(x)-v_n(y)|^2}{|x-y|^{3+2s}}\,{\rm d}y\,{\rm d}x+\int_{\mathbb{R}^3}\int_{\mathbb{R}^3}\frac{|w_n(x)-w_n(y)|^2}{|x-y|^{3+2s}}\,{\rm d}y\,{\rm d}x\\
&+2\int_{\mathbb{R}^3}\int_{\mathbb{R}^3}\frac{(v_n(x)-v_n(y))(w_n(x)-w_n(y))}{|x-y|^{3+2s}}\,{\rm d}y\,{\rm d}x.
\end{align*}
From \eqref{equ3-10}, we split the last integral as follows.
\begin{align*}
\int_{\mathbb{R}^3}\int_{\mathbb{R}^3}\frac{(v_n(x)-v_n(y))(w_n(x)-w_n(y))}{|x-y|^{3+2s}}\,{\rm d}y\,{\rm d}x=\sum_{i=1}^4I_i
\end{align*}
where
\begin{equation*}
I_1=-2\int_{B_{R_n}(y_n)\times B_{2R_n}^c(y_n)}\frac{u_n(x)u_n(y)}{|x-y|^{3+2s}}\,{\rm d}y\,{\rm d}x
\end{equation*}
\begin{equation*}
I_2=2\int_{B_{R_n}(y_n)\times B_{2R_n}(y_n)\backslash B_{R_n}(y_n)}\frac{(v_n(x)-v_n(y))(w_n(x)-w_n(y))}{|x-y|^{3+2s}}\,{\rm d}y\,{\rm d}x
\end{equation*}
\begin{equation*}
I_3=\int_{B_{2R_n}(y_n)\backslash B_{R_n}(y_n)\times B_{2R_n}(y_n)\backslash B_{R_n}(y_n)}\frac{(v_n(x)-v_n(y))(w_n(x)-w_n(y))}{|x-y|^{3+2s}}\,{\rm d}y\,{\rm d}x
\end{equation*}
\begin{equation*}
I_4=2\int_{B_{2R_n}(y_n)\backslash B_{R_n}(y_n)\times B_{2R_n}^c(y_n)}\frac{(v_n(x)-v_n(y))(w_n(x)-u_n(y))}{|x-y|^{3+2s}}\,{\rm d}y\,{\rm d}x.
\end{equation*}
We estimate $I_i$ as follows.

{\bf Estimating $I_1$}
\begin{align*}
|I_1|&\leq\int_{B_{R_n}(y_n)\times B_{2R_n}^c(y_n)}\frac{u_n^2(x)+u_n^2(y)}{|x-y|^{3+2s}}\,{\rm d}x\,{\rm d}y=\int_{B_{R_n}(y_n)}u_n^2(x)\,{\rm d}x\int_{B_{2R_n}^c(y_n)}\frac{1}{|x-y|^{3+2s}}\,{\rm d}y\\
&+\int_{B_{2R_n}^c(y_n)}u_n^2(y)\Big(\int_{B_{R_n}(y_n)}\frac{1}{|x-y|^{3+2s}}\,{\rm d}x\Big)\,{\rm d}y\\
&\leq\int_{B_{R_n}(y_n)}u_n^2(x)\,{\rm d}x\int_{|x-y|\geq R_n}\frac{1}{|x-y|^{3+2s}}\,{\rm d}y+\int_{B_{2R_n}^c(y_n)}u_n^2(y)\,{\rm d}y\int_{|x-y|\geq R_n}\frac{1}{|x-y|^{3+2s}}\,{\rm d}x\\
&\leq\frac{C}{R_n^{2s}}\rightarrow0,
\end{align*}
where we have used the fact that $x\in B_{R_n}(y_n)$, $y\in B_{2R_n}^c(y_n)$ implies that $|x-y_n|\geq R_n$.

{\bf Estimating $I_2$}
\begin{align*}
|I_2|&\leq\int_{B_{R_n}(y_n)\times B_{2R_n}(y_n)\backslash B_{R_n}(y_n)}\frac{|v_n(x)-v_n(y)|^2}{|x-y|^{3+2s}}\,{\rm d}x\,{\rm d}y\\
&+\int_{B_{R_n}(y_n)\times B_{2R_n}(y_n)\backslash B_{R_n}(y_n)}\frac{|w_n(x)-w_n(y)|^2}{|x-y|^{3+2s}}\,{\rm d}x\,{\rm d}y\\
&\leq C\Big(\int_{\Omega_n}|u_n(x)|^2\,{\rm d}x+\int_{B_{R_n}(y_n)\times B_{2R_n}(y_n)\backslash B_{R_n}(y_n)}\frac{|u_n(x)-u_n(y)|^2}{|x-y|^{3+2s}}\,{\rm d}x\,{\rm d}y\Big)\\
&\leq C\Big(\int_{\Omega_n}|u_n(x)|^2\,{\rm d}x+\int_{\Omega_n\times\mathbb{R}^3}\frac{|u_n(x)-u_n(y)|^2}{|x-y|^{3+2s}}\,{\rm d}x\,{\rm d}y\Big)\\
&\rightarrow0,
\end{align*}
where we have used Lemma \ref{lemp-2-2} and \eqref{equ3-11}.

{\bf Estimating $I_3$}
\begin{align*}
|I_3|&\leq\int_{\Omega_n\times \Omega_n}\frac{|v_n(x)-v_n(y)|^2}{|x-y|^{3+2s}}\,{\rm d}x\,{\rm d}y+\int_{\Omega_n\times \Omega_n}\frac{|w_n(x)-w_n(y)|^2}{|x-y|^{3+2s}}\,{\rm d}x\,{\rm d}y\\
&\leq\int_{\Omega_n\times \mathbb{R}^3}\frac{|v_n(x)-v_n(y)|^2}{|x-y|^{3+2s}}\,{\rm d}x\,{\rm d}y+\int_{\Omega_n\times \mathbb{R}^3}\frac{|w_n(x)-w_n(y)|^2}{|x-y|^{3+2s}}\,{\rm d}x\,{\rm d}y\\
&\rightarrow0,
\end{align*}
where we have used \eqref{equ3-12} and \eqref{equ3-13}.

Similar to the proof of $I_2$, we can deduce that $I_4\rightarrow0$ as $n\rightarrow\infty$. Therefore, we have proved that
\begin{equation*}
\int_{\mathbb{R}^3}\int_{\mathbb{R}^3}\frac{(v_n(x)-v_n(y))(w_n(x)-w_n(y))}{|x-y|^{3+2s}}\,{\rm d}y\,{\rm d}x=o_n(1),
\end{equation*}
and thus \eqref{equ3-14} holds true.

{\bf Proof of \eqref{equ3-15}.}
Using \eqref{equ3-11}, we infer that
\begin{align*}
\int_{\mathbb{R}^3}|u_n|^2\,{\rm d}x-&\int_{\mathbb{R}^3}|v_n|^2\,{\rm d}x-\int_{\mathbb{R}^3}|w_n|^2\,{\rm d}x=\int_{\mathbb{R}^3}\Big(1-\xi^2-(1-\xi)^2\Big)|u_n|^2\,{\rm d}x\\
&=\int_{\Omega_n}\Big(1-\xi^2-(1-\xi)^2\Big)|u_n|^2\,{\rm d}x\leq\int_{\Omega_n}|u_n|^2\,{\rm d}x\rightarrow0,
\end{align*}
and thus \eqref{equ3-15} is true.

{\bf Proof of \eqref{equ3-16}.}
Using \eqref{equ3-11} and H\"{o}lder's inequality, we have
\begin{align*}
\int_{\mathbb{R}^3}|u_n|^{p+1}\,{\rm d}x-&\int_{\mathbb{R}^3}|v_n|^{p+1}\,{\rm d}x-\int_{\mathbb{R}^3}|w_n|^{p+1}\,{\rm d}x=\int_{\mathbb{R}^3}\Big(1-\xi^{p+1}-(1-\xi)^{p+1}\Big)\\
&|u_n|^{p+1}\,{\rm d}x=\int_{\Omega_n}\Big(1-\xi^{p+1}-(1-\xi)^{p+1}\Big)|u_n|^{p+1}\,{\rm d}x\\
&\leq\int_{\Omega_n}|u_n|^{p+1}\,{\rm d}x\\
&\leq\Big(\int_{\Omega_n}|u_n|^2\,{\rm d}x\Big)^{\frac{(1-\theta)(p+1)}{2}}\Big(\int_{\mathbb{R}^3}|u_n|^{2_s^{\ast}}\,{\rm d}x\Big)^{\frac{\theta(p+1)}{2_s^{\ast}}}\\
&\rightarrow0,
\end{align*}
where $\theta\in(0,1)$ such that $\frac{1}{p+1}=\frac{1-\theta}{2}+\frac{\theta}{2_s^{\ast}}$.

{\bf Proof of \eqref{equ3-17}.}
We have
\begin{align*}
\int_{\mathbb{R}^3}&\phi_{u_n}^su_n^2\,{\rm d}x-\int_{\mathbb{R}^3}\phi_{v_n}^sv_n^2\,{\rm d}x-\int_{\mathbb{R}^3}\phi_{w_n}^sw_n^2\,{\rm d}x=2\int_{\mathbb{R}^3}\int_{\mathbb{R}^3}\frac{v_n^2(x)w_n^2(y)}{|x-y|^{3-2s}}\,{\rm d}x\,{\rm d}y\\
&+4\int_{\mathbb{R}^3}\int_{\mathbb{R}^3}\frac{v_n^2(x)v_n(y)w_n(y)}{|x-y|^{3-2s}}\,{\rm d}x\,{\rm d}y+4\int_{\mathbb{R}^3}\int_{\mathbb{R}^3}\frac{v_n(x)w_n(x)w_n^2(y)}{|x-y|^{3-2s}}\,{\rm d}x\,{\rm d}y\\
&+4\int_{\mathbb{R}^3}\int_{\mathbb{R}^3}\frac{v_n(x)v_n(y)w_n(x)w_n(y)}{|x-y|^{3-2s}}\,{\rm d}x\,{\rm d}y.
\end{align*}
Using \eqref{equ3-11}, we have
\begin{align*}
\int_{\mathbb{R}^3}&\int_{\mathbb{R}^3}\frac{v_n^2(x)v_n(y)w_n(y)}{|x-y|^{3-2s}}\,{\rm d}x\,{\rm d}y=\int_{\mathbb{R}^3}\int_{\Omega_n}\frac{v_n^2(x)v_n(y)w_n(y)}{|x-y|^{3-2s}}\,{\rm d}x\,{\rm d}y\leq\int_{\Omega_n}\phi_{u_n}^su_n^2\,{\rm d}y \,{\rm d}x\rightarrow0.
\end{align*}
Similarly, we can show that
\begin{equation*}
\int_{\mathbb{R}^3}\int_{\mathbb{R}^3}\frac{v_n(x)w_n(x)w_n^2(y)}{|x-y|^{3-2s}}\,{\rm d}x\,{\rm d}y=o_n(1)
\end{equation*}
and
\begin{equation*}
\int_{\mathbb{R}^3}\int_{\mathbb{R}^3}\frac{v_n(x)v_n(y)w_n(x)w_n(y)}{|x-y|^{3-2s}}\,{\rm d}x\,{\rm d}y=o_n(1).
\end{equation*}
Therefore, we have proved that
\begin{align*}
\int_{\mathbb{R}^3}\phi_{u_n}u_n^2\,{\rm d}x&-\int_{\mathbb{R}^3}\phi_{v_n}^sv_n^2\,{\rm d}x-\int_{\mathbb{R}^3}\phi_{w_n}^sw_n^2\,{\rm d}x&=2\int_{\mathbb{R}^3}\int_{\mathbb{R}^3}\frac{v_n^2(x)w_n^2(y)}{|x-y|^{3-2s}}\,{\rm d}x\,{\rm d}y+o_n(1)\\
&\geq o_n(1)
\end{align*}
and thus \eqref{equ3-17} is proved.

Hence, by \eqref{equ3-14}, \eqref{equ3-15} and \eqref{equ3-17}, we get
\begin{equation*}
J(u_n)\geq J(v_n)+J(w_n)+o_n(1).
\end{equation*}
Then
\begin{equation*}
c=\lim_{n\rightarrow\infty}J(u_n)\geq\liminf_{n\rightarrow\infty} J(v_n)+\liminf_{n\rightarrow\infty} J(w_n)\geq\alpha+c-\alpha=c,
\end{equation*}
hence
\begin{equation}\label{equ3-18}
\lim_{n\rightarrow\infty}J(v_n)=\alpha,\quad \lim_{n\rightarrow\infty}J(w_n)=c-\alpha.
\end{equation}
Since $u_n\in\mathcal{M}$, $\mathcal{G}(u_n)=0$. By \eqref{equ3-14}-\eqref{equ3-17}, we have
\begin{equation}\label{equ3-19}
0=\mathcal{G}(u_n)\geq\mathcal{G}(v_n)+\mathcal{G}(w_n)+o_n(1).
\end{equation}
We have to discuss the following two cases:\\
Case 1. Up to a subsequence, we may assume that $\mathcal{G}(v_n)\leq0$ or $\mathcal{G}(v_n)\leq0$.

Without loss of generality, we suppose that $\mathcal{G}(v_n)\leq0$, then
\begin{align*}
\frac{6s-3}{2}\int_{\mathbb{R}^3}|(-\Delta)^{\frac{s}{2}}v_n|^2\,{\rm d}x+\frac{4s-3}{2}\int_{\mathbb{R}^3}|v_n|^2\,{\rm d}x+\frac{6s-3}{4}\int_{\mathbb{R}^3}\phi_{v_n}^sv_n^2\,{\rm d}x-\frac{2s(p+1)-3}{p+1}\int_{\mathbb{R}^3}|v_n|^{p+1}\,{\rm d}x\leq0.
\end{align*}
By Lemma \ref{lem2-1}, for any $n$, there exists $\theta_n>0$ such that $(v_n)_{\theta_n}\in\mathcal{M}$ and then $\mathcal{G}((v_n)_{\theta_n})=0$. Hence
\begin{align*}
\frac{6s-3}{2}(\theta_n^{2s(p-1)}-\theta_n^{2s})&\int_{\mathbb{R}^3}|(-\Delta)^{\frac{s}{2}}v_n|^2\,{\rm d}x+\frac{4s-3}{2}(\theta_n^{2s(p-1)}-1)\int_{\mathbb{R}^3}|v_n|^2\,{\rm d}x\\
&+\frac{6s-3}{4}(\theta_n^{2s(p-1)}-\theta_n^{2s})\int_{\mathbb{R}^3}\phi_{v_n}^sv_n^2\,{\rm d}x\leq0
\end{align*}
which implies that $\theta_n\leq1$. Then
\begin{equation}\label{equ3-20}
c\leq I((v_n)_{\theta_n})=J((v_n)_{\theta_n})\leq J(v_n)\rightarrow \alpha<c,
\end{equation}
which is a contradiction.

Case 2. Up to a subsequence, we may assume that $\mathcal{G}(v_n)>0$ and $\mathcal{G}(w_n)>0$.

By \eqref{equ3-19}, we see that $\mathcal{G}(v_n)=o_n(1)$ and $\mathcal{G}(w_n)=o_n(1)$. Repeat the argument of Case 1, we can obtain a contradiction as \eqref{equ3-20}. Thus we suppose that
\begin{equation*}
\lim_{n\rightarrow\infty}\theta_n=\theta_0>1.
\end{equation*}
We have
\begin{align*}
o_n(1)&=\mathcal{G}(v_n)=\frac{6s-3}{2}\int_{\mathbb{R}^3}|(-\Delta)^{\frac{s}{2}}v_n|^2\,{\rm d}x+\frac{4s-3}{2}\int_{\mathbb{R}^3}|v_n|^2\,{\rm d}x\\
&+\frac{6s-3}{4}\int_{\mathbb{R}^3}\phi_{v_n}^sv_n^2\,{\rm d}x-\frac{2s(p+1)-3}{p+1}\int_{\mathbb{R}^3}|v_n|^{p+1}\,{\rm d}x\\
&=\frac{6s-3}{2}(1-\frac{1}{\theta_n^{2s(p-2)}})\int_{\mathbb{R}^3}|(-\Delta)^{\frac{s}{2}}v_n|^2\,{\rm d}x+\frac{4s-3}{2}(1-\frac{1}{\theta_n^{2s(p-1)}})\\
&\int_{\mathbb{R}^3}|v_n|^2\,{\rm d}x+\frac{6s-3}{4}(1-\frac{1}{\theta_n^{2s(p-2)}})\int_{\mathbb{R}^3}\phi_{v_n}^sv_n^2\,{\rm d}x
\end{align*}
which implies that $v_n\rightarrow0$ in $H^s(\mathbb{R}^3)$, this contradicts with \eqref{equ3-18}.

Hence, we conclude that dichotomy cannot occur. As a result, compactness holds for the sequence $\{\rho_n\}$, i.e., there exists $\{y_n\}\subset\mathbb{R}^3$ such that for any $\varepsilon>0$, there exists an $R>0$ satisfying
\begin{equation*}
\liminf_{n\rightarrow\infty}\int_{B_R(y_n)}\rho_n(x)\,{\rm d}x\geq c-\varepsilon.
\end{equation*}
Since $\lim\limits_{n\rightarrow\infty}I(u_n)=\lim\limits_{n\rightarrow\infty}J(u_n)=c$, hence
\begin{align*}
\varepsilon>&c-(c-\varepsilon)\geq \lim_{n\rightarrow\infty}J(u_n)-\liminf_{n\rightarrow\infty}\int_{B_R(y_n)}\rho_n(x)\,{\rm d}x\geq\lim_{n\rightarrow\infty}\Big(J(u_n)-\int_{B_R(y_n)}\rho_n(x)\,{\rm d}x\Big)\\
&=\lim_{n\rightarrow\infty}\int_{\mathbb{R}^3\backslash B_R(y_n)}\rho_n(x)\,{\rm d}x
\end{align*}
which implies the conclusion holds true.
\end{proof}
\begin{remark}\label{rem3-1}
By Lemma \ref{lemp-2-3}, we see that
\begin{equation*}
\int_{\mathbb{R}^3}\phi_{u}^su^2\,{\rm d}x\leq\int_{\mathbb{R}^3}\phi_{u^{\ast}}^s(u^{\ast})^2\,{\rm d}x,
\end{equation*}
we do not know if the converse inequality holds, if it is true, by Lemma \ref{lemp-2-2} and \ref{lemp-2-3}, we can use the symmetric radial decreasing rearrangement technique to simplify the proof of compactness.
\end{remark}
\subsection{Proof of Theorem \ref{thm1-3}.}
Let $\{u_n\}\subset\mathcal{M}$ be a minimizing sequence for $c$, by Lemma \ref{lem3-4}, there exists $\{y_n\}\subset\mathbb{R}^3$ such that for any $\varepsilon>0$, there exists an $R>0$ satisfying
\begin{equation}\label{equ3-21}
\int_{\mathbb{R}^3\backslash B_R(y_n)}\Big(\int_{\mathbb{R}^3}\frac{|u_n(x)-u_n(y)|^2}{|x-y|^{3+2s}}\,{\rm d}y+u_n^2\Big)\,{\rm d}x\leq \varepsilon.
\end{equation}
Define $\widetilde{u_n}(x)=u_n(x-y_n)\in H^s(\mathbb{R}^3)$, then $\phi_{\widetilde{u_n}}^s=\phi_{u_n}^s(\cdot-y_n)$ and thus $\widetilde{u_n}\in\mathcal{M}$ and also $J(u_n)=J(\widetilde{u_n})$. This means that $\widetilde{u_n}$ is also a minimizing sequence for $c$. Hence, by \eqref{equ3-21}, we have for any $\varepsilon>0$, there exists an $R>0$ such that
\begin{equation}\label{equ3-22}
\int_{\mathbb{R}^3\backslash B_R(0)}\Big(\int_{\mathbb{R}^3}\frac{|\widetilde{u_n}(x)-\widetilde{u_n}(y)|^2}{|x-y|^{3+2s}}\,{\rm d}y+\widetilde{u_n}^2\Big)\,{\rm d}x\leq \varepsilon.
\end{equation}
Since $\widetilde{u_n}$ is bounded in $H^s(\mathbb{R}^3)$, up to a subsequence, we may assume that there exists $\widetilde{u}\in H^s(\mathbb{R}^3)$ such that
\begin{equation}\label{equ3-23}
\left\{
  \begin{array}{ll}
   \widetilde{u_n}\rightharpoonup \widetilde{u}, & \hbox{in $H^s(\mathbb{R}^3)$, } \\
   \widetilde{u_n}\rightarrow \widetilde{u}, & \hbox{in $L_{{\rm loc}}^r(\mathbb{R}^3)$ with $1\leq r<2_s^{\ast}$,} \\
    \widetilde{u_n}\rightarrow \widetilde{u}, & \hbox{a.e. in $\mathbb{R}^3$.}
  \end{array}
\right.
\end{equation}
By Fatou's Lemma and \eqref{equ3-22}, we get
\begin{equation}\label{equ3-24}
\int_{\mathbb{R}^3\backslash B_R(0)}\Big(\int_{\mathbb{R}^3}\frac{|\widetilde{u}(x)-\widetilde{u}(y)|^2}{|x-y|^{3+2s}}\,{\rm d}y+\widetilde{u}^2\Big)\,{\rm d}x\leq \varepsilon.
\end{equation}
By \eqref{equ3-22}-\eqref{equ3-24}, and the Sobolev embedding theorem, we have that for any $r\in[2,2_s^{\ast})$ and any $\varepsilon>0$, there exists a $C>0$ such that
\begin{align*}
\int_{\mathbb{R}^3}|\widetilde{u_n}-\widetilde{u}|^r\,{\rm d}x&=\int_{B_R(0)}|\widetilde{u_n}-\widetilde{u}|^r\,{\rm d}x+\int_{\mathbb{R}^3\backslash B_R(0)}|\widetilde{u_n}-\widetilde{u}|^r\,{\rm d}x\\
&\leq\varepsilon+C(\|\widetilde{u_n}\|_{H^s(\mathbb{R}^3\backslash B_R(0))}+\|\widetilde{u}\|_{H^s(\mathbb{R}^3\backslash B_R(0))})\\
&\leq\varepsilon+C\Big(\int_{\mathbb{R}^3\backslash B_R(0)}\Big(\int_{\mathbb{R}^3}\frac{|\widetilde{u_n}(x)-\widetilde{u_n}(y)|^2}{|x-y|^{3+2s}}\,{\rm d}y\\
&+\int_{\mathbb{R}^3\backslash B_R(0)}\int_{\mathbb{R}^3}\frac{|\widetilde{u}(x)-\widetilde{u}(y)|^2}{|x-y|^{3+2s}}\,{\rm d}y\Big)\\
&\leq(1+2C)\varepsilon,
\end{align*}
where $C>0$ is the constant of the embedding $H^s(B_R(0))\hookrightarrow L^r(B_R(0))$. Hence, we have proved that
\begin{equation}\label{equ3-25}
\widetilde{u_n}\rightarrow\widetilde{u}\quad \text{in}\,\, L^r(\mathbb{R}^3)\,\, \text{for any}\,\, r\in[2,2_s^{\ast}).
\end{equation}
Since $\widetilde{u_n}\in\mathcal{M}$, by Lemma \ref{lem3-2}, $\|\widetilde{u_n}\|_{p+1}\geq C>0$, hence $\|\widetilde{u}\|_{p+1}\geq C>0$, and as a result, $\widetilde{u}\neq0$.

Finally, we show that $\widetilde{u_n}\rightarrow\widetilde{u}$ in $H^s(\mathbb{R}^3)$. By \eqref{equ2-6} and \eqref{equ3-25}, we deduce that
\begin{equation*}
\phi_{\widetilde{u_n}}^s\rightarrow\phi_{\widetilde{u}}^s\quad \text{in}\,\, \mathcal{D}^{s,2}(\mathbb{R}^3),
\end{equation*}
and thus
\begin{equation}\label{equ3-26}
\int_{\mathbb{R}^3}\phi_{\widetilde{u_n}}^s\widetilde{u_n}^2\,{\rm d}x\rightarrow\int_{\mathbb{R}^3}\phi_{\widetilde{u}}^s\widetilde{u}^2\,{\rm d}x.
\end{equation}
From the weak semi-continuousness of norm, it is easy to verify that
\begin{equation*}
\mathcal{G}(\widetilde{u})\leq\liminf_{n\rightarrow\infty}\mathcal{G}(\widetilde{u_n})=0.
\end{equation*}
Let us set
\begin{equation*}
a=\int_{\mathbb{R}^3}\int_{\mathbb{R}^3}\frac{|\widetilde{u}(x)-\widetilde{u}(y)|^2}{|x-y|^{3+2s}}\,{\rm d}x\,{\rm d}y,\quad \widetilde{a}=\liminf_{n\rightarrow\infty}\int_{\mathbb{R}^3}\int_{\mathbb{R}^3}\frac{|\widetilde{u_n}(x)-\widetilde{u_n}(y)|^2}{|x-y|^{3+2s}}\,{\rm d}x\,{\rm d}y.
\end{equation*}
Obviously, $a\leq \widetilde{a}$. We suppose that $a<\widetilde{a}$, then $I(\widetilde{u})<\liminf\limits_{n\rightarrow\infty}I(\widetilde{u_n})=c$ and $\mathcal{G}(\widetilde{u})<0$. By Lemma \ref{lem3-2}, there exists a $0<\theta<1$ such that $\widetilde{u}_{\theta}\in\mathcal{M}$. Therefore, we have
\begin{equation*}
c\leq I(\widetilde{u}_{\theta})=J(\widetilde{u}_{\theta})<J(\widetilde{u})\leq\liminf_{n\rightarrow\infty}J(\widetilde{u_n})=\liminf_{n\rightarrow\infty}I(\widetilde{u_n})=c,
\end{equation*}
which means a contradiction. Hence $a=\widetilde{a}$. So $\widetilde{u_n}\rightarrow\widetilde{u}$ in $H^s(\mathbb{R}^3)$ and we can conclude that $\widetilde{u}\in\mathcal{M}$ and $I(\widetilde{u})=c$.

Finally, we only need to prove that the ground state solution is nonnegative. Put $u_{+}=\max\{u, 0\}$,
the positive part of $u$. We note that all the calculations above can be repeated word by word, replacing $I$ with the functional
\begin{equation*}
I_{+}(u)=\frac{1}{2}\int_{\mathbb{R}^3}|(-\Delta )^{\frac{s}{2}}u|^2+V(x)u^2\,{\rm d}x+\frac{1}{4}\int_{\mathbb{R}^3}\phi_u^su^2\,{\rm d}x-\frac{1}{p+1}\int_{\mathbb{R}^3}u_{+}^{p+1}\,{\rm d}x.
\end{equation*}
In this way we get a ground state solution $u$ of the equation
\begin{equation*}
(-\Delta)^su+u+\phi_u^su=(u_{+})^{p}.
\end{equation*}
Multiplying the above equation by $u_{-}$ and we integrate over $\mathbb{R}^3$, we obtain
\begin{equation*}
\int_{\mathbb{R}^3}(-\Delta)^su u_{-}\,{\rm d}x=-\int_{\mathbb{R}^3}(1+\phi_u^s)(u_{-})^2\,{\rm d}x\leq0.
\end{equation*}
Hence, by integrate by parts we get
\begin{equation*}
\int_{\mathbb{R}^3}\int_{\mathbb{R}^3}\frac{(u(x)-u(y))(u_{-}(x)-u_{-}(y))}{|x-y|^{3+2s}}\,{\rm d}x\,{\rm d}y\leq0.
\end{equation*}
Similarly as the proof of Lemma \ref{lemr-2-1}, we deduce that
\begin{equation*}
\int_{\mathbb{R}^3}\int_{\mathbb{R}^3}\frac{|u_{-}(x)-u_{-}(y)|^2}{|x-y|^{3+2s}}\,{\rm d}x\,{\rm d}y\leq0
\end{equation*}
which implies that $u_{-}=0$. Thus $u\geq0$.

\section{Nonconstant potential case}

Let $\Lambda=[\delta,1]$, $\delta\in(0,1)$ is a positive constant. We consider a family of functionals $I_{V,\lambda}: H^s(\mathbb{R}^3)\rightarrow\mathbb{R}$ defined by
\begin{equation*}
I_{V,\lambda}(u)=\frac{1}{2}\int_{\mathbb{R}^3}|(-\Delta )^{\frac{s}{2}}u|^2+V(x)u^2\,{\rm d}x+\frac{1}{4}\int_{\mathbb{R}^3}\phi_u^su^2\,{\rm d}x-\frac{\lambda}{p+1}\int_{\mathbb{R}^3}|u|^{p+1}\,{\rm d}x.
\end{equation*}
Let $I_{V,\lambda}(u)=A(u)-\lambda B(u)$, where
\begin{equation*}
A(u)=\frac{1}{2}\int_{\mathbb{R}^3}|(-\Delta )^{\frac{s}{2}}u|^2+V(x)u^2\,{\rm d}x+\frac{1}{4}\int_{\mathbb{R}^3}\phi_u^su^2\,{\rm d}x\rightarrow+\infty,\quad \text{as}\,\, \|u\|\rightarrow\infty
\end{equation*}
and
\begin{equation*}
B(u)=\frac{1}{p+1}\int_{\mathbb{R}^3}|u|^{p+1}\,{\rm d}x.
\end{equation*}

We first verify all the conditions of Theorem \ref{thm1-2}.
\begin{lemma}\label{lem4-1}
Suppose that $(V_1)$ and $(V_2)$ hold and $2<p<2_s^{\ast}-1$. Then\\
$(i)$ there exists a $v_0\in H^s(\mathbb{R}^3)\backslash\{0\}$ such that $I_{V,\lambda}(v_0)<0$ for any $\lambda\in\Lambda$;\\
$(ii)$ $c_{\lambda}=\inf\limits_{\gamma\in\Gamma}\max\limits_{t\in[0,1]}I_{V,\lambda}(\gamma(t))>\max\{I_{V,\lambda}(0),I_{V,\lambda}(v_0)\}$ for all $\lambda\in\Lambda$, where
$\Gamma=\{\gamma\in C([0,1],H^s(\mathbb{R}^3))\,\,\Big|\,\, \gamma(0)=0, \gamma(1)=v_0\}$.
\end{lemma}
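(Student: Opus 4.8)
The plan is to verify the two mountain–pass hypotheses of Theorem \ref{thm1-2} by combining the dilation $u_\theta:=\theta^{2s}u(\theta x)$ used in Section 3 with the Sobolev embedding $H^s(\mathbb{R}^3)\hookrightarrow L^{p+1}(\mathbb{R}^3)$, paying attention to uniformity in $\lambda$. First I would record the structure of $I_{V,\lambda}$ near $0$: using $\Psi(u)\ge0$, $\lambda\le1$ on $\Lambda$, the equivalence of $\|\cdot\|$ with $\|\cdot\|_{H^s}$, and a Sobolev inequality $\|u\|_{p+1}^{p+1}\le C_0\|u\|^{p+1}$, one obtains
\begin{equation*}
I_{V,\lambda}(u)\ge\frac12\|u\|^2-\frac{C_0}{p+1}\|u\|^{p+1}=\|u\|^2\Big(\frac12-\frac{C_0}{p+1}\|u\|^{p-1}\Big),\qquad\forall\,\lambda\in\Lambda .
\end{equation*}
Since $p>2$, one may fix $\rho>0$, depending only on $p$ and $C_0$ (hence independent of $\lambda$), so that $I_{V,\lambda}(u)\ge0$ whenever $\|u\|\le\rho$ and $I_{V,\lambda}(u)\ge m_0$ whenever $\|u\|=\rho$, with $m_0>0$ independent of $\lambda\in\Lambda$.

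For $(i)$, fix any $u\in H^s(\mathbb{R}^3)\setminus\{0\}$. Formulas \eqref{equ3-2}--\eqref{equ3-3} give
\begin{equation*}
I_{V,\lambda}(u_\theta)=\frac{\theta^{6s-3}}{2}\|(-\Delta)^{\frac{s}{2}}u\|_2^2+\frac{\theta^{4s-3}}{2}\int_{\mathbb{R}^3}V(x/\theta)u^2\,{\rm d}x+\frac{\theta^{6s-3}}{4}\Psi(u)-\frac{\lambda\,\theta^{2s(p+1)-3}}{p+1}\|u\|_{p+1}^{p+1}.
\end{equation*}
By $(V_2)$ the potential term is bounded above by $\frac12\theta^{4s-3}V_\infty\|u\|_2^2$, and since $\lambda\mapsto I_{V,\lambda}(u_\theta)$ is nonincreasing (as $B\ge0$) it suffices to make $I_{V,\delta}(u_\theta)<0$. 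Because $p>2$ forces $2s(p+1)-3>6s-3>4s-3>0$ (here $s>\frac34$ is used), the last term dominates when $\theta\to+\infty$, so $I_{V,\delta}(u_\theta)\to-\infty$; moreover $\|u_\theta\|\to\infty$ because $\|\cdot\|$ is equivalent to $\|\cdot\|_{H^s}$ and $\|(-\Delta)^{\frac s2}u_\theta\|_2^2=\theta^{6s-3}\|(-\Delta)^{\frac s2}u\|_2^2\to\infty$. Choosing $\theta$ large and setting $v_0:=u_\theta$, we get $v_0\ne0$, $\|v_0\|>\rho$, and $I_{V,\lambda}(v_0)\le I_{V,\delta}(v_0)<0$ for every $\lambda\in\Lambda$, which is $(i)$.

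For $(ii)$, since $I_{V,\lambda}(0)=0$ and $I_{V,\lambda}(v_0)<0$ we have $\max\{I_{V,\lambda}(0),I_{V,\lambda}(v_0)\}=0$, so it suffices to show $c_\lambda\ge m_0>0$ for all $\lambda\in\Lambda$. For any $\gamma\in\Gamma$ the map $t\mapsto\|\gamma(t)\|$ is continuous with $\|\gamma(0)\|=0<\rho<\|v_0\|=\|\gamma(1)\|$, so there is $t_0\in(0,1)$ with $\|\gamma(t_0)\|=\rho$; by the first step, $\max_{t\in[0,1]}I_{V,\lambda}(\gamma(t))\ge I_{V,\lambda}(\gamma(t_0))\ge m_0$. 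Taking the infimum over $\Gamma$ yields $c_\lambda\ge m_0>0$, the desired strict inequality.

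There is no genuine obstacle here; the only point that must be watched — and the reason $\Lambda$ is taken to be the compact interval $[\delta,1]$ bounded away from $0$ — is the uniformity in $\lambda$ of both facts: the divergence $I_{V,\lambda}(u_\theta)\to-\infty$ needs $\lambda\ge\delta$ to keep the coefficient of the dominant negative term bounded below, while the barrier $m_0$ needs $\lambda\le1$ to bound the nonlinear term from above. The rest is the routine mountain–pass verification.
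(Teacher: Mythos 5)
Your argument is correct and follows essentially the same route as the paper: for $(i)$ you bound $I_{V,\lambda}$ from above by the $\lambda=\delta$, $V\le V_\infty$ functional and let the dilation $\theta^{2s}u(\theta x)$ drive it to $-\infty$ (the paper does this via $I_{V,\lambda}\le I^\infty_\lambda$ and Lemma \ref{lem3-1}), and for $(ii)$ you use the uniform Sobolev lower bound near the origin plus the intermediate value theorem on $t\mapsto\|\gamma(t)\|$, exactly as in the paper. Your explicit check that $\|v_0\|>\rho$ and the explicit uniform barrier $m_0$ are details the paper leaves implicit, but they are the same verification.
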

\begin{proof}
For $v\in H^s(\mathbb{R}^3)\backslash\{0\}$ fixed, and any $\lambda\in\Lambda$, we have
\begin{equation*}
I_{V,\lambda}(v)\leq I_{\lambda}^{\infty}(v)=\frac{1}{2}\int_{\mathbb{R}^3}|(-\Delta )^{\frac{s}{2}}v|^2+V_{\infty}v^2\,{\rm d}x+\frac{1}{4}\int_{\mathbb{R}^3}\phi_v^sv^2\,{\rm d}x-\frac{\delta}{p+1}\int_{\mathbb{R}^3}|v|^{p+1}\,{\rm d}x.
\end{equation*}
Set $v_{\theta}=\theta^{2s}v(\theta x)$, $\forall \theta>0$. By Lemma \ref{lem3-1}, then
\begin{equation*}
I_{\lambda}^{\infty}(v_{\theta})\rightarrow-\infty\quad \text{as}\,\, \theta\rightarrow+\infty.
\end{equation*}
Hence, take $v_0\:= v_{\theta}$ for $\theta$ large, we have that $I_{V,\lambda}(v_0)\leq I_{\lambda}^{\infty}(v_0)<0$.

$(ii)$ By the Sobolev embedding theorem, we have
\begin{equation*}
I_{V,\lambda}(v)\geq\frac{1}{2}\|v\|^2-\frac{1}{p+1}\int_{\mathbb{R}^3}|v|^{p+1}\,{\rm d}x\geq\frac{1}{2}\|v\|^2-\frac{C}{p+1}\|v\|^{p+1}.
\end{equation*}
Since $p>2$, we see that there exist $\beta>0$ and $r_0>0$ such that
\begin{equation*}
I_{V,\lambda}(v)\geq\beta>0,\quad \forall \,\, \|v\|=r_0, \,\, \text{for any}\,\, \lambda\in\Lambda.
\end{equation*}
Therefore, for any $\gamma\in\Gamma$, clearly, there must exists a $t_0\in(0,1)$ such that $\|\gamma(t_0)\|=r_0$ and thus
\begin{equation*}
\max_{t\in[0,1]}I_{V,\lambda}(\gamma(t))\geq I_{V,\lambda}(\gamma(t_0))\geq\beta>\max\{I_{V,\lambda}(0),\,\, I_{V,\lambda}(v_0)\}
\end{equation*}
which implies that $c_{\lambda}>0$.
\end{proof}

Using Theorem \ref{thm1-2} , we get that for a.e. $\lambda\in\Lambda$, there exists a bounded sequence $\{u_n\}\subset H^s(\mathbb{R}^3)$ (for simplicity, we denote $\{u_n\}$ instead of $\{u_n(\lambda)\}$ ) such that
\begin{equation*}
I_{V,\lambda}(u_n)\rightarrow c_{\lambda},\quad I_{V,\lambda}'(u_n)\rightarrow0.
\end{equation*}
Next, we aim to prove the strongly convergence of the above sequence $\{u_n\}$ in $H^s(\mathbb{R}^3)$.

By Theorem \ref{thm1-3}, we see that for any $\lambda\in\Lambda$, the limit problem corresponded to problem \eqref{main}
\begin{equation}\label{equ4-1}
\left\{
  \begin{array}{ll}
    (-\Delta)^su+V_{\infty}u+\phi(x)u=\lambda|u|^{p-1}u, & \hbox{in $\mathbb{R}^3$,} \\
    (-\Delta)^s\phi=u^2,& \hbox{in $\mathbb{R}^3$}
  \end{array}
\right.
\end{equation}
with $2<p<2_s^{\ast}-1$, has a ground state solution in $H^s(\mathbb{R}^3)$, i.e. for any $\lambda\in\Lambda$,
\begin{equation*}
c_{\lambda}^{\infty}\:=\inf_{u\in\mathcal{M}_{\lambda}^{\infty}}I_{\lambda}^{\infty}(u)
\end{equation*}
can be achieved at some $u_{\lambda}^{\infty}\in\mathcal{M}_{\lambda}^{\infty}$ and $(I_{\lambda}^{\infty})'(u_{\lambda}^{\infty})=0$, where
\begin{equation*}
I_{\lambda}^{\infty}(u)=\frac{1}{2}\int_{\mathbb{R}^3}|(-\Delta )^{\frac{s}{2}}u|^2+V_{\infty}u^2\,{\rm d}x+\frac{1}{4}\int_{\mathbb{R}^3}\phi_u^su^2\,{\rm d}x-\frac{\lambda}{p+1}\int_{\mathbb{R}^3}|u|^{p+1}\,{\rm d}x,
\end{equation*}
\begin{equation*}
\mathcal{M}_{\lambda}^{\infty}=\{u\in H^s(\mathbb{R}^3)\backslash\{0\}\,\,\Big|\,\, \mathcal{G}_{\lambda}^{\infty}(u)=0\}
\end{equation*}
and
\begin{align*}
\mathcal{G}_{\lambda}^{\infty}(u)&=\frac{6s-3}{2}\int_{\mathbb{R}^3}|(-\Delta)^{\frac{s}{2}}u|^2\,{\rm d}x+\frac{4s-3}{2}\int_{\mathbb{R}^3}V_{\infty}|u|^2\,{\rm d}x\\
&+\frac{6s-3}{4}\int_{\mathbb{R}^3}\phi_{u}^su^2\,{\rm d}x-\lambda\frac{2s(p+1)-3}{p+1}\int_{\mathbb{R}^3}|u|^{p+1}\,{\rm d}x.
\end{align*}

\begin{lemma}\label{lem4-2}(Global Compactness)
Assume that $(V_1)$ and $(V_2)$ hold and for any $\lambda\in\Lambda$, let $\{u_n\}$ be a bounded $(PS)_{c_{\lambda}}$ sequence for the functional $I_{V,\lambda}$. Then there exist subsequence of $\{u_n\}$, still denoted $\{u_n\}$ and integer $l\in\mathbb{N}\cup\{0\}$, sequence $\{y_n^k\}\subset\mathbb{R}^3$, $w^k\in H^s(\mathbb{R}^3)$ for $1\leq k\leq l$ such that\\
$(i)$ $u_n\rightharpoonup u_0$ with $I_{V,\lambda}'(u_0)=0$;\\
$(ii)$ $|y_n^k|\rightarrow+\infty$ and $|y_n^k-y_n^{k'}|\rightarrow+\infty$ for $k\neq k'$;\\
$(iii)$ $w^k\neq0$ and $(I_{\lambda}^{\infty})'(w^k)=0$ for $1\leq k\leq l$;\\
$(iv)$ $\|u_n-u_0-\sum_{k=1}^lw^k(\cdot-y_n^k)\|\rightarrow0$;\\
$(v)$ $I_{V,\lambda}(u_n)\rightarrow I_{V,\lambda}(u_0)+\sum_{k=1}^l I_{\lambda}^{\infty}(w^k)$.\\
Here we agree that in the case $l=0$ the above holds without $\{w^k\}$ and $\{y_n^k\}$.
\end{lemma}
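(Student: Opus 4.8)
The plan is to run the classical iterative ``bubbling''/profile-decomposition argument of Struwe and Benci--Cerami, controlling the nonlocal Poisson term by the Brezis--Lieb type splitting of Lemma \ref{lem2-2}. First, since $\{u_n\}$ is bounded, up to a subsequence $u_n\rightharpoonup u_0$ in $H^s(\mathbb{R}^3)$, $u_n\to u_0$ in $L^r_{\mathrm{loc}}(\mathbb{R}^3)$ for $r\in[2,2_s^{\ast})$ and a.e.\ in $\mathbb{R}^3$; passing to the limit in $\langle I_{V,\lambda}'(u_n),\varphi\rangle$ for $\varphi\in C_0^\infty(\mathbb{R}^3)$ --- the subcritical term by a.e.\ convergence, the nonlocal term because $\phi_{u_n}^s\rightharpoonup\phi_{u_0}^s$ in $\mathcal{D}^{s,2}(\mathbb{R}^3)$ by Lemma \ref{lem2-1}(5) while $u_n\varphi\to u_0\varphi$ in $L^{(2_s^{\ast})'}(\mathbb{R}^3)$ --- gives $I_{V,\lambda}'(u_0)=0$, which is $(i)$.

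Next I would set $v_n^1=u_n-u_0\rightharpoonup0$. Combining the Brezis--Lieb lemma for $\|\cdot\|_{p+1}^{p+1}$, bilinearity of the quadratic part, Lemma \ref{lem2-2} for $\Psi$ and $\Psi'$, and the fact that $\int_{\mathbb{R}^3}(V(x)-V_\infty)(v_n^1)^2\,{\rm d}x\to0$ (split over $B_R$, where $v_n^1\to0$ in $L^2$, and its complement, where $|V-V_\infty|$ is uniformly small for large $R$ by $(V_2)$), one gets $I_{V,\lambda}(u_n)=I_{V,\lambda}(u_0)+I_\lambda^\infty(v_n^1)+o(1)$ and $(I_\lambda^\infty)'(v_n^1)\to0$ in $(H^s(\mathbb{R}^3))'$. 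If $v_n^1\to0$ strongly, the process stops with $l=0$; otherwise $\limsup_n\|v_n^1\|>0$ and $v_n^1$ cannot vanish, since vanishing would yield, via Lemma \ref{lem2-3}, $v_n^1\to0$ in every $L^r$, $2<r<2_s^{\ast}$, hence $\Psi(v_n^1)\to0$ by \eqref{equ2-6}, and then testing $(I_\lambda^\infty)'(v_n^1)\to0$ with $v_n^1$ would force $\|v_n^1\|\to0$, a contradiction. So there are $\delta>0$ and $y_n^1$ with $\int_{B_1(y_n^1)}|v_n^1|^2\,{\rm d}x\ge\delta$, and $|y_n^1|\to\infty$ since $v_n^1\to0$ in $L^2_{\mathrm{loc}}$.

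Putting $w_n^1=v_n^1(\cdot+y_n^1)\rightharpoonup w^1\neq0$, the translation invariance of $I_\lambda^\infty$ together with Lemma \ref{lem2-1}(5) lets one pass to the limit in $\langle (I_\lambda^\infty)'(w_n^1),\varphi\rangle=\langle (I_\lambda^\infty)'(v_n^1),\varphi(\cdot-y_n^1)\rangle\to0$ to obtain $(I_\lambda^\infty)'(w^1)=0$. Setting $v_n^2=u_n-u_0-w^1(\cdot-y_n^1)\rightharpoonup0$ and applying Lemma \ref{lem2-2} and Brezis--Lieb to $w_n^1-w^1=v_n^2(\cdot+y_n^1)$ gives $I_{V,\lambda}(u_n)=I_{V,\lambda}(u_0)+I_\lambda^\infty(w^1)+I_\lambda^\infty(v_n^2)+o(1)$ and $(I_\lambda^\infty)'(v_n^2)\to0$. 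Iterating with $v_n^k$ in place of $v_n^1$ produces $y_n^k$ (with $|y_n^k|\to\infty$ and $|y_n^k-y_n^{k'}|\to\infty$ for $k'<k$, from the inductively maintained property $v_n^k(\cdot+y_n^{k'})\rightharpoonup0$) and $w^k\neq0$ with $(I_\lambda^\infty)'(w^k)=0$, which is $(ii)$ and $(iii)$. The iteration terminates after finitely many steps $l$: every nontrivial critical point of $I_\lambda^\infty$ satisfies $\|w^k\|^2\ge\rho>0$ (test $(I_\lambda^\infty)'(w^k)=0$ with $w^k$: $\|w^k\|_{V_\infty}^2\le\|w^k\|_{V_\infty}^2+\Psi(w^k)=\lambda\|w^k\|_{p+1}^{p+1}\le C\|w^k\|_{V_\infty}^{p+1}$ and $p>1$), while the iterated norm splitting $\|v_n^{k+1}\|_{V_\infty}^2=\|v_n^k\|_{V_\infty}^2-\|w^k\|_{V_\infty}^2+o(1)$ forces $\sum_k\|w^k\|_{V_\infty}^2\le\limsup_n\|v_n^1\|_{V_\infty}^2<\infty$. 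Hence $v_n^{l+1}\to0$ in $H^s(\mathbb{R}^3)$, which is $(iv)$, and summing the energy identities over $k=1,\dots,l$ gives $(v)$.

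The main obstacle is the nonlocal term: one must know that the Riesz interaction between the bulk $u_0$ and the escaping profiles $w^k(\cdot-y_n^k)$, and between distinct profiles, is asymptotically negligible. This is precisely what Lemma \ref{lem2-2} delivers once it is applied to the translated sequences $w_n^k-w^k$ --- which is where the restriction $s\in(\frac34,1)$ is used --- so the splitting of $\Psi$ and $\Psi'$ becomes automatic and no direct estimate of the interaction kernels is required. The remaining care is only in replacing $V$ by $V_\infty$ via $(V_2)$ and in bookkeeping the mutual divergence of the centers $\{y_n^k\}$.
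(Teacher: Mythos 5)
Your proposal is correct and follows essentially the same route as the paper: weak limit plus Brezis--Lieb splittings (Lemma \ref{lem2-2} for the nonlocal term), the vanishing/nonvanishing dichotomy for $v_n^k$, translation to profiles solving the limit problem via $(V_2)$, and iteration with the norm-splitting bookkeeping. The only differences are cosmetic --- you assert dual-norm convergence $(I_{\lambda}^{\infty})'(v_n^k)\to0$ where the paper tests against translated compactly supported functions, and you make explicit the uniform lower bound $\|w^k\|\geq\rho>0$ for nontrivial critical points of $I_{\lambda}^{\infty}$ that the paper leaves implicit in its termination argument.
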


\begin{proof}
Step 1. Since $\{u_n\}$ is bounded in $H^s(\mathbb{R}^3)$, we may assume that, up to a subsequence, $u_n\rightharpoonup u_0$ weakly in $H^s(\mathbb{R}^3)$, $u_n\rightarrow u_0$ in $L_{loc}^r(\mathbb{R}^3)$ for $1\leq r<2_s^{\ast}$ and $u_n\rightarrow u_0$ a.e. in $\mathbb{R}^3$. Let us prove that $I_{V,\lambda}'(u_0)=0$. Noting that $C_0^{\infty}(\mathbb{R}^3)$ is dense in $H^s(\mathbb{R}^3)$, it sufficient to check that $\langle I_{V,\lambda}'(u_0),\varphi\rangle=0$ for all $\varphi\in C_0^{\infty}(\mathbb{R}^3)$. Observe that
\begin{align}\label{equ4-2}
\langle I_{V,\lambda}'(u_n),\varphi\rangle-\langle I_{V,\lambda}'(u_0),\varphi\rangle&=\langle u_n-u_0,\varphi\rangle+\int_{\mathbb{R}^3}(\phi_{u_n}^su_n-\phi_{u_0}^su_0)\varphi\,{\rm d}x\nonumber\\
&-\lambda\int_{\mathbb{R}^3}(|u_n|^{p-1}u_n-|u|^{p-1}u)\varphi\,{\rm d}x.
\end{align}
In view of $u_n\rightharpoonup u_0$ weakly in $H^s(\mathbb{R}^3)$, then $\langle u_n-u_0,\varphi\rangle\rightarrow0$.

By H\"{o}lder's inequality and $|a^m-b^m|\leq L|a-b|^m$ for $a,b\geq0$, $m\geq1$ and $L\geq1$, we have
\begin{align*}
&\Big|\int_{\mathbb{R}^3}(|u_n|^{p-1}u_n-|u_0|^{p-1}u_0)\varphi\,{\rm d}x\Big|\leq\int_{\mathbb{R}^3}|u_n|^{p-1}|u_n-u_0||\varphi|\,{\rm d}x+\int_{\mathbb{R}^3}\Big||u_n|^{p-1}-|u_0|^{p-1}\Big||u_0\varphi|\,{\rm d}x\\
&\leq\|\varphi\|_{\infty}\|u_n\|_{L^p}^{p-1}\Big(\int_{{\rm supp}\varphi}|u_n-u_0|^p\,{\rm d}x\Big)^{\frac{1}{p}}+C\|\varphi\|_{\infty}\|u_0\|_{L^p}\Big(\int_{{\rm supp}\varphi}|u_n-u_0|^p\,{\rm d}x\Big)^{\frac{p-1}{p}}\\
&\rightarrow0.
\end{align*}
By \eqref{equ2-1}, $(ii)$ of Lemma \ref{lem2-2} and H\"{o}lder's inequality, we deduce that
\begin{align*}
&\int_{\mathbb{R}^3}\phi_{u_n}^su_n\varphi\,{\rm d}x-\int_{\mathbb{R}^3}\phi_{u_0}^su_0\varphi\,{\rm d}x=\int_{\mathbb{R}^3}\phi_{u_n-u_0}^s(u_n-u_0)\varphi\,{\rm d}x+o_n(1)\\
&\leq\|\varphi\|_{\infty}\Big(\int_{{\rm supp}\varphi}|u_n-u_0|^{\frac{6}{3+2s}}\,{\rm d}x\Big)^{\frac{3+2s}{6}}\Big(\int_{\mathbb{R}^3}|\phi_{u_n-u_0}^s|^{\frac{6}{3-2s}}\,{\rm d}x\Big)^{\frac{3-2s}{6}}+o_n(1)\\
&\leq S_s\|\varphi\|_{\infty}\Big(\int_{{\rm supp}\varphi}|u_n-u_0|^{\frac{6}{3+2s}}\,{\rm d}x\Big)^{\frac{3+2s}{6}}\|\phi_{u_n-u_0}^s\|_{\mathcal{D}^{s,2}}+o_n(1)\\
&\leq S_s\|\varphi\|_{\infty}\Big(\int_{{\rm supp}\varphi}|u_n-u_0|^{\frac{6}{3+2s}}\,{\rm d}x\Big)^{\frac{3+2s}{6}}\|u_n-u_0\|^2+o_n(1)\\
&\leq CS_s\Big(\int_{{\rm supp}\varphi}|u_n-u_0|^{\frac{6}{3+2s}}\,{\rm d}x\Big)^{\frac{3+2s}{6}}\\
&\rightarrow0.
\end{align*}
Thus recalling that $I_{V,\lambda}'(u_n)\rightarrow0$, we indeed have $I_{V,\lambda}'(u_0)=0$. Next we prove that $I_{V,\lambda}(u_0)\geq0$. Set
\begin{equation*}
a_0=\int_{\mathbb{R}^3}|(-\Delta)^{\frac{s}{2}}u_0|^2\,{\rm d}x,\,\, b_0=\int_{\mathbb{R}^3}V(x)|u_0|^2\,{\rm d}x,\,\,c_0=\int_{\mathbb{R}^3}\phi_{u_0}^su_0^2\,{\rm d}x
\end{equation*}
\begin{equation*}
d_0=\frac{\lambda}{p+1}\int_{\mathbb{R}^3}|u_0|^{p+1}\,{\rm d}x,\,\, e_0=\int_{\mathbb{R}^3}(x,\nabla V(x))|u_0|^2\,{\rm d}x.
\end{equation*}
Then, we have the following linear systems of $a_0,b_0,c_0,d_0,e_0$:
\begin{equation}\label{equ4-3}
\left\{
  \begin{array}{ll}
    \frac{1}{2}a_0+\frac{1}{2}b_0+\frac{1}{4}c_0-d_0=I_{V,\lambda}(u_0), & \hbox{} \\
    a_0+b_0+c_0-(p+1)d_0=0, & \hbox{} \\
    \frac{3-2s}{2}a_0+\frac{3}{2}b_0+\frac{3+2s}{4}c_0-3d_0+\frac{1}{2}e_0=0, & \hbox{}
  \end{array}
\right.
\end{equation}
where the first equation comes from the definition of $I_{V,\lambda}(u_0)$, the second one is $\langle I_{V,\lambda}'(u_0),u_0\rangle=0$, and the last one is from the Pohozaev identity in Proposition \ref{pro2-1}. From the above relation, (proceed as: the second equation minus the two times of the first equation, the last equation minus the $(3-2s)$ times of the first equation), using the assumption $(V_1)$, we can deduce that
\begin{equation*}
(6s-3)I_{V,\lambda}(u_0)=\frac{1}{2}(2sb_0+e_0)+2s(p-2)d_0\geq0.
\end{equation*}

Step 2. Set $v_n^1=u_n-u_0$, then we have $v_n^1\rightharpoonup 0$ weakly in $H^s(\mathbb{R}^3)$. Let us define
\begin{equation*}
\delta:=\limsup_{n\rightarrow+\infty}\sup_{y\in\mathbb{R}^3}\int_{B_1(y)}|v_n^1|^2\,{\rm d}x.
\end{equation*}

Case 1 (Vanishing). $\delta=0$. Namely,
\begin{equation}\label{equ4-4}
\sup_{y\in\mathbb{R}^3}\int_{B_1(y)}|v_n^1|^2\,{\rm d}x\rightarrow0.
\end{equation}
Then $v_n^1\rightarrow0$ in $H^s(\mathbb{R}^3)$ and Lemma \ref{lem4-2} holds with $l=0$.

Indeed, by similar computation as \eqref{equ4-2}, we have
\begin{align*}
\|v_n^1\|^2&=\langle I_{V,\lambda}'(u_n),v_n^1\rangle-\langle I_{V,\lambda}'(u_0),v_n^1\rangle+\int_{\mathbb{R}^3}\phi_{u_0}^su_0v_n^1\,{\rm d}x\\
&-\int_{\mathbb{R}^3}\phi_{u_n}^su_nv_n^1\,{\rm d}x+\lambda\int_{\mathbb{R}^3}(|u_n|^{p-1}u_n-|u_0|^{p-1}u_0)v_n^1\,{\rm d}x\\
&=\int_{\mathbb{R}^3}\phi_{u_0}^su_0v_n^1\,{\rm d}x-\int_{\mathbb{R}^3}\phi_{u_n}^su_nv_n^1\,{\rm d}x+\lambda\int_{\mathbb{R}^3}|u_n|^{p-1}u_nv_n^1\,{\rm d}x\\
&-\lambda\int_{\mathbb{R}^3}|u_0|^{p-1}u_0v_n^1\,{\rm d}x+o_n(1).
\end{align*}
By \eqref{equ4-4} and using the vanishing Lemma \ref{lem2-3}, we get $v_n^1\rightarrow0$ in $L^r(\mathbb{R}^3)$ for $2<r<2_s^{\ast}$.
By \eqref{equ2-1}, $(ii)$ of Lemma \ref{lem2-2} and H\"{o}lder's inequality, we have that
\begin{align*}
&\int_{\mathbb{R}^3}\phi_{u_n}^su_nv_n^1\,{\rm d}x-\int_{\mathbb{R}^3}\phi_{u_0}^su_0v_n^1\,{\rm d}x=\int_{\mathbb{R}^3}\phi_{v_n^1}^s(v_n^1)^2\,{\rm d}x+o_n(1)\\
&\leq\Big(\int_{\mathbb{R}^3}|v_n^1|^{\frac{12}{3+2s}}\,{\rm d}x\Big)^{\frac{3+2s}{6}}\Big(\int_{\mathbb{R}^3}|\phi_{v_n^1}^s|^{\frac{6}{3-2s}}\,{\rm d}x\Big)^{\frac{3-2s}{6}}+o_n(1)\\
&\leq S_s\|v_n^1\|_{L^{\frac{12}{3+2s}}}^2\|\phi_{v_n^1}^s\|_{\mathcal{D}^{s,2}}+o_n(1)\\
&\leq CS_s\|v_n^1\|_{L^{\frac{12}{3+2s}}}^2+o_n(1)\\
&\rightarrow0.
\end{align*}
Using H\"{o}lder's inequality, we deduce that
\begin{align*}
\Big|\int_{\mathbb{R}^3}(|u_n|^{p-1}u_n-|u_0|^{p-1}u_0)v_n^1\,{\rm d}x\Big|\leq(\|u_n\|_{p+1}^p+\|u_0\|_{p+1}^p)\|v_n^1\|_{p+1}\rightarrow0.
\end{align*}
Therefore, $\|v_n^1\|\rightarrow0$.

Case 2 (Nonvanishing). $\delta>0$. we may assume that there exists $y_n^1\in\mathbb{R}^3$ such that
\begin{equation*}
\int_{B_1(y_n^1)}|v_n^1|^2\,{\rm d}x>\frac{\delta}{2}>0,
\end{equation*}
Then, after extracting a subsequence if necessary, we have for a $w^1\in H^s(\mathbb{R}^3)$,
\begin{equation*}
|y_n^1|\rightarrow+\infty,\,\,v_n^1(\cdot+y_n^1)\rightharpoonup w^1\neq0,\,\,(I_{\lambda}^{\infty})'(w^1)=0.
\end{equation*}

Let us define $\widetilde{v_n^1}(\cdot):=v_n^1(\cdot+y_n^1)$. Then $\widetilde{v_n^1}$ is bounded in $H^s(\mathbb{R}^3)$ and we may assume that $\widetilde{v_n^1}\rightharpoonup w^1$ in $H^s(\mathbb{R}^3)$ and $\widetilde{v_n^1}\rightarrow w^1$ in $L_{loc}^r(\mathbb{R}^3)$ and $\widetilde{v_n^1}\rightarrow w^1$ a.e. in $\mathbb{R}^3$. Since
\begin{equation*}
\int_{B_1(0)}|\widetilde{v_n^1}|^2\,{\rm d}x>\frac{\delta}{2},
\end{equation*}
then
\begin{equation*}
\int_{B_1(0)}|w^1|^2\,{\rm d}x>\frac{\delta}{2},
\end{equation*}
and $w^1\neq0$. But it follows from $v_n^1\rightharpoonup0$ in $H^s(\mathbb{R}^3)$ that $\{y_n^1\}$ must be unbounded. Up to a subsequence, we suppose that $|y_n^1|\rightarrow+\infty$. Now we will prove $(I_{\lambda}^{\infty})'(w^1)=0$. Similar to the proof of \eqref{equ4-2}, we see that
\begin{equation*}
\langle(I_{\lambda}^{\infty})'(\widetilde{v_n^1}),\varphi\rangle-\langle(I_{\lambda}^{\infty})'(w^1),\varphi\rangle\rightarrow0
\end{equation*}
for any fixed $\varphi\in C_0^{\infty}(\mathbb{R}^3)$. Therefore, it suffices to show that $\langle(I_{\lambda}^{\infty})'(\widetilde{v_n^1}),\varphi\rangle\rightarrow0$ for any fixed $\varphi\in C_0^{\infty}(\mathbb{R}^3)$. We have
\begin{align}\label{equ4-5}
&\langle I_{V,\lambda}'(v_n^1),\varphi(\cdot-y_n^1)\rangle=\int_{\mathbb{R}^3}\int_{\mathbb{R}^3}\frac{(v_n^1(x)-v_n^1(y))(\varphi(x-y_n^1)-\varphi(y-y_n^1))}{|x-y|^{3+2s}}\,{\rm d}x{\rm d}y\nonumber\\
&+\int_{\mathbb{R}^3}V(x)v_n^1(x)\varphi(x-y_n^1)\,{\rm d}x+\int_{\mathbb{R}^3}\phi_{v_n^1}^sv_n^1(x)\varphi(x-y_n^1)\,{\rm d}x\nonumber\\
&-\lambda\int_{\mathbb{R}^3}|v_n^1(x)|^{p-1}v_n^1(x)\varphi(x-y_n^1)\,{\rm d}x\nonumber\\
&=\int_{\mathbb{R}^3}\int_{\mathbb{R}^3}\frac{(\widetilde{v_n^1}(x)-\widetilde{v_n^1}(y))(\varphi(x)-\varphi(y))}{|x-y|^{3+2s}}\,{\rm d}x{\rm d}y\nonumber\\
&+\int_{\mathbb{R}^3}V(x+y_n^1)\widetilde{v_n^1}(x)\varphi(x)\,{\rm d}x+\int_{\mathbb{R}^3}\phi_{\widetilde{v_n^1}}^s\widetilde{v_n^1}\varphi(x)\,{\rm d}x-\lambda\int_{\mathbb{R}^3}|\widetilde{v_n^1}(x)|^{p-1}\widetilde{v_n^1}(x)\varphi(x)\,{\rm d}x.
\end{align}
Since $v_n^1\rightharpoonup0$ in $H^s(\mathbb{R}^3)$, similar as the proof of \eqref{equ4-2}, we obtain that
\begin{equation*}
\langle I_{V,\lambda}'(v_n^1),\varphi(\cdot-y_n^1)\rangle-\langle I_{V,\lambda}'(0),\varphi(\cdot-y_n^1)\rangle\rightarrow0.
\end{equation*}
which implies that
\begin{equation*}
\langle I_{V,\lambda}'(v_n^1),\varphi(\cdot-y_n^1)\rangle\rightarrow0,
\end{equation*}
from \eqref{equ4-5}, we get
\begin{align}\label{equ4-6}
\int_{\mathbb{R}^3}\int_{\mathbb{R}^3}&\frac{(\widetilde{v_n^1}(x)-\widetilde{v_n^1}(y))(\varphi(x)-\varphi(y))}{|x-y|^{3+2s}}\,{\rm d}x{\rm d}y+\int_{\mathbb{R}^3}V(x+y_n^1)\widetilde{v_n^1}(x)\varphi(x)\,{\rm d}x\nonumber\\
&+\int_{\mathbb{R}^3}\phi_{\widetilde{v_n^1}}^s\widetilde{v_n^1}\varphi(x)\,{\rm d}x-\lambda\int_{\mathbb{R}^3}|\widetilde{v_n^1}(x)|^{p-1}\widetilde{v_n^1}(x)\varphi(x)\,{\rm d}x
\rightarrow0.
\end{align}

By $(V_2)$, for every $\varepsilon>0$, there exists $R>0$ such that
\begin{equation*}
|V(x)-V_{\infty}|< \varepsilon,\quad \forall\, |x|\geq R.
\end{equation*}
We may assume that ${\rm supp}\varphi\subset B_{R_0}(0)$ with some $R_0>0$. Thus, choosing $n$ large enough such that $|y_n^1|\geq R+R_0$, we have
\begin{equation*}
|x+y_n^1|\geq |y_n^1|-|x|\geq |y_n^1|-R_0\geq R
\end{equation*}
which implies that
\begin{equation*}
|V(x+y_n^1)-V_{\infty}|<\varepsilon
\end{equation*}
for $n$ large enough. Therefore, for $n$ large enough, by H\"{o}lder's inequality, we have
\begin{align}\label{equ4-7}
\Big|\int_{\mathbb{R}^3}V(x+y_n^1)\widetilde{v_n^1}(x)\varphi(x)\,{\rm d}x&-\int_{\mathbb{R}^3}V_{\infty}\widetilde{v_n^1}(x)\varphi(x)\,{\rm d}x\Big|\nonumber\\
&\leq\int_{{\rm supp}\varphi}|V(x+y_n^1)-V_{\infty}||\widetilde{v_n^1}(x)\varphi(x)|\,{\rm d}x\nonumber\\
&\leq C\varepsilon.
\end{align}
Thus, we use \eqref{equ4-6} minus $\langle(I_{\lambda}^{\infty})'(\widetilde{v_n^1}),\varphi\rangle$, by \eqref{equ4-7}, we deduce that
\begin{equation*}
\langle(I_{\lambda}^{\infty})'(\widetilde{v_n^1}),\varphi\rangle\rightarrow0.
\end{equation*}

Step 3. In the following, we show that
\begin{equation}\label{equ4-8}
I_{V,\lambda}(v_n^1)\rightarrow c_{\lambda}-I_{V,\lambda}(u_0),\quad I_{V,\lambda}(u_n)-I_{V,\lambda}(u_0)-I_{\lambda}^{\infty}(v_n^1)\rightarrow0.
\end{equation}

Indeed, from Brezis-Lieb Lemma and $(i)$ of Lemma \ref{lem2-2}, we get
\begin{equation}\label{equ4-9}
\|v_n^1\|^2=\|u_n\|^2-\|u_0\|^2+o_n(1),\quad \|v_n^1\|_{L^{p+1}}^{p+1}=\|u_n\|_{L^{p+1}}^{p+1}-\|u_0\|_{L^{p+1}}^{p+1}+o_n(1),
\end{equation}
\begin{equation}\label{equ4-10}
\int_{\mathbb{R}^3}\phi_{v_n^1}^s(v_n^1)^2\,{\rm d}x=\int_{\mathbb{R}^3}\phi_{u_n}^su_n^2\,{\rm d}x-\int_{\mathbb{R}^3}\phi_{u_0}^su_0^2\,{\rm d}x+o_n(1)
\end{equation}
which implies that
\begin{equation*}
I_{V,\lambda}(v_n^1)\rightarrow c_{\lambda}+I_{V,\lambda}(u_0).
\end{equation*}
By simple computation, we  deduce that
\begin{align*}
I_{V,\lambda}&(u_n)-I_{\lambda}^{\infty}(v_n^1)=\langle u_n,u_0\rangle-\frac{1}{2}\|u_0\|^2+\frac{1}{2}\int_{\mathbb{R}^3}(V(x)-V_{\infty})(u_n-u_0)^2\,{\rm d}x\\
&+\frac{1}{4}\Big(\int_{\mathbb{R}^3}(\phi_{u_n}^su_n^2-\phi_{v_n^1}^s(v_n^1)^2)\,{\rm d}x\Big)-\frac{\lambda}{p+1}\Big(\int_{\mathbb{R}^3}(|u_n|^{p+1}-|v_n^1|^{p+1})\,{\rm d}x\Big)\\
&=I_{V,\lambda}(u_0)+\langle u_n-u_0,u_0\rangle+\frac{1}{2}\int_{\mathbb{R}^3}(V(x)-V_{\infty})(u_n-u_0)^2\,{\rm d}x\\
&+\frac{1}{4}\Big(\int_{\mathbb{R}^3}(\phi_{u_n}^su_n^2-\phi_{v_n^1}^s(v_n^1)^2-\phi_{u_0}^su_0^2)\,{\rm d}x\Big)\\
&-\frac{\lambda}{p+1}\Big(\int_{\mathbb{R}^3}(|u_n|^{p+1}-|v_n^1|^{p+1}-|u_0|^{p+1})\,{\rm d}x\Big)\\
\end{align*}
In view of $(V_2)$ and the locally compactness of Sobolev embedding, we have
\begin{equation*}
\int_{\mathbb{R}^3}(V(x)-V_{\infty})(u_n-u_0)^2\,{\rm d}x\rightarrow0.
\end{equation*}
Thus, using \eqref{equ4-9} and \eqref{equ4-10}, we conclude that
\begin{equation*}
I_{V,\lambda}(u_n)-I_{V,\lambda}(u_0)-I_{\lambda}^{\infty}(v_n^1)\rightarrow0.
\end{equation*}

Step 4. Let us set $v_n^2(\cdot):=v_n^1(\cdot)-w^1(\cdot-y_n^1)$, then $v_n^2\rightharpoonup0$ in $H^s(\mathbb{R}^3)$. From the Brezis-Lieb Lemma and Lemma \ref{lem2-2}, we get
\begin{align}\label{equ4-11}
\|v_n^2\|^2&=\|u_n-u_0-w^1(\cdot-y_n^1)\|^2=\|u_n-u_0\|^2-\|w^1(\cdot-y_n^1)\|^2+o_n(1)\nonumber\\
&=\|u_n\|^2-\|u_0\|^2-\|w^1(\cdot-y_n^1)\|^2+o_n(1),
\end{align}
\begin{align}\label{equ4-12}
\|v_n^2\|_{p+1}^{p+1}&=\|u_n-u_0-w^1(\cdot-y_n^1)\|_{p+1}^{p+1}=\|u_n-u_0\|_{p+1}^{p+1}-\|w^1(\cdot-y_n^1)\|_{p+1}^{p+1}+o_n(1)\nonumber\\
&=\|u_n\|_{p+1}^{p+1}-\|u_0\|_{p+1}^{p+1}-\|w^1\|_{p+1}^{p+1}+o_n(1),
\end{align}
\begin{align}\label{equ4-13}
&\int_{\mathbb{R}^3}\phi_{v_n^2}^s(v_n^2)^2\,{\rm d}x=\int_{\mathbb{R}^3}\phi_{v_n^1}^s(v_n^1)^2\,{\rm d}x-\int_{\mathbb{R}^3}\phi_{w^1(x-y_n^1)}^s(w^1(x-y_n^1))^2\,{\rm d}x+o_n(1)\nonumber\\
&=\int_{\mathbb{R}^3}\phi_{u_n}^su_n^2\,{\rm d}x-\int_{\mathbb{R}^3}\phi_{u_0}^su_0^2\,{\rm d}x-\int_{\mathbb{R}^3}\phi_{w^1}^s(w^1)^2\,{\rm d}x+o_n(1),
\end{align}
\begin{align}\label{equ4-14}
&\int_{\mathbb{R}^3}\phi_{v_n^2}^sv_n^2\varphi\,{\rm d}x=\int_{\mathbb{R}^3}\phi_{v_n^1}^sv_n^1\varphi\,{\rm d}x-\int_{\mathbb{R}^3}\phi_{w^1(x-y_n^1)}^sw^1(x-y_n^1)\varphi\,{\rm d}x+o_n(1)\nonumber\\
&=\int_{\mathbb{R}^3}\phi_{u_n}^su_n\varphi\,{\rm d}x-\int_{\mathbb{R}^3}\phi_{u_0}^su_0\varphi\,{\rm d}x-\int_{\mathbb{R}^3}\phi_{w^1(x-y_n^1)}^sw^1(x-y_n^1)\varphi\,{\rm d}x+o_n(1)
\end{align}
for any $\varphi\in (H^s(\mathbb{R}^3))'$ and
\begin{align}\label{equ4-15}
&\int_{\mathbb{R}^3}V(x)|v_n^2|^2\,{\rm d}x=\int_{\mathbb{R}^3}V(x)|v_n^1|^2\,{\rm d}x-\int_{\mathbb{R}^3}V(x)|w^1(x-y_n^1)|^2\,{\rm d}x+o_n(1)\\
&=\int_{\mathbb{R}^3}V(x)|u_n|^2\,{\rm d}x-\int_{\mathbb{R}^3}V(x)|u_0|^2\,{\rm d}x-\int_{\mathbb{R}^3}V(x)|w^1(x-y_n^1)|^2\,{\rm d}x+o_n(1).
\end{align}
By \eqref{equ4-11}-\eqref{equ4-15}, we can similarly check that
\begin{equation}\label{equ4-16}
\left\{
  \begin{array}{ll}
    I_{V,\lambda}(v_n^2)=I_{V,\lambda}(u_n)-I_{V,\lambda}(u_0)-I_{\lambda}^{\infty}(w^1)+o_n(1), & \hbox{} \\
    I_{\lambda}^{\infty}(v_n^2)=I_{V,\lambda}(v_n^1)-I_{\lambda}^{\infty}(w^1)+o_n(1), & \hbox{} \\
    \langle I_{V,\lambda}'(v_n^2),\varphi\rangle=\langle I_{V,\lambda}'(u_n),\varphi\rangle-\langle I_{V,\lambda}'(u_0),\varphi\rangle-\langle (I_{\lambda}^{\infty})'(w^1),\varphi\rangle+o_n(1)=o_n(1). & \hbox{}
  \end{array}
\right.
\end{equation}
Hence, by using \eqref{equ4-8}, we have
\begin{equation*}
I_{V,\lambda}(u_n)=I_{V,\lambda}(u_0)+I_{\lambda}^{\infty}(v_n^1)+o_n(1)=I_{V,\lambda}(u_0)+I_{\lambda}^{\infty}(w^1)+I_{\lambda}^{\infty}(v_n^2)+o_n(1).
\end{equation*}
From Lemma \ref{lem3-2}, we see that any critical point of $I_{\lambda}^{\infty}$ is less than zero and so $I_{\lambda}^{\infty}(w^1)\geq0$, and from Step 1, we know that $I_{V,\lambda}(u_0)\geq0$, thus it follows that
\begin{equation*}
I_{V,\lambda}(v_n^2)=I_{V,\lambda}(u_n)-I_{V,\lambda}(u_0)-I_{\lambda}^{\infty}(w^1)+o_n(1)\leq c_{\lambda}.
\end{equation*}

Similar to the argument in Step 2, let
\begin{equation*}
\delta_1=\limsup_{n\rightarrow+\infty}\sup_{y\in\mathbb{R}^3}\int_{B_1(y)}|v_n^2|^2\,{\rm d}x.
\end{equation*}
If vanishing occurs, then $\|v_n^2\|\rightarrow0$, i.e. $\|u_n-u_0-w^1(\cdot-y_n^1)\|\rightarrow0$ and thus Lemma \ref{lem4-2} holds with $k=1$. If nonvanishing occurs, then there exists a sequence $\{y_n^2\}\subset\mathbb{R}^3$ and $w^2\in H^s(\mathbb{R}^3)$ such that $\widetilde{v_n^2}(x)\:= v_n^2(x+y_n^2)\rightharpoonup w^2$ in $H^s(\mathbb{R}^3)$. By \eqref{equ4-16}, we have that $(I_{\lambda}^{\infty})'(w^2)=0$. Furthermore, $v_n^2\rightharpoonup0$ in $H^s(\mathbb{R}^3)$ implies that $|y_n^2|\rightarrow+\infty$ and $|y_n^1-y_n^2|\rightarrow+\infty$.
By iterating this procedure we obtain sequences of points $\{y_n^k\}\subset\mathbb{R}^3$ such that $|y_n^k|\rightarrow+\infty$ and $|y_n^k-y_n^{k'}|\rightarrow+\infty$ for $k\neq k'$ and $v_n^k=v_n^{k-1}-w^{k-1}(\cdot-y_n^{k-1})$ with $k\geq2$ such that
\begin{equation*}
v_n^k\rightharpoonup0\,\,\text{in}\,\, E\quad \text{and}\,\,(I_{\lambda}^{\infty})'(w^k)=0
\end{equation*}
and
\begin{equation}\label{equ4-17}
\left\{
  \begin{array}{ll}
    \|u_n\|^2-\|u_0\|^2-\sum_{j=1}^{k-1}\|w^j(\cdot-y_n^j)\|^2=\|u_n-u_0-\sum_{j=1}^{k-1}w^j(\cdot-y_n^j)\|^2+o_n(1), & \hbox{} \\
    I_{V,\lambda}(u_n)-I_{V,\lambda}(u_0)-\sum_{j=1}^{k-1}I_{\lambda}^{\infty}(w^j)-I_{\lambda}^{\infty}(v_n^k)=o_n(1). & \hbox{}
  \end{array}
\right.
\end{equation}
Since $\{u_n\}$ is bounded in $H^s(\mathbb{R}^3)$, \eqref{equ4-17} implies that the iteration stops at some finite index $l+1$. Therefore $v_n^{l+1}\rightarrow0$ in $H^s(\mathbb{R}^3)$, by \eqref{equ4-17}, it is easy to verify that conclusions $(iv)$ and $(v)$ hold. The proof is completed.

\end{proof}
Lemma \ref{lem4-2} generalizes Lemma 3.6 in \cite{ZZ} and Theorem 5.1 in \cite{JT}. Based on Lemma \ref{lem4-2}, we can prove the $(PS)_{c_{\lambda}}$ condition of the functional $I_{V,\lambda}$ holds. That is, we have the following result.

\begin{lemma}\label{lem4-3}
Assume that $(V_1)-(V_2)$ hold and $2<p<2_s^{\ast}$, let $\{u_n\}$ be a bounded $(PS)_{c_{\lambda}}$ sequence for $I_{V,\lambda}$. Then up to a subsequence, $\{u_n\}$ converges to a nontrivial critical point $u_{\lambda}$ of $I_{V,\lambda}$ with $I_{V,\lambda}(u_{\lambda})=c_{\lambda}$ for any $\lambda\in\Lambda$.
\end{lemma}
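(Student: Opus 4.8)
The strategy is to feed the given bounded $(PS)_{c_\lambda}$ sequence $\{u_n\}$ into the global compactness Lemma~\ref{lem4-2} and then force the number $l$ of escaping bubbles to be zero by an energy comparison. Lemma~\ref{lem4-2} produces $u_0\in H^s(\mathbb{R}^3)$ with $u_n\rightharpoonup u_0$ and $I_{V,\lambda}'(u_0)=0$, translations $y_n^k$ with $|y_n^k|\to+\infty$, and nontrivial critical points $w^k$ of $I_\lambda^\infty$ such that
\begin{equation*}
\Big\|u_n-u_0-\sum_{k=1}^l w^k(\cdot-y_n^k)\Big\|\longrightarrow0,\qquad c_\lambda=I_{V,\lambda}(u_0)+\sum_{k=1}^l I_\lambda^\infty(w^k).
\end{equation*}
From Step~1 of the proof of Lemma~\ref{lem4-2} we already know $I_{V,\lambda}(u_0)\ge0$ (this is where $(V_1)$ enters). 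Moreover each $w^k$, being a nontrivial critical point of $I_\lambda^\infty$, lies on the manifold $\mathcal{M}_\lambda^\infty$, so $I_\lambda^\infty(w^k)\ge c_\lambda^\infty$, and $c_\lambda^\infty>0$ by Lemma~\ref{lem3-2}(3) applied to $I_\lambda^\infty$. Hence if $l\ge1$ then $c_\lambda\ge c_\lambda^\infty$.

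The crucial point, and the step I expect to cost the most work, is the strict comparison $c_\lambda<c_\lambda^\infty$ for all $\lambda\in\Lambda$, which contradicts the inequality just obtained and therefore forces $l=0$. To prove it, let $u_\lambda^\infty$ be the ground state of the limit problem \eqref{equ4-1} given by Theorem~\ref{thm1-3}; arguing as at the end of the proof of Theorem~\ref{thm1-3} and invoking the strong maximum principle, we may take $u_\lambda^\infty>0$ a.e. Writing $(u_\lambda^\infty)_\theta=\theta^{2s}u_\lambda^\infty(\theta\cdot)$, for every $\theta>0$ assumption $(V_2)$ yields
\begin{equation*}
I_{V,\lambda}\big((u_\lambda^\infty)_\theta\big)=I_\lambda^\infty\big((u_\lambda^\infty)_\theta\big)-\frac12\int_{\mathbb{R}^3}\big(V_\infty-V(x)\big)\big((u_\lambda^\infty)_\theta\big)^2\,{\rm d}x<I_\lambda^\infty\big((u_\lambda^\infty)_\theta\big),
\end{equation*}
the inequality being strict because $V_\infty-V\ge0$ is positive on a set of positive measure and $u_\lambda^\infty>0$. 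The function $\theta\mapsto I_{V,\lambda}((u_\lambda^\infty)_\theta)$ vanishes at $\theta=0$, is positive for $\theta$ small and tends to $-\infty$ as $\theta\to+\infty$ (as in Lemma~\ref{lem3-1}), hence attains its maximum at some $\theta_\ast>0$; on the other hand, Lemma~\ref{lem3-2}(1) applied to $I_\lambda^\infty$ together with $u_\lambda^\infty\in\mathcal{M}_\lambda^\infty$ gives $\max_{\theta\ge0}I_\lambda^\infty((u_\lambda^\infty)_\theta)=I_\lambda^\infty(u_\lambda^\infty)=c_\lambda^\infty$. Choosing a path in $\Gamma$ that runs along the ray $\theta\mapsto(u_\lambda^\infty)_\theta$ out to a level where $I_{V,\lambda}<0$ and is then deformed, inside the sublevel set $\{I_{V,\lambda}<0\}$, to the fixed endpoint $v_0$, we conclude
\begin{equation*}
c_\lambda\le\max_{\theta\ge0}I_{V,\lambda}\big((u_\lambda^\infty)_\theta\big)=I_{V,\lambda}\big((u_\lambda^\infty)_{\theta_\ast}\big)<I_\lambda^\infty\big((u_\lambda^\infty)_{\theta_\ast}\big)\le c_\lambda^\infty.
\end{equation*}
Verifying this path/sublevel-set step (that a far point of the scaling ray can be joined to $v_0$ inside $\{I_{V,\lambda}<0\}$) and the positivity of $u_\lambda^\infty$ are the technical heart of the argument.

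Once $l=0$ is known, Lemma~\ref{lem4-2}(iv) gives $\|u_n-u_0\|\to0$, so $u_n\to u_0$ strongly in $H^s(\mathbb{R}^3)$; since $I_{V,\lambda}\in C^1$ (using Lemma~\ref{lem2-1}(6) for the nonlocal term) it follows that $I_{V,\lambda}(u_0)=\lim_n I_{V,\lambda}(u_n)=c_\lambda$, while $I_{V,\lambda}'(u_0)=0$ by Step~1 of Lemma~\ref{lem4-2}. Because $c_\lambda>\max\{I_{V,\lambda}(0),I_{V,\lambda}(v_0)\}\ge0$ by Lemma~\ref{lem4-1}(ii), we must have $u_0\ne0$. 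Thus $u_\lambda:=u_0$ is a nontrivial critical point of $I_{V,\lambda}$ with $I_{V,\lambda}(u_\lambda)=c_\lambda$, and since the argument treats an arbitrary fixed $\lambda\in\Lambda$, the lemma follows.
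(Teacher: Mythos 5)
Your proposal is correct and follows essentially the same route as the paper: establish the strict comparison $c_{\lambda}<c_{\lambda}^{\infty}$ via the scaling ray through the limit ground state together with $(V_2)$ (the paper's inequality \eqref{equ4-18}), then apply Lemma~\ref{lem4-2} and use $I_{V,\lambda}(u_0)\geq0$ and $I_{\lambda}^{\infty}(w^k)\geq c_{\lambda}^{\infty}$ to rule out $l\geq1$, and finally get nontriviality from $c_{\lambda}>0$. The two technical points you flag (positivity of $u_{\lambda}^{\infty}$ to make the comparison strict, and admissibility of the ray-based path with respect to the fixed endpoint $v_0$ in $\Gamma$) are genuine refinements of details the paper passes over silently in \eqref{equ4-18}, and handling them as you indicate is the standard way to make that step rigorous.
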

\begin{proof}
For any $\lambda\in\Lambda$ fixed. Let $u_{\lambda}^{\infty}$ be the minimizer of $c_{\lambda}^{\infty}$, by Lemma \ref{lem3-2}, we have that $I_{\lambda}^{\infty}(u_{\lambda}^{\infty})=\max_{\theta\geq0}I_{\lambda}^{\infty}(\theta^{2s}u_{\lambda}^{\infty}(\theta x))$. Then choosing $v(x)=\theta^{2s}u_{\lambda}^{\infty}(\theta x)$ for $\theta$ large enough in Lemma \ref{lem4-1}, by $(V_2)$, we have that
\begin{equation}\label{equ4-18}
c_{\lambda}\leq\max_{\theta\geq0}I_{V,\lambda}(\theta^{2s}u_{\lambda}^{\infty}(\theta x))<\max_{\theta\geq0}I_{\lambda}^{\infty}(\theta^{2s}u_{\lambda}^{\infty}(\theta x))=I_{\lambda}^{\infty}(u_{\lambda}^{\infty})=c_{\lambda}^{\infty}.
\end{equation}
By Lemma \ref{lem4-2}, there exists $l\in\mathbb{N}\cup\{0\}$ and $\{y_n^k\}\subset\mathbb{R}^3$ with $|y_n^k|\rightarrow+\infty$ for each $1\leq k\leq l$, and $u_{\lambda}\in H^s(\mathbb{R}^3)$, $w^k\in H^s(\mathbb{R}^3)$ such that
\begin{equation*}
I_{V,\lambda}'(u_{\lambda})=0,\,\, u_n\rightharpoonup u_{\lambda},\,\, I_{V,\lambda}(u_n)\rightarrow I_{V,,\lambda}(u_{\lambda})+\sum_{k=1}^lI_{\lambda}^{\infty}(w^k),
\end{equation*}
where $w^j$ is critical point of $I_{\lambda}^{\infty}$ for $1\leq k\leq l$. Set
\begin{equation*}
a_{\lambda}=\int_{\mathbb{R}^3}|(-\Delta)^{\frac{s}{2}}u_{\lambda}|^2\,{\rm d}x,\,\, b_{\lambda}=\int_{\mathbb{R}^3}V(x)|u_{\lambda}|^2\,{\rm d}x,\,\,c_{\lambda}=\int_{\mathbb{R}^3}\phi_{u_{\lambda}}^su_{\lambda}^2\,{\rm d}x
\end{equation*}
\begin{equation*}
d_{\lambda}=\frac{\lambda}{p+1}\int_{\mathbb{R}^3}|u_{\lambda}|^{p+1}\,{\rm d}x,\,\, e_{\lambda}=\int_{\mathbb{R}^3}(x,\nabla V(x))|u_{\lambda}|^2\,{\rm d}x.
\end{equation*}
Then, we have that
\begin{equation*}
\left\{
  \begin{array}{ll}
    \frac{1}{2}a_{\lambda}+\frac{1}{2}b_{\lambda}+\frac{1}{4}c_{\lambda}-d_{\lambda}=I_{V,\lambda}(u_{\lambda}), & \hbox{} \\
    a_{\lambda}+b_{\lambda}+c_{\lambda}-(p+1)d_{\lambda}=0, & \hbox{} \\
    \frac{3-2s}{2}a_{\lambda}+\frac{3}{2}b_{\lambda}+\frac{3+2s}{4}c_{\lambda}-3d_{\lambda}+\frac{1}{2}e_{\lambda}=0. & \hbox{}
  \end{array}
\right.
\end{equation*}
Similarly to the arguments about \eqref{equ4-3}, we get
\begin{equation*}
(6s-3)I_{V,\lambda}(u_{\lambda})=\frac{1}{2}(2sb_{\lambda}+e_{\lambda})+2s(p-2)d_{\lambda}\geq0.
\end{equation*}
Thus, if $l\neq0$, we have
\begin{equation*}
c_{\lambda}=\lim_{n\rightarrow\infty}I_{V,\lambda}(u_n)=I_{V,\lambda}(u_{\lambda})+\sum_{k=1}^lI_{\lambda}^{\infty}(w^k)\geq c_{\lambda}^{\infty}
\end{equation*}
which contradicts with \eqref{equ4-18}. Hence, $l\equiv0$, from Lemma \ref{lem4-2}, we see that $u_n\rightarrow u_{\lambda}$ and $c_{\lambda}=I_{V,\lambda}(u_{\lambda})$.

\end{proof}
{\bf Proof of Theorem \ref{thm1-1}.}
From Lemma \ref{lem4-1}, it follows that for a.e. $\lambda\in\Lambda$, there exists a nontrivial critical point $u_{\lambda}\in H^s(\mathbb{R}^3)$ for $I_{V,\lambda}$ and $I_{V,\lambda}=c_{\lambda}$. We choose a sequence $\lambda_n\in[\delta,1]$ satisfying $\lambda_n\rightarrow1$, then there has a sequence of nontrivial critical points $\{u_{\lambda_n}\}$ of $I_{V,\lambda_n}$ and $I_{V,\lambda_n}(u_{\lambda_n})=c_{\lambda_n}$. Next we shall prove that $\{u_{\lambda_n}\}$ is bounded in $H^s(\mathbb{R}^3)$. Set
\begin{equation*}
a_{\lambda_n}=\int_{\mathbb{R}^3}|(-\Delta)^{\frac{s}{2}}u_{\lambda_n}|^2\,{\rm d}x,\,\, b_{\lambda_n}=\int_{\mathbb{R}^3}V(x)|u_{\lambda_n}|^2\,{\rm d}x,\,\,c_{\lambda_n}=\int_{\mathbb{R}^3}\phi_{u_{\lambda_n}}^su_{\lambda_n}^2\,{\rm d}x
\end{equation*}
\begin{equation*}
d_{\lambda_n}=\frac{\lambda_n}{p+1}\int_{\mathbb{R}^3}|u_{\lambda_n}|^{p+1}\,{\rm d}x,\,\, e_{\lambda_n}=\int_{\mathbb{R}^3}(x,\nabla V(x))|u_{\lambda_n}|^2\,{\rm d}x.
\end{equation*}
Then, we have that
\begin{equation}\label{equ4-19}
\left\{
  \begin{array}{ll}
    \frac{1}{2}a_{\lambda_n}+\frac{1}{2}b_{\lambda_n}+\frac{1}{4}c_{\lambda_n}-d_{\lambda_n}=I_{V,\lambda_n}(u_{\lambda_n}), & \hbox{} \\
    a_{\lambda_n}+b_{\lambda_n}+c_{\lambda_n}-(p+1)d_{\lambda_n}=0, & \hbox{} \\
    \frac{3-2s}{2}a_{\lambda_n}+\frac{3}{2}b_{\lambda_n}+\frac{3+2s}{4}c_{\lambda_n}-3d_{\lambda_n}+\frac{1}{2}e_{\lambda_n}=0. & \hbox{}
  \end{array}
\right.
\end{equation}
Similarly to the arguments about \eqref{equ4-3}, we get
\begin{equation*}
\frac{1}{2}(2sb_{\lambda_n}+e_{\lambda_n})+2s(p-2)d_{\lambda_n}=(6s-3)I_{V,\lambda_n}(u_{\lambda_n})\leq(6s-3)c_{\delta}
\end{equation*}
and
\begin{equation*}
\frac{1}{4}(a_{\lambda_n}+b_{\lambda_n})-(1-\frac{4}{p+1})d_{\lambda_n}=c_{\lambda_n}\leq c_{\delta}.
\end{equation*}
In view of $(V_1)$, we can deduce that $a_{\lambda_n}+b_{\lambda_n}$ is bounded, that is, $\{u_{\lambda_n}\}$ is bounded in $H^s(\mathbb{R}^3)$. Therefore, using the fact that the map $\lambda\rightarrow c_{\lambda}$ is left continuous, we have that
\begin{equation*}
\lim_{n\rightarrow\infty}I_{V,1}(u_{\lambda_n})=\lim_{n\rightarrow\infty}\Big(I_{V,\lambda_n}(u_{\lambda_n})+(\lambda_n-1)\int_{\mathbb{R}^3}\frac{1}{p+1}|u_{\lambda_n}|^{p+1}\,{\rm d }x\Big)=\lim_{n\rightarrow\infty}c_{\lambda_n}=c_1
\end{equation*}
and
\begin{equation*}
\lim_{n\rightarrow\infty}\langle I_{V,1}'(u_{\lambda_n}),\varphi\rangle=\lim_{n\rightarrow\infty}\Big(\langle I_{V,\lambda_n}'(u_{\lambda_n}),\varphi\rangle+(\lambda_n-1)\int_{\mathbb{R}^3}|u_{\lambda_n}|^{p-1}u_{\lambda_n}\varphi\,{\rm d }x=0.
\end{equation*}
These show that $\{u_{\lambda_n}\}$ is a bounded $(PS)_{c_1}$ sequence for $I_{V}:=I_{V,1}$. Then by Lemma \ref{lem4-3}, there exists a nontrivial critical point $u\in H^s(\mathbb{R}^3)$ for $I_{V}$ and $I_{V}(u)=c_1$.

Finally, we prove the existence of a ground state solution for problem \eqref{main}. Set
\begin{equation*}
m=\inf\{I_{V}(u)\,\,\Big|\,\, u\neq0,\,\, I_{V}'(u)=0\}.
\end{equation*}
Then $0\leq m\leq I_{V}(u)=c_1<+\infty$. We rule out the case $m=0$. By contradiction, let $\{u_n\}$ be a $(PS)_0$ sequence for $I_{V}$. Hence,
\begin{equation*}
0=\langle I_{V}'(u_n),u_n\rangle\geq\frac{1}{2}\|u_n\|^2-\frac{1}{p+1}\int_{\mathbb{R}^3}|u_n|^{p+1}\,{\rm d}x
\end{equation*}
which implies that
\begin{equation}\label{equ4-20}
\|u_n\|\geq C>0 \quad \text{for all}\,\, n\in\mathbb{N}.
\end{equation}
Since $I_{V}'(u_n)=0$ for any $n\in\mathbb{N}$, by Proposition \ref{pro2-1} and $(V_1)$, we have
\begin{align*}
(6s-3)I_{V}(u_n)&=(6s-3)I_{V}(u_n)-\Big(2s\langle I_{V}'(u_n),u_n\rangle-\mathcal{P}_{V}(u_n)\Big)\\
&=\frac{2s(p-2)}{p+1}\int_{\mathbb{R}^3}|u_n|^{p+1}\,{\rm d}x+\frac{1}{2}\int_{\mathbb{R}^3}(2sV(x)+(x,\nabla V(x))u_n^2\,{\rm d}x\\
&\geq\frac{2s(p-2)}{p+1}\int_{\mathbb{R}^3}|u_n|^{p+1}\,{\rm d}x,
\end{align*}
where $\mathcal{P}_V$ is defined by
\begin{align*}
\mathcal{P}_{V}(u)&=\frac{3-2s}{2}\int_{\mathbb{R}^3}|(-\Delta)^{\frac{s}{2}}u|^2\,{\rm d}x+\frac{3+2s}{4}\int_{\mathbb{R}^3}\phi u^2\,{\rm d}x+\frac{3}{2}\int_{\mathbb{R}^3}V(x)u^2\,{\rm d}x\\
&+\frac{1}{2}\int_{\mathbb{R}^3}u^2(x\cdot\nabla V(x))\,{\rm d}x-\frac{3}{p+1}\int_{\mathbb{R}^3}|u|^{p+1}\,{\rm d}x.
\end{align*}
Therefore, $\lim\limits_{n\rightarrow\infty}\|u_n\|_{p+1}=0$. Combining with $\langle I_{V}'(u_n),u_n\rangle=0$, it is easy to verify that $\lim\limits_{n\rightarrow\infty}\|u_n\|=0$. This contradicts with \eqref{equ4-20}.

In order to complete our proof, it suffices to prove $I_{V}$ can be achieved in $H^s(\mathbb{R}^3)$. Let $\{u_n\}$ be a sequence of nontrivial critical points of $I_{V}$ satisfying $I_{V}'(u_n)=0$ and $I_{V}(u_n)\rightarrow m$. Since $I_{V}(u_n)$ is bounded, by the similar arguments as \eqref{equ4-19}, we can conclude that $\{u_n\}$ is bounded in $H^s(\mathbb{R}^3)$ and as a result, $\{u_n\}$ is a $(PS)_m$ sequence of $I_{V}$. Similar arguments in Lemma \ref{lem4-3}, there exists a nontrivial $u\in H^s(\mathbb{R}^3)$ such that $I_{V}(u)=m$.

\section{Nonexistence results}
In this section, under the assumptions $(V_1)$-$(V_2)$ on the potential $V(x)$, we will establish some nonexistence results for system \eqref{main}, for the case $1\leq p\leq2$ and $p=2_s^{\ast}-1$ ,respectively.

{\bf Proof of Theorem \ref{thm1-4}.}
Suppose that $(u,\phi)\in H^s(\mathbb{R}^3)\times\mathcal{D}^{s,2}(\mathbb{R}^3)$ is a solution of \eqref{main-5}. Multiplying the first equation by $u$ and integrate over $\mathbb{R}^3$, we have that
\begin{equation}\label{equ5-1}
\int_{\mathbb{R}^3}|(-\Delta)^{\frac{s}{2}}u|^2\,{\rm d}x+\int_{\mathbb{R}^3}u^2\,{\rm d}x+\int_{\mathbb{R}^3}\phi u^2\,{\rm d}x=\int_{\mathbb{R}^3}|u|^{p+1}\,{\rm d}x.
\end{equation}
Multiplying the second equation of \eqref{main-5} by $\phi$ and integrate over $\mathbb{R}^3$, from Plancherel Theorem and $\phi=\overline{\phi}$, we get
\begin{align}\label{equ5-2}
\int_{\mathbb{R}^3}\phi u^2\,{\rm d}x&=\int_{\mathbb{R}^3}(-\Delta)^s\phi \phi\,{\rm d}x=\int_{\mathbb{R}^3}\mathcal{F}^{-1}(|\xi|^{2s}\widehat{\phi})\overline{\phi}\,{\rm d}x=\int_{\mathbb{R}^3}(|\xi|^{2s}\widehat{\phi})\overline{\widehat{\phi}}\,{\rm d}\xi\nonumber\\
&=\int_{\mathbb{R}^3}|(-\Delta)^{\frac{s}{2}}\phi|^2\,{\rm d}x.
\end{align}
Similarly, multiplying the second equation of \eqref{main-5} by $|u|$, we deduce that
\begin{align}\label{equ5-3}
\int_{\mathbb{R}^3}|u|^3\,{\rm d}x&=\int_{\mathbb{R}^3}(-\Delta)^s\phi |u|\,{\rm d}x=\int_{\mathbb{R}^3}|\xi|^{2s}\widehat{\phi}\overline{\widehat{|u|}}\,{\rm d}\xi\nonumber\\
&=\int_{\mathbb{R}^3}(\sqrt{2}|\xi|^{s}\overline{\widehat{|u|}})(\frac{1}{\sqrt{2}}|\xi|^s\widehat{\phi})\,{\rm d}\xi\nonumber\\
&=\frac{1}{2}\Big(\int_{\mathbb{R}^3}(\sqrt{2}|\xi|^{s}\overline{\widehat{|u|}})(\frac{1}{\sqrt{2}}|\xi|^s\widehat{\phi})+(\sqrt{2}|\xi|^{s}\widehat{|u|})(\frac{1}{\sqrt{2}}|\xi|^s\overline{\widehat{\phi}})\,{\rm d}\xi\Big)\nonumber\\
&\leq\frac{1}{2}\Big(\int_{\mathbb{R}^3}(\sqrt{2}|\xi|^{s}\overline{\widehat{|u|}})^2\,{\rm d}\xi+\int_{\mathbb{R}^3}(\frac{1}{\sqrt{2}}|\xi|^{s}\widehat{\phi})^2\,{\rm d}\xi\Big)\nonumber\\
&=\int_{\mathbb{R}^3}|(\Delta)^{\frac{s}{2}}u|^2\,{\rm d}x+\frac{1}{4}\int_{\mathbb{R}^3}|(\Delta)^{\frac{s}{2}}\phi|^2\,{\rm d}x,
\end{align}
where we have used the element fact that $z_1\overline{z_2}+\overline{z_1}z_2\leq|z_1|^2+|z_2|^2$ if $z_1\overline{z_2}=\overline{z_1}z_2$ for any $z_1,z_2\in \mathbb{C}$.

By \eqref{equ5-1}-\eqref{equ5-3}, we deduce that
\begin{align*}
0&=\int_{\mathbb{R}^3}|(-\Delta)^{\frac{s}{2}}u|^2\,{\rm d}x+\int_{\mathbb{R}^3}u^2\,{\rm d}x+\lambda\int_{\mathbb{R}^3}\phi u^2\,{\rm d}x-\int_{\mathbb{R}^3}|u|^{p+1}\,{\rm d}x\\
&\geq\int_{\mathbb{R}^3}|(-\Delta)^{\frac{s}{2}}u|^2\,{\rm d}x+\int_{\mathbb{R}^3}u^2\,{\rm d}x+\frac{1}{4}\int_{\mathbb{R}^3}|(-\Delta)^{\frac{s}{2}}\phi|^2\,{\rm d}x-\int_{\mathbb{R}^3}|u|^{p+1}\,{\rm d}x\\
&\geq\int_{\mathbb{R}^3}(|u|^3+u^2-|u|^{p+1})\,{\rm d}x.
\end{align*}
Since the function $t^2+t^3-t^{p+1}$ for $1<p\leq2$ is nonnegative for all $t\geq0$ and equal to zero only if $t=0$. Hence $u\equiv0$. The proof of Theorem \ref{thm1-4} is completed.

{\bf Proof of Theorem \ref{thm1-5}.}
We assume that $(u,\phi)\in H^s(\mathbb{R}^3)\times\mathcal{D}^{s,2}(\mathbb{R}^3)$ is a solution of the problem \eqref{main}, then $(u,\phi)$stisfies the following Pohozaev identity:
\begin{align*}
&(3-2s)\int_{\mathbb{R}^3}|(-\Delta)^{\frac{s}{2}}u|^2\,{\rm d}x+3\int_{\mathbb{R}^3}V(x)u^2\,{\rm d}x+\frac{3+2s}{2}\int_{\mathbb{R}^3}\phi u^2\,{\rm d}x\\
&+\int_{\mathbb{R}^3}(x,\nabla V(x))u^2\,{\rm d}x=\frac{6}{2_s^{\ast}}\int_{\mathbb{R}^3}|u|^{2_s^{\ast}}\,{\rm d}x.
\end{align*}
Multiplying the first equation of \eqref{main} by $u$ and integrating over $\mathbb{R}^3$, we get
\begin{equation*}
\int_{\mathbb{R}^3}|(-\Delta)^{\frac{s}{2}}u|^2\,{\rm d}x+\int_{\mathbb{R}^3}V(x)u^2\,{\rm d}x+\int_{\mathbb{R}^3}\phi u^2\,{\rm d}x=\int_{\mathbb{R}^3}|u|^{2_s^{\ast}}\,{\rm d}x.
\end{equation*}
Therefore, from the above two equations, we deduce that
\begin{align*}
\frac{1}{3-2s}\int_{\mathbb{R}^3}\Big(2sV(x)+(x,\nabla V(x))\Big)u^2\,{\rm d}x+\frac{6s-3}{2(3-2s)}\int_{\mathbb{R}^3}\phi u^2\,{\rm d}x=0.
\end{align*}
By the second equation in \eqref{main}, we have
\begin{equation*}
\int_{\mathbb{R}^3}|(\Delta)^{\frac{s}{2}}\phi|^2\,{\rm d}x=\int_{\mathbb{R}^3}\phi u^2\,{\rm d}x,
\end{equation*}
combining with the above equation, by $(V_1)-(V_2)$, we deduce that
\begin{equation*}
\int_{\mathbb{R}^3}\Big(2sV(x)+(x,\nabla V(x))\Big)u^2\,{\rm d}x=0,\quad \int_{\mathbb{R}^3}\phi u^2\,{\rm d}x=\int_{\mathbb{R}^3}|(\Delta)^{\frac{s}{2}}\phi|^2\,{\rm d}x=0
\end{equation*}
which implies that $u=\phi=0$. This complete the proof of Theorem \ref{thm1-5}.

\section*{Acknowledgments}

This work is supported by NSFC grant 11501403, and by the Shanxi Province Science Foundation for Youths under grant 2013021001-3.



%

\end{document}